\pgfplotsset{compat=newest}
\numberwithin{equation}{section}
\newcommand{\be}{\begin{equation}}
\newcommand{\ee}{\end{equation}}
\newcommand{\HT}{{\mathcal{H}}_T}
\newcommand{\calD}{{\mathcal D}}
\newcommand{\bmp}{{\boldsymbol{p}}}
\newcommand{\muhp}{\mu_{\mathrm{hp}}}
\newtheorem{theorem}{Theorem}[section]
\newtheorem{definition}[theorem]{Definition}
\newtheorem{lemma}[theorem]{Lemma}
\newtheorem{proposition}[theorem]{Proposition}
\newtheorem{remark}[theorem]{Remark}
\newcommand{\cA}{\mathcal A}
\newcommand{\cG}{\mathcal G}
\newcommand{\cK}{\mathcal K}
\newcommand{\cL}{\mathcal L}
\newcommand{\cT}{\mathcal T}
\newcommand{\cW}{\mathcal W}
\newcommand{\IN}{\mathbb N}
\newcommand{\IP}{\mathbb P}
\newcommand{\IR}{\mathbb R}
\newcommand{\domain}{\mathrm{D}}
\newcommand{\semigroup}{E}
\newcommand{\abs}[1]{\left\lvert{#1}\right\rvert}
\newcommand{\norm}[1]{{\left\lVert{#1}\right\rVert}}
\newcommand{\spf}[2]{{\left\langle{#1},{#2}\right\rangle}}
\newcommand{\normiii}[1]{{\left\vert\kern-0.25ex\left\vert\kern-0.25ex\left\vert #1 
    \right\vert\kern-0.25ex\right\vert\kern-0.25ex\right\vert}}
\newcommand{\s}{r}
\newcommand{\rr}{q}
\DeclareMathOperator*{\essinf}{ess\,inf}
\newcommand{\dtime}{\delta}
\title[Space-Time $hp$]{
Exponential Convergence of $hp$-Time-Stepping
\\
in Space-Time Discretizations of Parabolic PDEs
}
\author{Ilaria Perugia}
\address{\mbox{Faculty of Mathematics, University of Vienna, Vienna, Austria}}
\email{{ilaria.perugia,marco.zank}@univie.ac.at}
\thanks{
This work was performed in autumn 2021 when ChS visited
Uni Vienna while on sabbatical leave from ETH, and completed
at the
2021 S\"ollerhaus Workshop in October 2021 in Hirschegg, Austria.
Excellent working conditions there are acknowledged.
\\
IP has been funded by the Austrian Science Fund (FWF)
through project F~65 ``Taming Complexity in Partial Differential
Systems'' and project P~33477-N}
\author{Christoph Schwab}
\address{\mbox{Seminar for Applied Mathematics, ETH Z\"urich, 8092 Z\"urich, Switzerland}}
\email{schwab@math.ethz.ch}
\author{Marco Zank}
\subjclass[2010]{Primary 65N30, 65J15}
\date{}
\dedicatory{}
\keywords{Parabolic IBVP, Space-Time Methods, $hp$-FEM, Exponential Convergence}
\begin{document}

\begin{abstract}
For linear parabolic initial-boundary value problems with
    self-ad\-joint, time-homogeneous elliptic spatial
    operator in divergence form with Lip\-schitz-continuous coefficients,
    and for incompatible, time-analytic forcing term in polygonal/polyhedral
    domains $\domain$, we prove time-analyticity of solutions.
    Temporal analyticity is quantified in terms of weighted, analytic function classes,
    for data with finite, low spatial regularity and without boundary compatibility.
    Leveraging this result, we prove exponential convergence 
    of a conforming, semi-discrete $hp$-time-stepping approach. 
    We combine this semi-discretization in time 
    with first-order, so-called ``$h$-version'' Lagrangian Finite Elements 
    with corner-refinements in space into a tensor-product, 
    conforming discretization of a space-time formulation. 
    We prove that, under appropriate corner-
    and corner-edge mesh-refinement of~$\domain$, 
    error vs. number of degrees of freedom 
    in space-time behaves essentially (up to logarithmic terms), 
to what standard FEM provide for one elliptic boundary value problem solve 
in $\domain$. 
We focus on two-dimensional spatial domains and comment on
the one- and the three-dimensional case.
\end{abstract}
AMS Subject Classification: primary 65N30

\maketitle 

\section{Introduction} \label{sec:intro}
\label{sec:Intro}
Efficient numerical solution of parabolic evolution problems is required
in many applications. In addition to the plain numerical solution of 
associated initial-boundary value problems, in recent years the efficient
numerical treatment of optimal control problems and of uncertain input data
has been considered. 
Here, often a large number of  cases needs to be treated, and the 
(numerical) solution must be stored in a data-compressed format.
Rather than the (trivial) option of a~posteriori compressing a 
numerical solution obtained by a standard scheme, 
novel algorithms have emerged featuring some form of 
\emph{space-time compressibility} in the numerical solution process. 
I.e., the numerical scheme will obtain directly, at runtime, a 
numerical solution in a compressed format. As examples,
we mention only sparse-grid and wavelet-based methods  (e.g., \cite{GrOeltz}),
and wavelet-based compressive schemes (e.g., \cite{AKChS,ScSt} and the references there).
Key to successful compressive space-time discretizations is an
appropriate \emph{variational formulation} of the evolution problem under consideration.
Accordingly, recent years have seen the development of a variety of,
in general nonequivalent, space-time variational formulations of parabolic initial-boundary value
problems. 
Departing from the classical, Bochner-space perspective used to establish well-posedness,
the novel formulations adopt the perspective of treating the parabolic evolution problem
as an operator equation between appropriate function spaces,
the primary motivation being accomodation of efficient, compressive space-time
numerical schemes. 
We mention only~\cite{ScSt,Andr,Ost,LMN_2016,CDG_2017,OStMZ,MMST_2020,FueKar,GGRSt,SW_2021} and the references there. A comprehensive account of the numerical analysis of 
\emph{fixed order time-discretizations} is provided in~\cite{ThomeeBk2nd} and the references there. 
In the results given in that volume, the semigroup perspective is adopted,
and the mathematical setting is based on homogeneous Sobolev spaces $\dot{H}^s(\domain)$,
which impose implicit boundary compatibilities of regular data, see \cite[Chapter~19]{ThomeeBk2nd}.

The presently investigated time-discretization approach 
is based on the space-time variational formulation in \cite{OStMZ}. 
It is of Petrov--Galerkin type, and is based on a fractional order Sobolev
space in the temporal direction. 
It has been proposed and developed in a series of 
papers~\cite{OStMZ, SteinbachZankNoteHT, ZankExactRealizationHT, langer2020efficient, SteinbachMissoni2022}.
We briefly recapitulate it here, and refer to \cite{OStMZ}
for full development of details. 
The compressive aspect is here realized 
by the $hp$-time discretization for this formulation.

Throughout, we denote by $\domain\subset \IR^d$ a bounded 
interval (if $d=1$), or a bounded
polygonal (if $d=2$) or polyhedral (if $d=3$) domain, with 
a Lipschitz boundary $\Gamma = \partial\domain$ consisting of a 
finite number of plane faces, and by $T>0$ a finite time horizon.
In the space-time cylinder $Q=(0,T)\times \domain$, 
we consider the parabolic initial-boundary value problem
(IBVP for short) governed by the partial differential equation
\be\label{eq:IBVP}
Bu := \partial_t u + A(\partial_x)u = g \quad \mbox{in}\quad (0,T)\times \domain.
\ee
Here, the forcing function $g:Q\to \IR$ is assumed to belong to 
$\cA([0,T];L^2(\domain))$, i.e., it is analytic as a map from $[0,T]$ into $L^2(\domain)$.
The spatial differential operator $A(\partial_x)$
is assumed linear, self-adjoint, in divergence form, 
i.e.,
\[
A(\partial_x) = -\nabla_x \cdot(A(x) \nabla_x) 
\]
with $A\in L^\infty(\domain;\IR^{d\times d})$ 
being a symmetric, 
positive definite matrix function of $x\in \domain$ 
which does not depend on the temporal variable $t$.
The PDE \eqref{eq:IBVP} is completed by initial condition
\be\label{eq:IC}
u|_{t=0} = u_0\;,
\ee
and by mixed boundary conditions
\be\label{eq:BC}
\gamma_0(u) = u_D \quad \mbox{on}\quad \Gamma_D\;,
\quad 
\gamma_1(u) = u_N \quad \mbox{on}\quad \Gamma_N\;.
\ee
Here, $\Gamma_D$ and $\Gamma_N$ denote a partitioning
of $\Gamma = \partial \domain$ into a Dirichlet and a Neumann part,
$\gamma_0$
denotes the Dirichlet trace map, 
and 
$\gamma_1$ denotes the conormal trace operator, given (in strong form)
by $\gamma_1(v) = n_x \cdot (A(x) \nabla_x v)|_\Gamma$, 
with $\Gamma = \partial\domain$ denoting the 
boundary of $\domain$,
and $n_x \in L^\infty(\Gamma;\IR^d)$ the exterior unit normal vector
field on~$\Gamma$.
\begin{remark}\label{rmk:HomBC}
In the rest of this paper, the results are formulated for $u_0=0$, $u_D = 0$,
and $u_N = 0$. Since the IBVP~\eqref{eq:IBVP}--\eqref{eq:BC} is linear, 
superposition for a sufficiently regular function $U(x,t)$ in $Q$, 
which satisfies~\eqref{eq:IC} and~\eqref{eq:BC}, will imply 
that the function $u-U$ will solve~\eqref{eq:IBVP}--\eqref{eq:BC} with
$g-BU$ in place of $g$ in~\eqref{eq:IBVP}, and with homogeneous
initial and boundary data in~\eqref{eq:IC} and~\eqref{eq:BC}.
All regularity hypotheses which we will impose
below on the source term $g$ in \eqref{eq:IBVP} 
(in particular, time-analyticity \eqref{eq:TimeReg}) 
entail via $U$ corresponding assumptions on $u_0$, $u_D$, and $u_N$.
\end{remark}

Exploiting the analytic semigroup property of the parabolic evolution operator,
we provide in Section \ref{sec:tReg} 
sufficient conditions for the time analyticity of solutions when considered
as maps from the time interval $[0,T]$ into a 
suitable Sobolev space $W\subset L^2(\domain)$
on the bounded spatial domain $\domain \subset \IR^d$.

\emph{Contributions of the present paper} are a weighted analytic, 
temporal regularity analysis
based on the analytic semigroup theory for linear, parabolic evolution equations, 
for source terms and coefficients of finite spatial regularity,
and the proof of exponential convergence of a temporal $hp$-discretization.
For polygonal spatial domain $\domain\subset \IR^2$, and for data without 
boundary compatibility, we establish 
\emph{a~priori} convergence rate bounds for 
fully discrete, space-time approximations 
which are based on a fractional order space-time formulation,
on $hp$-time-stepping and on $h$-FEM with 
corner-refined, regular graded triangulations in~$\domain$.
The diffusion coefficient $A(x)$ is assumed to be independent of $t$, and 
to belong to $W^{1,\infty}(\domain; \IR^{2\times 2})$. 
We comment on the cases $d=1$ (when $\domain$ is a bounded interval)
and $d=3$ (when $\domain$ is a polyhedron).

The layout of this paper is as follows:
In Section~\ref{sec:FctSpcxtForm}, we introduce notation and function spaces
of tensor product and of Bochner type, which will be used in the following.
We also provide the space-time variational formulation in fractional order
spaces and the subspaces used in discretization.
Section~\ref{sec:Reg} addresses the solution regularity, with particular attention
to temporal analytic regularity in weighted, analytic Bochner spaces
of functions taking values in corner-weighted, Kondrat'ev type spaces on 
the domain $\domain$. Section~\ref{sec:Approx} then introduces the 
Galerkin approximations in space and time that will be used,
and their approximation properties.
Section~\ref{sec:ConvRate} contains the main results on the convergence 
rate of the discretization. Section~\ref{sec:NumExp} describes the 
numerical realization of the nonlocal temporal bilinear form, and
reports numerical results which are in full agreement with the 
convergence rate analysis.

We use standard notation: 
$\IN = \{1,2,\dots\}$ 
shall denote the natural numbers, and 
$\IN_0:=\IN\cup \{0\}$. 
For Banach spaces $X$ and $Y$, $\cL(X,Y)$ denotes the space of bounded
linear operators from $X$ to $Y$, and $X':=\cL(X,\IR)$ denotes the dual of $X$. 
For $q\in [1,\infty]$,
the usual notation $L^q(\domain)$ is adopted 
for Lebesgue spaces of $q$-integrable functions $u:\domain\to\IR$ over some (bounded)
domain $\domain$ in the Euclidean space $\IR^d$. 
For nonnegative integers~$k$, Hilbertian 
Sobolev spaces (where $q=2$) on such domains $\domain$ are denoted by
$H^k(\domain)$. 
For $k=0$, as usual, $H^0(\domain) = L^2(\domain)$. 
Hilbertian Sobolev spaces of noninteger order $s = k + \theta$
for $k\in \IN_0$ and $0<\theta<1$ are defined by interpolation 
(real method, with fine index $2$).

\section{Function Spaces and Space-time Variational Formulation}
\label{sec:FctSpcxtForm}
We introduce several Bochner-type Sobolev spaces 
in the space-time cylinder $Q:=J\times \domain$, with the finite time
interval $J := (0,T)$ and the bounded spatial domain $\domain\subset \IR^d$.

\subsection{Function Spaces}
\label{sec:FctSpc}
Bochner-type function spaces defined on the space-time cylinder
$Q=J\times \domain$ 
are spaces of strongly measurable maps $u \colon \, J \to H^l(\domain)$,
such that $u\in H^k(J;H^l(\domain))$ for nonnegative integers $k$,$l$.
Due to the Hilbertian structure of $H^k$, these separable Hilbert spaces 
admit tensor product structure, i.e.,
\[
H^k(J;H^l(\domain))
\simeq 
H^k(J)\otimes H^l(\domain)
\simeq 
H^l(\domain;H^k(J))\;,
\]
where 
$\simeq$ denotes (isometric) isomorphism and 
$\otimes$ the Hilbertian tensor product.

For any integer $k\geq 1$, we denote by $H^k_0$ the closed subspace of $H^k$
of functions with homogeneous boundary values in the sense of closure of
$C^\infty_0$ with respect to the norm of $H^k$. 
For instance, $H^1_0$ 
denotes the closed nullspace of the Dirichlet trace operator $\gamma_0$.

To consider mixed boundary value problems on $\domain$, we partition $\Gamma = \partial \domain$
into two disjoint pieces $\Gamma_D$ and $\Gamma_N$. 
Assuming positive $(d-1)$-dimensional 
measure of $\Gamma_D$ if $d=2,3$, or
that $\Gamma_D$ contains at least one endpoint of $D$ if $d=1$, 
we set
\[
H^1_{\Gamma_D}(\domain) 
:= 
\{ v \in H^1(\domain) |\ \gamma_0(v)_{\mid_{\Gamma_D}} = 0 \}
\;.
\]
Evidently, for $\Gamma_D\subset\Gamma$,
$H^1_0(\domain) = H^1_\Gamma(\domain) \subset H^1_{\Gamma_D}(\domain) \subset H^1(\domain)$.

In the following, 
we introduce Sobolev spaces for functions defined on an interval $(a,b) \subset \IR$ with $a<b$. 
For simplicity, 
we consider real-valued functions 
$v \colon \, (a,b) \to \IR$. 
All results and proofs can be generalized straightforwardly 
to $X$-valued functions $v \colon \, (a,b) \to X$ for a Hilbert space $X$, i.e., Bochner--Sobolev spaces. 
We write 
\[
\begin{split}
H^1_{0,}(a,b) &= H^1_{\{a\}}(a,b) = \{v \in H^1(a,b) |\ v(a) = 0 \}, \\
H^1_{,0}(a,b) &= H^1_{\{b\}}(a,b) = \{v \in H^1(a,b) |\ v(b) = 0 \}\;.
\end{split}
\]
In either of these two spaces, the seminorm 
$| \circ |_{H^1(a,b)} = \| \partial_t \circ \|_{L^2(a,b)}$ is a norm. 
Thus, $| \circ |_{H^1(a,b)}$ is considered as the norm in $H^1_{0,}(a,b)$ and $H^1_{,0}(a,b)$, 
whereas the space $H^1(a,b)$ is endowed with the norm 
$\|\circ\|_{H^1(a,b)} = ( \|\circ\|_{L^2(a,b)}^2 + \|\partial_t \circ\|_{L^2(a,b)}^2 )^{1/2}$. 

Fractional order spaces shall be defined by interpolation, 
via the real method
of interpolation (see, e.g., \cite[Chapter~1]{Triebel}). 
We use the fine index $q=2$ to preserve the Hilbertian structure. 
Of particular interest will be the space
\[
H^{1/2}_{0,}(a,b) := (H^1_{0,}(a,b),L^2(a,b))_{1/2,2}
\;,
\]
where $|\circ|_{H^1(a,b)}= \| \partial_t \circ \|_{L^2(a,b)}$ is the norm of the space $H^1_{0,}(a,b)$.
The Sobolev space $H^{1/2}_{0,}(a,b)$ is a Hilbert space endowed 
with the interpolation norm (see \cite[Section~2.3]{OStMZ} for $(a,b)=(0,T)$) 
defined by
\be \label{eq:H120def}
\| v \|_{H^{1/2}_{0,}(a,b)}
:= \left(
 \sum_{k=0}^\infty \frac{\pi (2k+1)}{2(b-a)} |v_k|^2
\right)^{1/2}, \quad v \in H^{1/2}_{0,}(a,b),
\ee
where the Fourier coefficients $v_k$ are given by
$
v_k 
= 
\int_a^b v(s) V_k(s) \mathrm ds
\;.
$
Here, we use that any $z \in L^2(a,b)$ admits a representation as a Fourier series
\be \label{eq:FourierRepresentation}
    z(t) = \sum_{k=0}^\infty z_k V_k(t), \quad z_k = \int_a^b z(s) V_k(s) \mathrm ds, \; k \in \IN_0,
\ee
where $V_k$ denotes an 
eigenfunction corresponding to 
eigenvalue $\lambda_k = \frac{\pi^2 (2k+1)^2}{4(b-a)^2}$ 
of
\be \label{time:eigenvalues}
  -\partial_{tt} V_k(t) = \lambda_k V_k(t) \; \text{ for } t \in (a,b), \quad V_k(a)=\partial_t V_k(b)=0, \quad \norm{V_k}_{L^2(a,b)} = 1.
\ee
In particular for $J=(0,T)=(a,b)$, we have
\[
\| v \|_{H^{1/2}_{0,}(J)}
= \left( \frac{\pi}{2T} 
\sum_{k=0}^\infty  (2k+1) |v_k|^2
\right)^{1/2}, \quad v \in H^{1/2}_{0,}(J),
\]
with the Fourier representation
\be \label{FS:FourierSeries}
    v(t) = \sum_{k=0}^\infty v_k \sqrt{\frac{2}{T}} \sin\left(\left(\frac{\pi}{2}+k\pi\right)\frac{t}{T}\right), \; v_k = \int_0^T v(s) \sqrt{\frac{2}{T}} \sin\left(\left(\frac{\pi}{2}+k\pi\right)\frac{s}{T}\right) \mathrm ds.
\ee
Analogous to $H^{1/2}_{0,}(J)$, the Hilbert space
  $  H^{1/2}_{,0}(J) := (H^1_{,0}(J),L^2(J))_{1/2,2}$
is endowed with the Hilbertian norm (see \cite[Section~2.3]{OStMZ}) defined by
\[
\| w \|_{H^{1/2}_{,0}(J)}
:= \left( \frac{\pi}{2T} 
\sum_{k=0}^\infty  (2k+1) |w_k|^2
\right)^{1/2}, \quad w \in H^{1/2}_{,0}(J),
\]
where the Fourier coefficients are given by
$
w_k 
= 
 \int_0^T w(s) \sqrt{\frac{2}{T}} \cos\left(\left(\frac{\pi}{2}+k\pi\right)\frac{s}{T}\right) \mathrm ds.
$

To prove exponential convergence of a temporal $hp$-discretization, 
we need further investigations of the Sobolev space $H^{1/2}_{0,}(a,b)$
and its norm $\| \circ \|_{H^{1/2}_{0,}(a,b)}$. 
For this purpose, 
let the classical Sobolev space $H^{1/2}(a,b)$ be endowed with the Slobodetskii norm \cite[p.~74]{McLean2000}
\be \label{Sob:Slobodetskii}
\normiii{v}_{H^{1/2}(a,b)} := 
\left(
 \| v \|_{L^2(a,b)}^2
+ 
 |v|_{H^{1/2}(a,b)}^2 
 \right)^{1/2}  
\ee
for $v \in H^{1/2}(a,b)$ with
\be  \label{SlobodetskiiSemi}
    |v|_{H^{1/2}(a,b)} := \left( \int_a^b  \int_a^b \frac{|v(s)-v(t)|^2}{|s-t|^2} \mathrm ds \mathrm dt \right)^{1/2}.
\ee
With the Slobodetskii norm \eqref{Sob:Slobodetskii},
we endow $H^{1/2}_{0,}(a,b)$ with the norm
\be \label{Sob:NormTriple}
\normiii{v}_{H^{1/2}_{0,}(a,b)} := 
\left(
 \| v \|_{L^2(a,b)}^2
+ 
 |v|_{H^{1/2}(a,b)}^2 
+
 \int_a^b \frac{|v(t)|^2}{t-a} \mathrm dt
 \right)^{1/2}
\ee
for $v \in H^{1/2}_{0,}(a,b)$. 
We have the following 
equivalence result
for the norms defined in~\eqref{eq:H120def} and~\eqref{Sob:NormTriple},
which is proven, e.g., in~\cite{McLean2000} 
(see the proof in Appendix~\ref{sec:NormEquivalence} for the characterization of the equivalence constants).
\begin{lemma} \label{lem:NormEquivalence}
    There are constants $C_{\mathrm{Int},1}$, $C_{\mathrm{Int},2}>0$, which are independent of $a,b$, such that
    \begin{equation*}
        C_{\mathrm{Int},1} \| v \|_{H^{1/2}_{0,}(a,b)} 
        \leq 
        \normiii{v}_{H^{1/2}_{0,}(a,b)} 
        \leq 
        C_{\mathrm{Int},2} \sqrt[4]{ 1 + \frac{4(b-a)^2}{\pi^2}} \| v \|_{H^{1/2}_{0,}(a,b)}
    \end{equation*}
    for all $v \in H^{1/2}_{0,}(a,b)$.
\end{lemma}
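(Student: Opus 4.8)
The plan is to reduce the estimate to the reference interval $(0,1)$ by an affine change of variables and to isolate the one term that fails to be scale-invariant. Writing $\tau = (t-a)/(b-a)$ and $\tilde v(\tau) = v(a+(b-a)\tau)$, a direct substitution shows that both the Slobodetskii seminorm \eqref{SlobodetskiiSemi} and the Hardy-type weight $\int_a^b |v(t)|^2/(t-a)\,\mathrm{d}t$ are invariant under this rescaling --- the exponent $2$ in \eqref{SlobodetskiiSemi} being exactly critical in one space dimension --- whereas $\|v\|_{L^2(a,b)}^2 = (b-a)\,\|\tilde v\|_{L^2(0,1)}^2$. Using the explicit, $L^2$-normalized eigenfunctions of \eqref{time:eigenvalues}, the Fourier coefficients transform as $v_k = (b-a)^{1/2}\tilde v_k$, and since $\sqrt{\lambda_k} = \pi(2k+1)/(2(b-a))$ the interpolation norm \eqref{eq:H120def} is itself scale-invariant, $\| v \|_{H^{1/2}_{0,}(a,b)} = \|\tilde v\|_{H^{1/2}_{0,}(0,1)}$. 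Thus only the $L^2$-contribution to $\normiii{\cdot}$ carries the interval length, which will be the sole source of the length-dependent factor.

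The one genuinely analytic ingredient, which I would invoke from \cite{McLean2000}, is the equivalence on the fixed interval $(0,1)$ of the spectral interpolation norm with the explicit norm assembled from the Slobodetskii seminorm and the Hardy weight; this is the Lions--Magenes characterization of $H^{1/2}_{0,}$ as the subspace of $H^{1/2}$ whose elements are square-integrable against $1/(t-a)$ near the left endpoint. I would record it as universal constants $c_1,c_2>0$ with $c_1\|\tilde v\|_{H^{1/2}_{0,}(0,1)}^2 \le \|\tilde v\|_{L^2(0,1)}^2 + |\tilde v|_{H^{1/2}(0,1)}^2 + \int_0^1 |\tilde v(\tau)|^2/\tau\,\mathrm{d}\tau \le c_2\|\tilde v\|_{H^{1/2}_{0,}(0,1)}^2$. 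For the lower bound I would simply discard the nonnegative $L^2$-term and note that on $(0,1)$ the Hardy weight dominates the $L^2$-norm since $1/\tau \ge 1$; hence the scale-invariant part of $\normiii{\cdot}$ already controls the full reference norm, giving the left inequality with $C_{\mathrm{Int},1}$ depending only on $c_1$ and therefore independent of $a,b$.

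For the upper bound the only estimate still needed is a control of $\|v\|_{L^2(a,b)}$ by the interpolation norm. This is a one-line spectral Poincar\'e inequality: the smallest coefficient appearing in \eqref{eq:H120def} is $\sqrt{\lambda_0} = \pi/(2(b-a))$, so Parseval yields $\|v\|_{L^2(a,b)}^2 \le (2(b-a)/\pi)\,\|v\|_{H^{1/2}_{0,}(a,b)}^2$. Combining this with the reference equivalence applied to the scale-invariant part gives $\normiii{v}^2 \le \big(2(b-a)/\pi + c_2\big)\,\|v\|_{H^{1/2}_{0,}(a,b)}^2$. I would then repackage the affine bracket using $\sqrt{1+x^2}\ge (1+x)/\sqrt{2}$ with $x = 2(b-a)/\pi$, so that it is absorbed into $C_{\mathrm{Int},2}^2\sqrt{1+4(b-a)^2/\pi^2}$, and take square roots to recover the stated fourth-root factor.

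I expect the only nontrivial step to be the reference-interval equivalence between the spectral and the explicit Slobodetskii--Hardy norms; its constants $c_1,c_2$ are universal and are absorbed into $C_{\mathrm{Int},1},C_{\mathrm{Int},2}$. Everything else --- the two scaling invariances, the domination of the $L^2$-norm by the Hardy weight on $(0,1)$, and the spectral Poincar\'e inequality --- is elementary, and it is precisely this last step that produces the length dependence $\sqrt[4]{1+4(b-a)^2/\pi^2}$ in the upper constant.
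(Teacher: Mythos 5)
Your proof is correct, but it takes a genuinely different route from the paper's. You reduce to the reference interval $(0,1)$ by an affine change of variables, observe that the Slobodetskii seminorm, the Hardy weight $\int_a^b |v(t)|^2/(t-a)\,\mathrm dt$, and the spectral norm \eqref{eq:H120def} are all scale-invariant while only $\|v\|_{L^2(a,b)}^2$ carries a factor $(b-a)$, and then cite the fixed-interval equivalence (essentially the $(a,b)=(0,1)$ case of the lemma itself) as a black box with universal constants; the length-dependent factor then drops out of the spectral Poincar\'e inequality $\|v\|_{L^2(a,b)}^2\le \tfrac{2(b-a)}{\pi}\|v\|_{H^{1/2}_{0,}(a,b)}^2$ alone. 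The paper instead proves the equivalence directly for general $(a,b)$: it builds an extension operator by even reflection about $t=b$ and zero continuation, interpolates it between $L^2$ and $H^1$ (this is where the Poincar\'e inequality and hence the factor $\sqrt{1+4(b-a)^2/\pi^2}$ enter), explicitly computes the Slobodetskii norm of the extension to show $\normiii{\mathcal E_{1/2}v}_{H^{1/2}(\IR)}^2\le 8\normiii{v}_{H^{1/2}_{0,}(a,b)}^2$, and then invokes only the equivalence of the interpolation and Slobodetskii norms on all of $\IR$ (McLean, Thm.~B.7; Eskin), whose constants are manifestly independent of the interval. Your argument is shorter and isolates the source of the length dependence more transparently, but it leans on a stronger cited input — the Slobodetskii--Hardy characterization of the interpolation space on a bounded interval, which is typically itself established by exactly the kind of reflection-plus-extension argument the paper carries out; the paper's version is thus more self-contained and is what actually justifies the constant-tracking that both proofs aim for. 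Both yield constants of the stated form.
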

The next result is used for the proof of the temporal $hp$-error estimate in Section~\ref{sec:ConvRate}.
It localizes the $H^{1/2}(a,b)$ norm in a certain sense. 
We report its proof in Appendix~\ref{sec:NormEquivalence}, and refer to
\cite{Faermann2000} for a more general localization result. 
\begin{lemma} \label{lem:FractionalNormPointtau}
 For a number $\tau \in (a,b)$, the estimate
 \begin{equation*}
    | v |_{H^{1/2}(a,b)}^2 
    \leq  
    | v |_{H^{1/2}(a,\tau)}^2  + 4 \int_a^{\tau} \frac{|v(t)|^2}{\tau-t} \mathrm dt  
    + 4 \int_{\tau}^b \frac{|v(s)|^2}{s-\tau} \mathrm ds  +   | v |_{H^{1/2}(\tau,b)}^2
 \end{equation*}
 holds true for $v \in H^{1/2}(a,b)$, if all occurring integrals on the right side exist.
\end{lemma}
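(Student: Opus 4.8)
The plan is to exploit the double-integral structure of the Slobodetskii seminorm~\eqref{SlobodetskiiSemi} and to decompose the domain of integration $(a,b)\times(a,b)$ according to the cut point $\tau$. Writing $R_1=(a,\tau)\times(a,\tau)$, $R_2=(\tau,b)\times(\tau,b)$, and the two off-diagonal blocks $R_3=(a,\tau)\times(\tau,b)$ and $R_4=(\tau,b)\times(a,\tau)$, the integral over $R_1$ reproduces exactly $|v|_{H^{1/2}(a,\tau)}^2$ and the integral over $R_2$ reproduces $|v|_{H^{1/2}(\tau,b)}^2$. Since the integrand in~\eqref{SlobodetskiiSemi} is symmetric in its two variables, the contributions of $R_3$ and $R_4$ coincide, so the full seminorm equals the sum of the two subinterval seminorms plus twice the integral over $R_3$. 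It then remains only to bound this single cross term by the two weighted $L^2$-integrals appearing on the right-hand side.

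First I would observe that on $R_3$ one has $t\in(a,\tau)$ and $s\in(\tau,b)$, hence $s>\tau>t$ and the kernel is $(s-t)^{-2}$ with $s-t>0$. Applying the elementary inequality $|v(t)-v(s)|^2\le 2|v(t)|^2+2|v(s)|^2$ splits the cross term into one piece carrying $|v(t)|^2$ and one carrying $|v(s)|^2$. In each piece the kernel integrates explicitly in the ``far'' variable: for fixed $t$, $\int_\tau^b (s-t)^{-2}\,\mathrm ds=(\tau-t)^{-1}-(b-t)^{-1}\le(\tau-t)^{-1}$, and for fixed $s$, $\int_a^\tau (s-t)^{-2}\,\mathrm dt=(s-\tau)^{-1}-(s-a)^{-1}\le(s-\tau)^{-1}$. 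Discarding the two nonnegative subtracted endpoint terms and accounting for the factor $2$ from the symmetry together with the factor $2$ from the inequality yields precisely the constant $4$ in front of each of the two weighted integrals, which completes the bound.

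The argument is entirely elementary, so I do not expect a genuine analytic obstacle; the only delicate points are bookkeeping ones. One must track the two factors of $2$ so that the resulting constant is $4$ rather than $2$ or $8$, and one must verify that the subtracted contributions $(b-t)^{-1}$ and $(s-a)^{-1}$ are indeed nonnegative so that they may be dropped while preserving the inequality direction. The hypothesis that all occurring integrals on the right side exist is exactly what guarantees finiteness of the weighted $L^2$-integrals bounding the cross term, so that the domain splitting and each estimate are justified; the pointwise values $v(t)$, $v(s)$ entering the weighted integrals are understood through the usual representatives for which these integrals are meaningful.
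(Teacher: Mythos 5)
Your proposal is correct and follows essentially the same route as the paper's proof in Appendix~\ref{sec:NormEquivalence}: split the double integral at $\tau$, use symmetry to reduce to twice the cross term, bound $|v(s)-v(t)|^2$ by $2|v(s)|^2+2|v(t)|^2$, and integrate the kernel explicitly in the far variable, discarding the nonnegative terms $(b-t)^{-1}$ and $(s-a)^{-1}$. The bookkeeping of the two factors of $2$ yielding the constant $4$ matches the paper exactly.
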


\subsection{Hilbert Transformation $\HT$}
\label{sec:HilbTr}
A key role in the space-time variational formulation of IBVP \eqref{eq:IBVP} 
is taken by the nonlocal operator $\HT\in \cL(L^2(J),L^2(J))$,
which is defined by
\be\label{eq:DefHT}
(\HT v)(t) 
:= 
\sum_{k=0}^\infty v_k \sqrt{\frac{2}{T}} \cos\left(\left(\frac{\pi}{2}+k\pi\right)\frac{t}{T}\right), \quad t \in J.
\ee
Here, $v \in L^2(J)$ and its Fourier coefficients $v_k = \int_0^T v(s) \sqrt{\frac{2}{T}} \sin\left(\left(\frac{\pi}{2}+k\pi\right)\frac{s}{T}\right) \mathrm ds$ are represented as in \eqref{FS:FourierSeries}.
We collect some properties of $\HT$. 
\begin{proposition}[{\cite[Section~2.4]{OStMZ}, \cite{SteinbachZankNoteHT,ZankExactRealizationHT}}] \label{prop:HT}
The modified Hilbert transformation $\HT$ defined in \eqref{eq:DefHT} is a linear isometry as mapping
\be\label{eq:HT1}
    \HT \colon H^{\nu}_{0,} (J) \to H^{\nu}_{,0} (J) \quad \text{ for } \nu \in \{0,  1/2, 1 \}
\ee
and is $H^{1/2}_{0,}(J)$-elliptic, satisfying
\be\label{eq:HT2}
\forall v\in H^{1/2}_{0,}(J):\quad 
\langle \partial_t v , \HT v \rangle_{L^2(J)} 
=
\| v \|^2_{H^{1/2}_{0,}(J)}.
\ee
Additionally, $\HT$ fulfills the following properties:
\be\label{eq:HT3} 
\forall v,w \in H^{1/2}_{0,}(J) : \quad
\langle \partial_t w , \HT v \rangle_{L^2(J)}
=
\langle \HT w,\partial_t v \rangle_{L^2(J)}
=
\langle w,v \rangle_{H^{1/2}_{0,}(J)}
\;,
\ee
\be\label{eq:HT4} 
\forall w \in H^1_{0,}(J), \forall v \in L^2(J) : \quad 
\langle \partial_t \HT w,v \rangle_{L^2(J)} 
=
-\langle \HT^{-1} \partial_t w,v \rangle_{L^2(J)} 
\;,
\ee
\be\label{eq:HT5} 
\forall v, w\in L^2(J) :
\quad 
\langle \HT v,w \rangle_{L^2(J)} 
=
\langle v, \HT^{-1} w \rangle_{L^2(J)}
\;,
\ee
\be\label{eq:HT6} 
\forall v \in L^2(J): \quad \langle v, \mathcal{H}_T v \rangle_{L^2(J)} \geq 0
\;,
\ee
\be\label{eq:HT7} 
\forall \nu \in \{1/2, 1\}, \forall v \in H^{\nu}_{0,}(J), v \neq 0 : \quad \langle v, \mathcal{H}_T v \rangle_{L^2(J)} > 0
\;.
\ee
\end{proposition}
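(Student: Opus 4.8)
The plan is to diagonalize $\HT$ simultaneously in the two orthonormal bases of $L^2(J)$ underlying \eqref{FS:FourierSeries} and \eqref{eq:DefHT}. Write $S_k(t):=\sqrt{2/T}\sin((\tfrac\pi2+k\pi)\tfrac tT)$ and $C_k(t):=\sqrt{2/T}\cos((\tfrac\pi2+k\pi)\tfrac tT)$, and set $\omega_k:=\tfrac{\pi(2k+1)}{2T}$; then $\{S_k\}_{k\in\IN_0}$ and $\{C_k\}_{k\in\IN_0}$ are orthonormal bases of $L^2(J)$ with $\partial_t S_k=\omega_k C_k$ and $\partial_t C_k=-\omega_k S_k$, and \eqref{eq:DefHT} says precisely that $\HT$ is the bounded linear map fixed by $\HT S_k=C_k$ (so $\HT^{-1}C_k=S_k$). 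For $v=\sum_k v_k S_k$ the norms of Section~\ref{sec:FctSpc} become $\|v\|_{L^2(J)}^2=\sum_k v_k^2$, $\|v\|_{H^{1/2}_{0,}(J)}^2=\sum_k\omega_k v_k^2$ and $|v|_{H^1(J)}^2=\sum_k\omega_k^2 v_k^2$, with the analogous formulas in $H^\nu_{,0}(J)$ in terms of the cosine coefficients $\langle w,C_k\rangle$; every assertion will be read off from these coefficient identities.

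Since $\HT$ carries one orthonormal basis onto the other it is an $L^2(J)$-isometry, and as it multiplies the $k$-th coefficient by $1$ while leaving the weights $\omega_k,\omega_k^2$ in place, \eqref{eq:HT1} follows for $\nu\in\{0,1/2,1\}$; here $\HT v=\sum_k v_k C_k$ satisfies $(\HT v)(T)=0$, so it indeed lands in $H^\nu_{,0}(J)$. For \eqref{eq:HT2} I expand $\partial_t v=\sum_k\omega_k v_k C_k$ and $\HT v=\sum_k v_k C_k$, so orthonormality of $\{C_k\}$ gives $\langle\partial_t v,\HT v\rangle_{L^2(J)}=\sum_k\omega_k v_k^2=\|v\|_{H^{1/2}_{0,}(J)}^2$, and the same bookkeeping yields \eqref{eq:HT3} via $\langle w,v\rangle_{H^{1/2}_{0,}(J)}=\sum_k\omega_k w_k v_k$. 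Identity \eqref{eq:HT4} is $\partial_t\HT w=-\sum_k\omega_k w_k S_k=-\HT^{-1}\bigl(\sum_k\omega_k w_k C_k\bigr)=-\HT^{-1}\partial_t w$ tested against $v$, and \eqref{eq:HT5} is just $\HT^*=\HT^{-1}$, which holds because a map sending one orthonormal basis onto another is orthogonal.

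The substance is in the positivity statements \eqref{eq:HT6} and \eqref{eq:HT7}, governed by the mixed products $\langle v,\HT v\rangle_{L^2(J)}=\sum_{j,k}v_jv_k\langle S_j,C_k\rangle_{L^2(J)}$. A product-to-sum computation gives $\langle S_j,C_k\rangle_{L^2(J)}=\tfrac{2}{\pi(j+k+1)}$ when $j-k$ is even and $\tfrac{2}{\pi(j-k)}$ when $j-k$ is odd. The odd part is antisymmetric in $(j,k)$ and hence annihilated by the symmetric weights $v_jv_k$, leaving $\langle v,\HT v\rangle_{L^2(J)}=\tfrac2\pi\sum_{j-k\ \text{even}}\tfrac{v_jv_k}{j+k+1}$. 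Splitting by the common parity of $j,k$ and using the moment identities $\tfrac{1}{2(m+n)+1}=\tfrac12\int_0^1 x^{m+n-1/2}\,\dd x$ and $\tfrac{1}{2(m+n)+3}=\tfrac12\int_0^1 x^{m+n+1/2}\,\dd x$, I rewrite this as $\tfrac1\pi\int_0^1 x^{-1/2}\bigl(\sum_m v_{2m}x^m\bigr)^2\,\dd x+\tfrac1\pi\int_0^1 x^{1/2}\bigl(\sum_m v_{2m+1}x^m\bigr)^2\,\dd x$. Both integrands are nonnegative, which gives \eqref{eq:HT6}; and on $H^{1/2}_{0,}(J)$ or $H^1_{0,}(J)$ the integrals are strictly positive unless all coefficients vanish, i.e. unless $v=0$, giving \eqref{eq:HT7}.

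The main obstacle is the last reduction. Evaluating $\langle S_j,C_k\rangle$ and discarding the antisymmetric part is routine only formally: the mixed double series is not absolutely convergent, since the odd-parity kernel $1/(j-k)$ fails the Schur test, so the cancellation must be justified. I would therefore establish the weighted-Hilbert-matrix identity and hence \eqref{eq:HT6} first on the dense subspace of finite combinations of the $S_k$, where every sum is finite, and then extend the inequality $\langle v,\HT v\rangle_{L^2(J)}\ge0$ to all of $L^2(J)$ by continuity of the bounded bilinear form; the strict inequality \eqref{eq:HT7} I would prove on the smoother spaces, where the additional decay of $(v_k)$ renders the rearrangement and the integral representation rigorous and the positivity of the integrands conclusive.
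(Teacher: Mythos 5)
The paper does not prove Proposition~\ref{prop:HT} at all: it is quoted verbatim from \cite{OStMZ,SteinbachZankNoteHT,ZankExactRealizationHT}, so there is no in-paper argument to compare against. Your proof is correct and self-contained. Properties \eqref{eq:HT1}--\eqref{eq:HT5} are indeed immediate once one observes that $\HT$ maps the sine eigenbasis onto the cosine eigenbasis, that $\partial_t S_k=\omega_k C_k$, $\partial_t C_k=-\omega_k S_k$, and that all the norms in Section~\ref{sec:FctSpc} are diagonal in these bases with weights $\omega_k^{2\nu}$; your only implicit gap there is that for $v\in H^{1/2}_{0,}(J)$ the pairing $\langle\partial_t v,\HT v\rangle_{L^2(J)}$ in \eqref{eq:HT2}--\eqref{eq:HT3} must be read as the $[H^{1/2}_{,0}(J)]'\times H^{1/2}_{,0}(J)$ duality (as the paper notes after \eqref{eq:IVPw}), which your coefficient identity handles after the usual density argument. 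The substantive content is \eqref{eq:HT6}--\eqref{eq:HT7}, and your computation checks out: $\langle S_j,C_k\rangle_{L^2(J)}=\tfrac{2}{\pi(j+k+1)}$ for $j-k$ even and $\tfrac{2}{\pi(j-k)}$ for $j-k$ odd, the antisymmetric odd-parity part cancels against the symmetric weights $v_jv_k$, and the Beta-integral (moment) representation exhibits the remaining weighted Hilbert-matrix form as a sum of two manifestly nonnegative integrals, with strict positivity following from the identity theorem for the power series $\sum_m v_{2m}x^m$ and $\sum_m v_{2m+1}x^m$. You are also right to flag that the odd-parity double sum is not absolutely convergent on $\ell^2$, so the cancellation must be performed on finite linear combinations and the resulting inequality extended by continuity of the bounded form $\langle\cdot,\HT\cdot\rangle_{L^2(J)}$ (the even-parity form is itself $L^2$-bounded by Hilbert's inequality, so the identity, not just the inequality, extends); this is exactly the care the argument needs. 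The cited references obtain \eqref{eq:HT6}--\eqref{eq:HT7} by related Fourier/kernel manipulations, so your route is an acceptable, arguably more elementary, replacement for the external citation.
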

\begin{remark}\label{rmk:Tinfty}
We remark that \eqref{eq:HT1}--\eqref{eq:HT7} are valid for all $T>0$.
In particular, these identities remain stable under passage to the
limit $T\to \infty$, with appropriate modifications of spaces.
We refer to \cite{DD20} for a space-time variational formulation
and a discussion of a Petrov--Galerkin discretization for the resulting
limiting problems.
\end{remark}

\subsection{Model Scalar Initial Value Problem}
\label{sec:IVP}
In $J=(0,T)$, for a given right-hand side $f$, 
consider the scalar IVP to find a function $u:J\to \IR$
such that
\[
    \partial_t u = f \text{ in } J, \quad u(0) = 0 \;.
\]
A weak formulation relevant for treatment of IBVP \eqref{eq:IBVP} is 
to find $u\in H^{1/2}_{0,}(J)$ such that
\be\label{eq:IVPw}
    \forall w \in H^{1/2}_{,0}(J) : \; \langle \partial_t u,w \rangle_{L^2(J)} = \langle f , w \rangle_{L^2(J)}
\ee
for given $f\in [H^{1/2}_{,0}(J)]'$. Here, $\langle \circ, \circ \rangle_{L^2(J)}$ denotes the inner product in $L^2(J)$ and as continuous extension of it, also the duality pairing with respect to $[H^{1/2}_{,0}(J)]'$ and $H^{1/2}_{,0}(J)$.
The continuous bilinear form on the left side of \eqref{eq:IVPw} is inf-sup stable:
\be\label{eq:infsup}
\inf_{0\ne u\in H^{1/2}_{0,}(J)} \sup_{0 \ne w \in H^{1/2}_{,0}(J)}
\frac{\langle \partial_t u , w \rangle_{L^2(J)} }{\| u \|_{H^{1/2}_{0,}(J)} \| w \|_{H^{1/2}_{,0}(J)}}
\geq 1 \;.
\ee
This is shown in~\cite[Rem.~2.10]{OStMZ} by 
observing that, for every $u\in H^{1/2}_{0,}(J)$,
\[
\| u \|_{H^{1/2}_{0,}(J)} 
= 
\frac{\langle \partial_t u , \HT u\rangle_{L^2(J)}}{\| \HT u \|_{H^{1/2}_{,0}(J)}}
\leq 
\sup_{0\ne w \in H^{1/2}_{,0}(J)} \frac{\langle \partial_t u , w \rangle_{L^2(J)} }{ \| w \|_{H^{1/2}_{,0}(J)} }
\;.
\]
For every $f\in [H^{1/2}_{,0}(J)]'$, 
IVP \eqref{eq:IVPw} then admits a unique solution $u\in H^{1/2}_{0,}(J)$.

For the derivation of a space-time variational formulation of \eqref{eq:IBVP}, 
it is useful to consider a \emph{parametric IVP}: 
for a given parameter $\mu\geq 0$ (eventually in the spectrum of the spatial
operator of \eqref{eq:IBVP})
and for $f\in  [H^{1/2}_{,0}(J)]'$, 
find $u\in H^{1/2}_{0,}(J)$ such that
$\partial_t u + \mu u = f $ in $ [H^{1/2}_{,0}(J)]' $. 
A \emph{Petrov--Galerkin variational form} of this problem
is to find $u\in H^{1/2}_{0,}(J)$ such that 
\be\label{eq:pIVPPG} 
\forall w \in H^{1/2}_{,0}(J) : \, \langle \partial_t u, w\rangle_{L^2(J)} + \mu \langle u, w \rangle_{L^2(J)} 
= 
\langle f,w \rangle_{L^2(J)}
\;.
\ee
A \emph{Bubnov--Galerkin variational form} 
with equal trial and test function spaces
is to find $u\in H^{1/2}_{0,}(J)$ such that
\be\label{eq:pIVPBG}
\forall v\in H^{1/2}_{0,}(J) : \, \langle \partial_t u , \HT v \rangle_{L^2(J)} 
+ 
\mu \langle u,\HT v \rangle_{L^2(J)} 
=
\langle f,\HT v \rangle_{L^2(J)}
\;.
\ee
Both formulations \eqref{eq:pIVPPG} and \eqref{eq:pIVPBG} admit unique solutions
due to \eqref{eq:infsup} and \eqref{eq:HT6}.

\subsection{Temporal $hp$-Discretization}
\label{sec:thp}
To discretize \eqref{eq:pIVPBG}, we use 
some space $V^M_t \subset H^{1/2}_{0,}(J)$
of finite dimension $M = {\rm dim}(V^M_t)$.
In the $hp$-time discretization, we build $V^M_t$ as follows:
on a partition $\cG = \{ I_j \}_{j=1}^m$ of $J$  into $m$ time intervals
$I_j:= (t_{j-1},t_j)$, where $0 =: t_0 < t_1 < \dots < t_m := T$,
we choose $V^M_t$ as a space of continuous, piecewise polynomials 
of degrees $p_j \geq 1$, which we collect in the 
\emph{degree vector} 
$\bmp := (p_j)_{j=1}^m\in \IN^m$.  
We define
\be\label{eq:DefVMt}
V^M_t = S^{\bmp,1}_{0,}(J;\cG) 
:= 
\{ v \in H^1_{0,}(J): v_{\mid_{I_j}} \in \IP^{p_j},\; I_j \in \cG\}
\;.
\ee
Here, continuity between adjacent time-intervals is required to ensure 
$S^{\bmp,1}_{0,}(J;\cG) \subset H^{1/2}_{0,}(J)$.
Then 
$M = {\rm dim}(S^{\bmp,1}_{0,}(J;\cG)) = \left( \sum_{j=1}^m (p_j+1)\right) - m = \sum_{j=1}^m p_j$.

We restrict~\eqref{eq:pIVPBG} to $V^M_t$ to obtain the temporal 
$hp$-approximation:
find $u^M_t\in V^M_t$ such that
\be\label{eq:pIVPBGhp}
\forall v\in  V^M_t : \,
\langle \partial_t u^M_t , \HT v \rangle_{L^2(J)}
+
\mu \langle u^M_t , \HT v \rangle_{L^2(J)}
=
\langle f,\HT v \rangle_{L^2(J)}
\;.
\ee
Due to the inf-sup stability \eqref{eq:infsup} and $\mu\geq 0$, 
the discretization \eqref{eq:pIVPBGhp} is well-posed with inf-sup
constant independent of $\cG$ and of $\bmp$.
Its numerical implementation will require, similar to \cite{OStMZ,DD20},
the efficient evaluation of $\HT v$ for $v\in V^M_t$.
We shall address this in Section \ref{sec:NumHT}
below.

\subsection{Space-Time Variational Formulation}
\label{sec:xtVarForm}
We consider the source problem corresponding to the spatial part
of \eqref{eq:IBVP}. 
Its variational form reads: given a source term $f\in L^2(\domain)$,
find 
\be\label{eq:xPbm}
w\in H^1_{\Gamma_D}(\domain) \;\; \mbox{such that } \forall v\in  H^1_{\Gamma_D}(\domain) : \,
a(w,v) = \langle f,v \rangle_{L^2(\domain)} \;.
\ee
Here, $a(w,v) = \int_\domain A(x)\nabla_x w(x) \cdot \nabla_x v(x) \mathrm dx$.
We assume uniform positive definiteness of~$A$:
\be\label{eq:Acoerc}
a_{\min}:= 
\essinf_{x\in \domain} 
          \inf_{0 \ne \xi\in \IR^d} \frac{\xi^\top A(x) \xi}{\xi^\top\xi} > 0  
\;.
\ee
With assumption \eqref{eq:Acoerc}, we have
\[
\forall w\in  H^1_{\Gamma_D}(\domain): 
\;\;
a(w,w) \geq a_{\min} \| \nabla_x w \|_{L^2(\domain)}^2 \geq a_{\min}c\| w \|_{H^1(\domain)}^2
\]
due to $|\Gamma_D|>0$ if $d=2,3$ or $\Gamma_D\ne\emptyset$ if $d=1$, 
and the Poincar\'{e} inequality.

The spectral theorem and the symmetry $a(w,v) = a(v,w)$ for all $v,w\in H^1(\domain)$
ensure that the corresponding eigenvalue problem to find 
\be\label{eq:xtevp}
0 \ne \phi\in H^1_{\Gamma_D}(\domain), \; \mu\in \IR: \;\; \forall v\in  H^1_{\Gamma_D}(\domain) : \,
a(\phi,v) = \mu \langle \phi,v \rangle_{L^2(\domain)}
\ee
admits a sequence of eigenpairs $\{(\mu_k,\phi_k)\}_{k\geq 1}$ enumerated
in increasing order of the real eigenvalues $\mu_k > 0$, repeated according to multiplicity,
with the eigenfunctions $\phi_k$ orthonormal in $L^2(\domain)$ and orthogonal in $H^1_{\Gamma_D}(\domain)$, 
and with $\mu_k$ accumulating only at $\infty$.
In view of the forthcoming analysis, 
\emph{in what follows, we endow $H^1_{\Gamma_D}(\domain)$ with the ``energy'' norm $a(\circ,\circ)^{1/2}$}.
We remark that, for $v\in H^1_{\Gamma_D}(\domain)$, 
$a(v,v)=\sum_{i=1}^\infty \mu_i|v_i|^2$, 
where $v_i=\langle v,\phi_i \rangle_{L^2(\domain)}$.

The space-time variational formulation of \eqref{eq:IBVP}
is based on the intersection space
\[
H^{1,1/2}_{\Gamma_D;0,}(Q) 
:= 
\left(L^2(J) \otimes H^1_{\Gamma_D}(\domain)\right) \cap \left(H^{1/2}_{0,}(J)\otimes L^2(\domain)\right)\;,
\]
which we equip with the corresponding sum norm.
The space $H^{1,1/2}_{\Gamma_D;,0}(Q)$ is defined analogously.
Proceeding as in \cite[Thm.~3.2]{OStMZ},
the \emph{initial-boundary value problem \eqref{eq:IBVP}--\eqref{eq:BC} 
    is set as a well-posed operator equation}.
\begin{theorem}\label{thm:Biso}
Consider \eqref{eq:IBVP}--\eqref{eq:BC} with homogeneous data $u_0 = 0$
in~\eqref{eq:IC} and $u_D, u_N = 0$ in~\eqref{eq:BC}. Assume
$|\Gamma_D|>0$ if $d=2,3$ or $\Gamma_D\ne\emptyset$ if $d=1$, 
and that the coefficient
$A\in L^\infty(\domain;\IR^{d\times d}_{\mathrm{sym}})$ satisfies~\eqref{eq:Acoerc}.

Then, the space-time variational formulation of \eqref{eq:IBVP} to
find $u\in H^{1,1/2}_{\Gamma_D;0,}(Q)$ such that
\be\label{eq:IBVPVar}
    \forall v\in H^{1,1/2}_{\Gamma_D;,0}(Q) : \, \langle \partial_t u , v \rangle_{L^2(Q)} 
    +
    \langle A \nabla_x u, \nabla_x v \rangle_{L^2(Q)} 
    = 
    \langle g, v \rangle_{L^2(Q)} 
\ee
induces an isomorphism 
\[
B:= \partial_t + A(\partial_x)  \in \cL_{\mathrm{iso}}(H^{1,1/2}_{\Gamma_D;0,}(Q),[H^{1,1/2}_{\Gamma_D;,0}(Q)]')\;.
\]
In particular, 
for every $g\in [H^{1,1/2}_{\Gamma_D;,0}(Q)]'$, IBVP $Bu = g$
in \eqref{eq:IBVP} 
admits a unique solution $u\in H^{1,1/2}_{\Gamma_D;0,}(Q)$.
\end{theorem}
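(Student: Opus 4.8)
The plan is to verify the hypotheses of the Banach--Ne\v{c}as--Babu\v{s}ka (BNB) theorem for the bilinear form $b(u,v) := \langle \partial_t u, v\rangle_{L^2(Q)} + \langle A\nabla_x u, \nabla_x v\rangle_{L^2(Q)}$ on $X\times Y$, with $X := H^{1,1/2}_{\Gamma_D;0,}(Q)$ and $Y := H^{1,1/2}_{\Gamma_D;,0}(Q)$: namely, boundedness of $b$, the inf--sup condition
\[
\exists\,\beta>0:\quad \inf_{0\ne u\in X}\ \sup_{0\ne v\in Y}\ \frac{b(u,v)}{\|u\|_X\|v\|_Y}\ \geq\ \beta,
\]
and the non-degeneracy $\sup_{0\ne u\in X} b(u,v)>0$ for every $0\ne v\in Y$. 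Together these yield $B\in\cL_{\mathrm{iso}}(X,Y')$ and hence unique solvability of \eqref{eq:IBVPVar}.

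The structural device I would use is the spatial spectral decomposition. Since $A$ is self-adjoint, time-independent and satisfies \eqref{eq:Acoerc}, the eigenpairs $\{(\mu_k,\phi_k)\}_{k\ge1}$ of \eqref{eq:xtevp} form an $L^2(\domain)$-orthonormal, $a(\cdot,\cdot)$-orthogonal basis. Expanding $u=\sum_k u_k\,\phi_k$, $v=\sum_k v_k\,\phi_k$ with temporal coefficients $u_k\in H^{1/2}_{0,}(J)$, $v_k\in H^{1/2}_{,0}(J)$, the tensor-product structure gives $\|u\|_X^2=\sum_k\bigl(\|u_k\|_{H^{1/2}_{0,}(J)}^2+\mu_k\|u_k\|_{L^2(J)}^2\bigr)$, and, because $\HT$ acts only in time while $A$ acts only in space, the form decouples as $b(u,v)=\sum_k\bigl(\langle\partial_t u_k,v_k\rangle_{L^2(J)}+\mu_k\langle u_k,v_k\rangle_{L^2(J)}\bigr)$. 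This reduces the whole statement to the family of scalar parametric forms \eqref{eq:pIVPBG}, with the mapping properties of $\partial_t$ and $\HT$ from Proposition~\ref{prop:HT} available mode-by-mode and uniformly in $\mu\ge0$.

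Boundedness is the routine step: the temporal term is controlled by $\|u\|_{H^{1/2}_{0,}(J;L^2(\domain))}\|v\|_{H^{1/2}_{,0}(J;L^2(\domain))}$ using that $\partial_t\colon H^{1/2}_{0,}(J)\to[H^{1/2}_{,0}(J)]'$ is isometric (by \eqref{eq:HT3}), while the spatial term is bounded by $\|u\|_{L^2(J;H^1_{\Gamma_D}(\domain))}\|v\|_{L^2(J;H^1_{\Gamma_D}(\domain))}$ via the energy norm; the sum-norm then yields $|b(u,v)|\le C\|u\|_X\|v\|_Y$. The non-degeneracy condition I would dispatch cleanly: given $0\ne v\in Y$, test with $u:=\HT^{-1}v$ applied in time, so $u\in X$. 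Writing $w_k:=\HT^{-1}v_k\in H^{1/2}_{0,}(J)$, identity \eqref{eq:HT2} gives $\langle\partial_t w_k,\HT w_k\rangle_{L^2(J)}=\|w_k\|_{H^{1/2}_{0,}(J)}^2=\|v_k\|_{H^{1/2}_{,0}(J)}^2$ by the isometry \eqref{eq:HT1}, while $\mu_k\langle w_k,\HT w_k\rangle_{L^2(J)}\ge0$ by \eqref{eq:HT6}; summing yields $b(\HT^{-1}v,v)\ge\|v\|_{H^{1/2}_{,0}(J;L^2(\domain))}^2>0$.

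The main obstacle is the inf--sup condition, specifically controlling the \emph{full} intersection norm uniformly in the spectral parameter. Testing with the Bubnov--Galerkin supremizer $v:=\HT u$ (in time) gives, by \eqref{eq:HT2} and \eqref{eq:HT6}, $b(u,\HT u)=\|u\|_{H^{1/2}_{0,}(J;L^2(\domain))}^2+\sum_k\mu_k\langle u_k,\HT u_k\rangle_{L^2(J)}\ge\|u\|_{H^{1/2}_{0,}(J;L^2(\domain))}^2$, and $\HT$ being an $L^2(J)$- and $H^{1/2}$-isometry gives $\|\HT u\|_Y=\|u\|_X$. This controls the temporal part of $\|u\|_X$ but \emph{not} the spatial part $\sum_k\mu_k\|u_k\|_{L^2(J)}^2$, since $\langle u_k,\HT u_k\rangle_{L^2(J)}$ is not bounded below by $\|u_k\|_{L^2(J)}^2$ uniformly over temporal frequencies. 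Overcoming this is the crux: I would augment the supremizer by a second test function adapted to the $\mu_k$-weighted $L^2(J)$ contribution and prove the resulting quantitative, $\mu$-uniform inf--sup bound for \eqref{eq:pIVPBG} from the fine ellipticity and commutation properties of $\HT$. This is precisely the estimate established in \cite[Thm.~3.2]{OStMZ} (see also \cite{SteinbachZankNoteHT,ZankExactRealizationHT}), which I would invoke mode-by-mode and reassemble, with $\beta$ independent of $k$, to conclude the inf--sup condition on $X\times Y$.
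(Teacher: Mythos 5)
Your proposal is correct and follows essentially the same route as the paper: the paper proves Theorem~\ref{thm:Biso} by invoking \cite[Thm.~3.2, Cor.~3.3]{OStMZ} (noting that, with $H^1_{\Gamma_D}(\domain)$ equipped with the energy norm $a(\circ,\circ)^{1/2}$, the argument for general $A$ reduces verbatim to the case $A=\mathbb{I}$), which is exactly the spectral-decomposition/BNB argument you outline, with the $\mu$-uniform inf--sup bound for the parametric IVP \eqref{eq:pIVPBG} as the crux. Your identification of why the naive supremizer $\HT u$ fails to control the spatial part of the intersection norm, and your deferral of the augmented-supremizer estimate to \cite[Thm.~3.2]{OStMZ}, matches the paper's own treatment.
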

We remark that $\langle \circ, \circ \rangle_{L^2(Q)}$ denotes the inner product in $L^2(Q)$ and as continuous extension of it, also the duality pairing with respect to $[H^{1,1/2}_{\Gamma_D;,0}(Q)]'$ and $H^{1,1/2}_{\Gamma_D;,0}(Q)$.
The \emph{space-time discretization} of \eqref{eq:IBVPVar} 
is straightforward: 
for any conforming,
spatial finite element subspace $V^N_x \subset H^1_{\Gamma_D}(\domain)$ of finite dimension $N$, 
and for the temporal $hp$-subspace $V^M_t\subset H^{1/2}_{0,}(J)$ introduced in \eqref{eq:DefVMt},
we restrict \eqref{eq:IBVPVar} to the space-time approximation space
\be\label{eq:xtApprSpc}
V^M_t\otimes V^N_x \subset H^{1,1/2}_{\Gamma_D;0,}(Q) \;.
\ee
That is, we seek an approximate solution 
$u^{MN}\in V^M_t\otimes V^N_x$ such that
\be\label{eq:IBVPVarxt}
\langle \partial_t u^{MN} , v \rangle_{L^2(Q)}
+
\langle A\nabla_x u^{MN}, \nabla_x v \rangle_{L^2(Q)}
=
\langle g, v \rangle_{L^2(Q)}
\ee
holds true for all $v\in (\HT V^M_t) \otimes V^N_x \subset
H^{1,1/2}_{\Gamma_D;,0}(Q)$.

For these choices of test function spaces and for \emph{any} subspace $V^N_x\subset V$ of finite dimension $N$,
as in ~\cite[Sect.~3]{OStMZ}, 
existence and uniqueness of the discrete solution 
$u^{MN}\in V^M_t\otimes V^N_x\subset H^{1,1/2}_{\Gamma_D;0,}(Q)$ 
of~\eqref{eq:IBVPVarxt}
follow from the \emph{continuous inf-sup condition}
\begin{equation*}
\inf_{0\ne u\in H^{1,1/2}_{\Gamma_D;0,}(Q)} \sup_{0 \ne w \in H^{1,1/2}_{\Gamma_D;,0}(Q)}
\frac{\langle \partial_t u , w \rangle_{L^2(Q)}
+\langle A\nabla_x u, \nabla_x w \rangle_{L^2(Q)}}{\| u \|_{H^{1,1/2}_{\Gamma_D;0,}(Q)} \| w \|_{H^{1,1/2}_{\Gamma_D;,0}(Q)}}
\geq \frac12 \;.
\end{equation*}
With $H^1_{\Gamma_D}(\domain)$ endowed with the $a(\circ,\circ)^{1/2}$
  norm, the proof of this condition with constant independent of
  $A$ follows \emph{verbatim} that of~\cite[Thm.~3.2, Cor.~3.3]{OStMZ}
  for the case $A=\mathbb{I}$.
Evidently, the stability of the discrete problem 
is a consequence of the choice of the
test function space~$\HT V^M_t$, 
whose efficient numerical realization will be discussed in
Section~\ref{sec:NumExp}.

\section{Regularity}
\label{sec:Reg}
To obtain convergence rate bounds, we address the regularity of the solution
$u \in H^{1,1/2}_{\Gamma_D;0,}(Q)$. We consider separately the temporal and spatial
regularity. 
The solution operator to the parabolic equation \eqref{eq:IBVP} 
being an analytic semigroup, for time-analytic forcing $g$ in \eqref{eq:IBVP}
we expect time-analyticity of $u$.
This, in turn, is well-known to imply 
exponential convergence of $hp$-time-stepping as shown, e.g., 
in \cite{SS00_340,DD20} and the references there.
We shall verify this in Sections \ref{sec:Approx} and \ref{sec:ConvRate} below.

\subsection{Time-Analyticity}
\label{sec:tReg}
We quantify the temporal analyticity of the solution $u:t\mapsto u(t)$ with $u(t) := u(t,\circ) \in L^2(\domain)$.
To this end, we recall the eigenvalue problem~\eqref{eq:xtevp}.
Setting $H:=L^2(\domain)$ and thus denoting by 
$\langle \circ,\circ \rangle_H$ the $L^2(\domain)$ inner product, 
the solution $u(t)$ of~\eqref{eq:IBVP} at time $t>0$ 
for $g=0$, $u_D = u_N = 0$, and for initial data $u_0\in H$ 
may be written as
\be\label{eq:T(t)}
u(t) = \semigroup(t)u_0 := \sum_{i=1}^\infty \exp(-\mu_i t) \langle u_0,\phi_i \rangle_H \phi_i 
\ee
with convergence of the series in $H$. 
The operators $\{ \semigroup(t) \}_{t\geq 0}$ satisfy the semigroup property
in~$H$, i.e.,
\[
\forall s,t > 0: \;\; \semigroup(s+t) = \semigroup(s)
\semigroup(t),\;\; \semigroup(0) = \operatorname{Id}\;.
\]
For $\s\geq 0$, we define
the scale of spaces $X_\s\subset H=X_0$
\be\label{eq:Xs}
X_\s := \{ v\in H: \| v \|_{X_\s}^2 := \sum_{i=1}^\infty \mu_i^\s |v_i|^2 <\infty \}\;.
\ee
Here, $v_i = \langle v,\phi_i \rangle_H$ denotes the $i$-th coefficient in the eigenfunction expansion
of $v$ (recall from \eqref{eq:xtevp} that the sequence $\{\phi_i\}_{i\geq 1}$ was assumed to be
an orthonormal basis of $H = X_0$).
We remark that the norm $\| \circ \|_{X_1}$ is the energy-norm on the space 
$V = H^1_{\Gamma_D}(\domain)$, due to
\[
  \forall v\in V:\quad 
  \| v \|_{X_1}^2 = a(v,v) = \sum_{i=1}^\infty \mu_i |v_i|^2 \;.
\]
For $|\Gamma_D|>0$ if $d=2,3$ or $\Gamma_D\ne\emptyset$ if $d=1$, 
$\| \circ \|_{X_1}$ is equivalent to the $H^1(\domain)$ norm on $V$
and the norm bounds 
$\| v \|_{X_\s} \leq c \| v \|_{X_{\s'}}$ for $\s'\geq \s$ 
follow from \eqref{eq:Xs} and the assumed enumeration of the
real eigenvalues $\mu_i>0$ with $\mu_i\uparrow\infty$
as $i\uparrow\infty$:
\be \label{ineq:XrEmbedding}
  \| v \|_{X_\s}^2 
  = \sum_{i=1}^\infty \mu_i^\s |v_i|^2 
  \leq  
  \left( \sup_{m \in \IN} \mu_m^{\s-\s'} \right) \sum_{i=1}^\infty \mu_i^{\s'}|v_i|^2
  \leq  \mu_1^{\s-\s'} \| v \|_{X_{\s'}}^2.
\ee
For $\theta,\s\geq 0$ and for any $t>0$, $\semigroup(t)$ in
  \eqref{eq:T(t)} belongs to $\cL(X_\theta,X_\s)$. In fact, for any
  $t>0$ and $v\in
  X_\theta$, we have
\be\label{eq:newEt}
\| \semigroup(t) v \|_{X_\s}^2  
=
\displaystyle
\sum_{i=1}^\infty \mu_i^\s \exp(-2\mu_i t) |v_i|^2 
=
\sum_{i=1}^\infty \mu_i^{\s-\theta} \exp(-2\mu_i t) \mu_i^\theta |v_i|^2.
\ee
For $\theta \geq  \s\geq 0$, identity~\eqref{eq:newEt} implies
\[
  \| \semigroup(t) v \|_{X_\s}^2
  \leq 
  \mu_1^{-(\theta-\s)} \exp(-2\mu_1 t)\sum_{i=1}^\infty \mu_i^\theta |v_i|^2
  =
  \mu_1^{-(\theta-\s)} \exp(-2\mu_1 t) \| v \|_{X_\theta}^2
\]
for all $v\in X_\theta$, i.e., $\semigroup(t) \in \cL(X_\theta,X_\s)$ for any $t>0$ with
\be\label{eq:gstThetaGreaterR}
 \forall \theta\geq \s\geq 0, \; \forall t>0 : 
\quad      \| \semigroup(t) \|_{\cL(X_\theta,X_\s)}^2 \leq \mu_1^{-(\theta-\s)} \exp(-2\mu_1 t) \;.
\ee
For $\s\geq \theta \geq 0$, for any $t>0$ and $v\in
  X_\theta$, identity~\eqref{eq:newEt} implies
\be\label{eq:gst}
\| \semigroup(t) v \|_{X_\s}^2
\leq  
\displaystyle
\sup_{i \in \IN} \{ \mu_i^{\s-\theta} \exp(-2\mu_i t) \} \sum_{i=1}^\infty \mu_i^\theta |v_i|^2
=:
\displaystyle
G_{\s-\theta}(t) \| v \|_{X_\theta}^2 \;.
\ee
To provide an upper bound for $G_{\s-\theta}(t)$, 
we observe that, for fixed $t, \sigma>0$, the function 
$0 < \mu \mapsto \mu^{2\sigma}\exp(-2\mu t)$ 
takes its maximum at $\mu_*:= \sigma / t $ whence
\be\label{eq:bstBound}
\forall t>0: \quad 
G_{2\sigma}(t) \leq G_{\max}(\sigma,t) := [\mu_*^{\sigma}\exp(-\mu_* t)]^2
=
\left( \frac{\sigma}{t \mathrm{e}} \right)^{2\sigma} 
\;.
\ee
Inserting~\eqref{eq:bstBound} with
$\sigma = (\s-\theta)/2 > 0$ into~\eqref{eq:gst},
we arrive at 
\[
\forall \s\geq \theta \geq 0, \; \forall t>0 : 
\quad 
\| \semigroup(t)\|_{\cL(X_\theta,X_\s)}^2\leq
\left( \frac{\s-\theta}{2t \mathrm{e}} \right)^{\s-\theta}\;.
\]
The exponential decay of the Fourier coefficients for $t>0$ 
implied by the exponential weighting $\exp(-\mu_it)$
entails time-analyticity of the solution $t\mapsto u(t)$ for $t>0$.
To prove exponential convergence rates of $hp$-approximation  
in $J = (0,T)$, we quantify the time regularity of the solution  $u$
of \eqref{eq:IBVP} for $u_0=0$ and $u_D=u_N=0$ with the Duhamel representation  
(see, e.g.,~\cite{Pazy})
\be\label{eq:Duhamel}
u(t) = \int_0^t \semigroup(t-s)g(s)\mathrm ds, \quad 0<t\leq T\;.
\ee
We work under the following
\emph{time-analyticity assumption} on the forcing $g$ in \eqref{eq:IBVP}: 
There exist constants $C>0$ and $\dtime \ge 1$
such that, for some $\varepsilon \in (0,1)$, 
we have
\be\label{eq:TimeReg}
\forall \;l\in\IN_0:\;\; 
\sup_{0\leq t \leq T} 
\| g^{(l)}(t) \|_{X_\varepsilon} \leq C \dtime^l\Gamma(l+1),
\ee
where $\Gamma(\circ)$ denotes the gamma function fulfilling $\Gamma(l) = (l-1)!$ for all $l \in \IN$.
\emph{Formally differentiating} \eqref{eq:Duhamel} 
$l$-times with respect to $t$, 
upon writing it equivalently as $u(t) = \int_0^t \semigroup(s)g(t-s)\mathrm ds$,
gives
\be\label{eq:Tlg}
\frac{\mathrm d^l}{\mathrm dt^l}u(t) 
=
\sum_{i=0}^{l-1} \semigroup^{(i)}(t) g^{(l-1-i)}(0) 
+
\int_0^t \semigroup(s) g^{(l)}(t-s) \mathrm ds\;,
\quad 
l\in \IN, t>0\;.
\ee
The right limits at $t=0$ of the time-derivatives of the forcing $g$ in \eqref{eq:IBVP} 
contribute to the time-regularity.
We estimate the norm of the operators $\semigroup^{(l)}(t)$ in $\cL(X_\theta, X_\s)$.
\begin{lemma}\label{lem:Tt}
For $\s\geq \theta \geq 0$, we have
\be\label{eq:T'TBd}
\forall l\in \IN_0, \;\forall t>0 :\;\;
\| \semigroup^{(l)}(t) \|_{\cL(X_\theta,X_\s)}^2
\leq 
\frac{1}{\sqrt{2\pi}}\left(\frac12\right)^{2l+\s-\theta} 
\Gamma(2l+1+\s-\theta)\, t^{-2l-(\s-\theta)} \;.
\ee
\end{lemma}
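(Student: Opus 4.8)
The plan is to compute $\semigroup^{(l)}(t)$ explicitly by term-by-term differentiation of the spectral series~\eqref{eq:T(t)}. Since $\semigroup(t)v = \sum_{i=1}^\infty \exp(-\mu_i t)v_i\phi_i$, differentiating $l$ times in $t$ brings down a factor $(-\mu_i)^l$, so that formally
\[
\semigroup^{(l)}(t)v = \sum_{i=1}^\infty (-\mu_i)^l\exp(-\mu_i t)v_i\phi_i\;.
\]
First I would justify this differentiation and the membership $\semigroup^{(l)}(t)\in\cL(X_\theta,X_\s)$ for $t>0$ by the same dominated-convergence/uniform-convergence argument already used implicitly for $\semigroup(t)$ in~\eqref{eq:newEt}--\eqref{eq:gst}: for fixed $t>0$ the extra algebraic factor $\mu_i^l$ is absorbed by the exponential $\exp(-\mu_i t)$, so the differentiated series converges in $X_\s$ uniformly on compact subsets of $(0,\infty)$.

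Next I would reduce the operator-norm bound to a scalar supremum exactly as in the derivation of~\eqref{eq:gst}. Using orthonormality of $\{\phi_i\}$ in $H$ and the definition~\eqref{eq:Xs} of the $X_\s$-norm,
\[
\|\semigroup^{(l)}(t)v\|_{X_\s}^2
= \sum_{i=1}^\infty \mu_i^\s\,\mu_i^{2l}\exp(-2\mu_i t)|v_i|^2
\le \Bigl(\sup_{i\in\IN}\mu_i^{2l+\s-\theta}\exp(-2\mu_i t)\Bigr)\sum_{i=1}^\infty \mu_i^\theta|v_i|^2\;,
\]
so that $\|\semigroup^{(l)}(t)\|_{\cL(X_\theta,X_\s)}^2 \le \sup_{\mu>0}\mu^{2l+\s-\theta}\exp(-2\mu t)$. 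This is the same structure as the function $G_{2\sigma}(t)$ appearing in~\eqref{eq:gst}--\eqref{eq:bstBound}, now with exponent $2l+\s-\theta$ rather than $\s-\theta$.

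The remaining, purely scalar step is to bound $\sup_{\mu>0}\mu^{m}\exp(-2\mu t)$ for the exponent $m := 2l+\s-\theta \ge 0$. The naive optimization (as in~\eqref{eq:bstBound}) gives the sharp value $(m/(2te))^{m}$, which is clean but does \emph{not} match the stated right-hand side; the target bound instead carries a $\Gamma(m+1)$ factor and the base $1/2$. The key observation I would use is the integral identity $\Gamma(m+1) = \int_0^\infty \xi^{m}\exp(-\xi)\,\dd\xi$, which after the substitution $\xi = 2\mu t$ reads
\[
\int_0^\infty \mu^{m}\exp(-2\mu t)\,\dd\mu
= \frac{\Gamma(m+1)}{(2t)^{m+1}}\;.
\]
Since the continuous supremum over $\mu>0$ is controlled by a corresponding integral bound of this Gamma type, one extracts precisely the factors $(1/2)^{m}\,\Gamma(m+1)\,t^{-m}$ together with the $1/\sqrt{2\pi}$ constant. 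Substituting $m = 2l+\s-\theta$ then yields the claimed estimate~\eqref{eq:T'TBd}.

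The main obstacle is matching the exact constants in~\eqref{eq:T'TBd} rather than the qualitatively equivalent—but numerically different—sharp bound $(m/(2te))^{m}$. The interesting point is that the authors deliberately trade the sharp $(m/(2te))^m$ for a $\Gamma$-type expression: by Stirling's formula $\Gamma(m+1)\sim\sqrt{2\pi m}\,(m/e)^{m}$, the factor $(1/2)^{m}\Gamma(m+1)t^{-m}\approx \sqrt{2\pi m}\,(m/(2te))^{m}$, so up to the polynomial prefactor $\sqrt{2\pi m}$ the two bounds agree, and the explicit $1/\sqrt{2\pi}$ is exactly calibrated to cancel this growth. The Gamma-function form is the one needed downstream, since it composes cleanly with the factorial growth in the time-analyticity hypothesis~\eqref{eq:TimeReg} when these operator bounds are fed into~\eqref{eq:Tlg} to control $\frac{\dd^l}{\dd t^l}u(t)$. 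I would therefore carry out the scalar estimate with care to produce the stated constants verbatim, rather than settling for the elementary optimum.
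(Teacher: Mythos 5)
Your spectral computation and the reduction of the operator norm to the scalar supremum $\sup_{\mu>0}\mu^{m}\exp(-2\mu t)$ with $m=2l+\s-\theta$ are exactly the paper's steps. The gap is in how you close the scalar estimate. You propose to control this supremum ``by a corresponding integral bound of this Gamma type'', invoking $\int_0^\infty\mu^{m}\exp(-2\mu t)\,\dd\mu=\Gamma(m+1)/(2t)^{m+1}$. But the supremum of a nonnegative function is not bounded by its integral, and here the mismatch is visible already in the scaling: the supremum equals $(m/(2t\mathrm{e}))^{m}$, which scales as $t^{-m}$, whereas the integral scales as $t^{-m-1}$. No manipulation of that identity can produce the factor $t^{-2l-(\s-\theta)}$ in \eqref{eq:T'TBd} together with the stated constants, so this step, as written, fails.

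The correct finish --- which you gesture at in your closing paragraph but do not actually commit to --- is the route the paper takes: keep the elementary optimum
\[
\sup_{\mu>0}\mu^{m}\exp(-2\mu t)=\Bigl(\frac{m}{2t\mathrm{e}}\Bigr)^{m}
=\Bigl(\frac12\Bigr)^{m}\Bigl(\frac{m}{\mathrm{e}}\Bigr)^{m}t^{-m},
\]
and then convert $(m/\mathrm{e})^{m}$ into the Gamma form via the one-sided (non-asymptotic) Stirling inequality $\Gamma(x)\ge\sqrt{2\pi}\,x^{x-1/2}\mathrm{e}^{-x}$, valid for all $x>0$ (see \eqref{Stirling}). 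With $x=m$ this yields $(m/\mathrm{e})^{m}\le\frac{1}{\sqrt{2\pi}}\,m^{-1/2}\,\Gamma(m+1)\le\frac{1}{\sqrt{2\pi}}\,\Gamma(m+1)$ once $m^{-1/2}\le 1$, which is precisely the prefactor $\frac{1}{\sqrt{2\pi}}\bigl(\frac12\bigr)^{m}\Gamma(m+1)$ in \eqref{eq:T'TBd}. Note that this requires the inequality form of Stirling, not the asymptotic equivalence $\Gamma(m+1)\sim\sqrt{2\pi m}\,(m/\mathrm{e})^{m}$ you quote, and that the discarded factor $m^{-1/2}\le1$ is exactly why the Gamma-form bound dominates the sharp optimum. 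Replace the integral-identity step by this Stirling argument and your proof coincides with the paper's.
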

\begin{proof}
For $v\in H=X_0$, with $v_i = \langle v,\phi_i \rangle_H$,
the time-derivative of order $l\in \IN$ applied to $v(t) = \semigroup(t)v$
represented as in \eqref{eq:T(t)} yields (with formal, term-by-term
differentiation)
\[
\frac{\mathrm d^l}{\mathrm dt^l}v(t) 
=
\sum_{i=1}^\infty (-\mu_i)^l \exp(-\mu_i t) v_i \phi_i 
\]
with convergence in $H$ for arbitrary, fixed $t>0$. 
Therefore
\[
\forall t>0:\;\;
\| v^{(l)}(t) \|_{X_\s}^2
=
\sum_{i=1}^\infty \mu_i^{2l+\s-\theta} \exp(-2\mu_i t) \mu_i^\theta |v_i|^2
\;.
\]
It follows from \eqref{eq:bstBound} that 
for every $t>0$
\[
\| v^{(l)}(t) \|_{X_\s}^2
\leq 
G_{2(l+[\s-\theta]/2)}(t) \| v \|_{X_\theta}^2 
\leq 
G_{\max}(l+[\s-\theta]/2,t) \| v \|_{X_\theta}^2
\;.
\]
Therefore, for every $v\in X_\theta$ and every $\s\geq \theta \geq 0$, 
we have
\[
  \begin{split}
\forall l\in \IN, t>0:\quad 
\| v^{(l)}(t) \|_{X_\s}^2
&\leq
\left(\frac{2l+ \s-\theta}{2t\mathrm{e}}\right)^{2l+\s-\theta} \| v
\|_{X_\theta}^2\\
&=\left(\frac12\right)^{2l+\s-\theta}\left(\frac{2l+\s-\theta}{\mathrm{e}}\right)^{2l+\s-\theta}
t^{-2l-(\s-\theta)}\| v\|_{X_\theta}^2 
\;.
\end{split}
\]
For $x\in\mathbb R_+$, the Stirling's formula \eqref{Stirling} states
  $\sqrt{2\pi}\,x^{x-1/2}\mathrm{e}^{-x}\le \Gamma(x)$,
which implies 
$(x/\mathrm{e})^x\le \frac{1}{\sqrt{2\pi}} x^{-1/2}\Gamma(x+1)$. With $x=2l+\s-\theta$,
this gives the claimed bound, as $(2l+\s-\theta)^{-1/2}\le 1$.
\end{proof}
\begin{lemma}\label{lem:DtBound}
Assume \eqref{eq:TimeReg} with some $\varepsilon \in (0,1)$ and some $\dtime \geq 1$.
For $\s \in [0,2]$, there exists a constant $C>0$ (independent of $\dtime,$ $l$, $t$) such that,
for every $l\in \IN_0$ and $t>0$, 
we have
\be\label{eq:dtlu}
\| u^{(l)}(t) \|_{X_\s} 
\leq 
C \dtime^l\Gamma(l+1) 
\left( t^{(2-\s+\min\{r,\varepsilon\})/2} + \sum_{i=0}^{l-1} t^{-i-\s/2 + \varepsilon/2} \right) 
\;.
\ee
For $l=0$, this bound is valid without the sum.
\end{lemma}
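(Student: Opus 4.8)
The plan is to start from the differentiated Duhamel representation \eqref{eq:Tlg} and estimate its two constituents separately: the boundary sum $\sum_{i=0}^{l-1}\semigroup^{(i)}(t)g^{(l-1-i)}(0)$, which I expect to produce the sum $\sum_{i=0}^{l-1}t^{-i-\s/2+\varepsilon/2}$, and the Duhamel integral $\int_0^t\semigroup(s)g^{(l)}(t-s)\,\mathrm ds$, which I expect to produce the leading term $t^{(2-\s+\min\{\s,\varepsilon\})/2}$. Throughout, the forcing is pulled out in $X_\varepsilon$ via the analyticity hypothesis \eqref{eq:TimeReg}, which gives $\|g^{(l)}(t-s)\|_{X_\varepsilon}\le C\dtime^l\Gamma(l+1)$ and $\|g^{(l-1-i)}(0)\|_{X_\varepsilon}\le C\dtime^{l-1-i}\Gamma(l-i)$, so that only operator norms in $\cL(X_\varepsilon,X_\s)$ are needed. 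The case $l=0$ reduces to the integral term alone, matching the claim without the sum.

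For the integral term I would bound $\|\int_0^t\semigroup(s)g^{(l)}(t-s)\,\mathrm ds\|_{X_\s}\le C\dtime^l\Gamma(l+1)\int_0^t\|\semigroup(s)\|_{\cL(X_\varepsilon,X_\s)}\,\mathrm ds$ and split on the sign of $\s-\varepsilon$. If $\s\ge\varepsilon$, the smoothing bound following \eqref{eq:bstBound} gives $\|\semigroup(s)\|_{\cL(X_\varepsilon,X_\s)}\le Cs^{-(\s-\varepsilon)/2}$, which is integrable at $0$ precisely because $\s-\varepsilon<2$ (using $\s\le2$, $\varepsilon>0$), yielding $t^{1-(\s-\varepsilon)/2}=t^{(2-\s+\varepsilon)/2}$. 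If $\s<\varepsilon$, the decay estimate \eqref{eq:gstThetaGreaterR} gives $\|\semigroup(s)\|_{\cL(X_\varepsilon,X_\s)}\le\mu_1^{-(\varepsilon-\s)/2}\mathrm e^{-\mu_1 s}$, bounded uniformly, so the integral is $O(t)=O(t^{(2-\s+\s)/2})$. The two cases are exactly captured by the exponent $(2-\s+\min\{\s,\varepsilon\})/2$.

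For the boundary term the heart of the matter is to control $\semigroup^{(i)}(t)$ in $\cL(X_\varepsilon,X_\s)$ with a constant that collapses to $\Gamma(l+1)$ after multiplication by $\|g^{(l-1-i)}(0)\|_{X_\varepsilon}$. For every index with $2i+\s-\varepsilon\ge0$ (all $i$ if $\s\ge\varepsilon$; all $i\ge1$ if $\s<\varepsilon$) I would invoke Lemma~\ref{lem:Tt} with $\theta=\varepsilon$ and take square roots, obtaining a factor $(1/2)^{i}\sqrt{\Gamma(2i+1+\s-\varepsilon)}\,t^{-i-(\s-\varepsilon)/2}$. The single exceptional index $i=0$ in the regime $\s<\varepsilon$, where $2i+\s-\varepsilon<0$, is instead handled by \eqref{eq:gstThetaGreaterR} and contributes a term bounded in $t$. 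Combining $\|\semigroup^{(i)}(t)\|_{\cL(X_\varepsilon,X_\s)}$ with $\|g^{(l-1-i)}(0)\|_{X_\varepsilon}\le C\dtime^{l-1-i}(l-i-1)!$ and summing over $i$ then gives the asserted $\sum_{i=0}^{l-1}t^{-i-\s/2+\varepsilon/2}$, once the factorial bookkeeping is done.

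The main obstacle is precisely this bookkeeping: producing the clean prefactor $\dtime^l\Gamma(l+1)$ without a spurious geometric factor $C^l$. Two cancellations make this work. First, the dyadic factor $(1/2)^{i}$ arising from the $(1/2)^{2i+\s-\theta}$ in Lemma~\ref{lem:Tt} exactly offsets the growth $\sqrt{\Gamma(2i+1+\s-\varepsilon)}\le\sqrt{(2i+2)!}\le 2^{i+1}(i+1)!$, the last step resting on $\binom{2n}{n}\le4^n$; this yields $\|\semigroup^{(i)}(t)\|_{\cL(X_\varepsilon,X_\s)}\le C(i+1)!\,t^{-i-(\s-\varepsilon)/2}$. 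Second, the product of the two factorial sources is tamed by the elementary inequality $(i+1)!\,(l-i-1)!\le\bigl((i+1)+(l-i-1)\bigr)!=l!=\Gamma(l+1)$, together with $\dtime^{l-1-i}\le\dtime^l$ since $\dtime\ge1$. Care is also needed with the case split $\s\gtrless\varepsilon$ and with verifying the integrability threshold $\s-\varepsilon<2$ throughout.
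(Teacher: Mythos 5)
Your proposal follows the paper's proof essentially verbatim: the same decomposition via \eqref{eq:Tlg}, the same case split $\s\gtrless\varepsilon$ for the Duhamel integral using the smoothing bound and \eqref{eq:gstThetaGreaterR}, and the same factorial bookkeeping $(i+1)!\,(l-1-i)!\le\Gamma(l+1)$ for the boundary sum. The only cosmetic difference is that you absorb $\Gamma(2i+1+\s-\varepsilon)^{1/2}$ into $2^{i+1}(i+1)!$ via $\binom{2n}{n}\le 4^n$ where the paper invokes the Legendre duplication formula, and your explicit separate treatment of the index $i=0$ when $\s<\varepsilon$ is in fact slightly more careful than the paper's, which applies Lemma~\ref{lem:Tt} there even though that lemma is stated only for $\s\ge\theta$.
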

\begin{proof}
From \eqref{eq:Tlg}, we estimate for every $0<t\leq T$
\begin{multline*}
\| u^{(l)}(t) \|_{X_\s} 
\leq 
\sum_{i=0}^{l-1} \|\semigroup^{(i)}(t)\|_{\cL(X_\varepsilon,X_\s)} \| g^{(l-i-1)}(0) \|_{X_\varepsilon} \\
+
\int_0^t \|\semigroup(s)\|_{\cL(X_\varepsilon ,X_\s)}
\|g^{(l)}(t-s) \|_{X_\varepsilon}\mathrm{d}s.
\end{multline*}
To estimate the sum, we use \eqref{eq:T'TBd} with $\theta = \varepsilon$ and
assumption \eqref{eq:TimeReg} and obtain
\begin{align*}
  \sum_{i=0}^{l-1} &\|\semigroup^{(i)}(t)\|_{\cL(X_\varepsilon,X_\s)} \| g^{(l-i-1)}(0) \|_{X_\varepsilon} \\
  &\leq \sum_{i=0}^{l-1} C \left(\frac12\right)^{i+r/2-\varepsilon/2} \Gamma(2i+1+\s-\varepsilon)^{1/2} t^{-i-\s/2+\varepsilon/2}\dtime^{l-1-i}\Gamma(l-i) \\
  &\leq  C {\dtime}^{l-1} \sum_{i=0}^{l-1}\left(\frac12\right)^{i+r/2-\varepsilon/2} \Gamma(2i+1+\s-\varepsilon)^{1/2}\Gamma(l-i)\, t^{-i-\s/2+\varepsilon/2} \\
  &\leq C {\dtime}^{l-1} \sum_{i=0}^{l-1} \Gamma(i+1+\s/2-\varepsilon/2)\Gamma(l-i)\, t^{-i-\s/2+\varepsilon/2} \\
  &\leq C {\dtime}^{l-1} \Gamma(l+1) \sum_{i=0}^{l-1} t^{-i-\s/2+\varepsilon/2} \;,
\end{align*}
where in the third inequality we have used the duplication formula
  $\Gamma(z)\Gamma(z+1/2)=2^{1-2z}\sqrt{\pi}\Gamma(2z)$ with
  $z=(2i+1+r-\varepsilon)/2$, and the fourth inequality follows from
  $\max_{0\le i\le l-1}\Gamma(i+1+\s/2-\varepsilon/2)\Gamma(l-i)\le
    \max_{0\le i\le l-1}\Gamma(i+2)\Gamma(l-i)\le \Gamma(l+1)$.
\\
To estimate the integral term for $\varepsilon \leq \s\leq 2$, 
we use assumption \eqref{eq:TimeReg} with $\varepsilon \in (0,1)$
and \eqref{eq:T'TBd} with $l=0$, $\theta = \varepsilon > 0$ and
$\varepsilon \leq \s\leq 2$, and obtain, for every $l\in \IN_0$,
\begin{align*}
    \int_0^t \| \semigroup(s) \|_{\cL(X_\varepsilon, X_\s)} \| g^{(l)}(t-s) \|_{X_\varepsilon} \mathrm{d}s
    &\leq 
    C \dtime^l \Gamma(l+1) \int_0^t s^{-(\s-\varepsilon)/2} \mathrm{d}s  \\
    &=
    C C_\varepsilon \dtime^l \Gamma(l+1) t^{(2-\s+\varepsilon)/2}
\end{align*}
with $C_\varepsilon=2/(2-\s+\varepsilon)$.
It remains to estimate the integral term for $0 \leq \s \leq
\varepsilon$. In this case, for every $l \in \IN_0$, we have
\begin{align*}
    \int_0^t \| \semigroup(s) \|_{\cL(X_\varepsilon, X_\s)} \| g^{(l)}(t-s) \|_{X_\varepsilon} \mathrm{d}s
    &\leq 
    C \dtime^l \Gamma(l+1) \int_0^t \mu_1^{-(\varepsilon-\s)/2} \exp(-\mu_1 s) \mathrm{d}s  \\
    &\leq
    C \widetilde{C}_\varepsilon \dtime^l \Gamma(l+1) t\;,
\end{align*}
where the bound \eqref{eq:gstThetaGreaterR} is used
($\widetilde{C}_\varepsilon=\mu_1^{-(\varepsilon-\s+2)/2}$). This completes the proof of the assertion.
\end{proof}
\begin{remark}\label{rmk:s<2}
For $0\leq \s < 2$, the preceding result is valid under hypothesis 
\eqref{eq:TimeReg} with $\varepsilon = 0$, as used, e.g., in \cite{SS00_340}, 
but with $C(\s) \uparrow \infty$ as $\s \uparrow 2$.
\end{remark}
\begin{lemma} \label{lem:boundab}
Assume \eqref{eq:TimeReg} with some $\varepsilon \in (0,1)$ and some $\dtime \geq 1$. 
Let $u$ be the solution of \eqref{eq:IBVPVar}. For $T\geq b > a \geq 1,$ the estimate
\be \label{eq:L2abX2}
      \forall l \in \IN_0 : \quad \left(\int_a^b \| u^{(l)}(t) \|^2_{X_2} \mathrm dt\right)^{1/2}  \leq \dtime^l \Gamma(l+2) C(\varepsilon,a,b)
\ee
holds true with a constant $C(\varepsilon,a,b)>0$ independent of $l$ and $\dtime$. 
Furthermore, for $J=(0,T)$, we have $u \in H^1_{0,}(J;H)$ with
\be \label{eq:SobH1}
    \left(\int_0^T \| u(t) \|^2_H \mathrm dt\right)^{1/2}  
    \leq 
    C T^{3/2}, \; \left(\int_0^T \| u'(t) \|^2_H \mathrm dt\right)^{1/2} 
    \leq 
    C \delta \, (T^{1+\varepsilon}+T^3)^{1/2}
\ee
and $u \in L^2(J;X_2)$ with
\be \label{eq:L2X2} 
      \| u\|_{L^2(J;X_2)} \leq C T^{\varepsilon/2+1/2},
\ee
where the constant $C>0$ is independent of $\varepsilon$, $\dtime$ and $T.$
\end{lemma}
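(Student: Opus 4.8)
The plan is to read off all three estimates from the pointwise-in-time derivative bounds of Lemma~\ref{lem:DtBound} wherever the resulting power of $t$ is integrable and produces no $\varepsilon$-dependent constant, and to replace the pointwise bound by a direct spectral (convolution) argument for the single estimate \eqref{eq:L2X2}, where naive integration would blow up as $\varepsilon\downarrow0$.

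\emph{Estimate \eqref{eq:L2abX2}.} First I specialize Lemma~\ref{lem:DtBound} to $\s=2$; since $\varepsilon\in(0,1)$ we have $\min\{2,\varepsilon\}=\varepsilon$, so
\[
\|u^{(l)}(t)\|_{X_2}\leq C\dtime^l\Gamma(l+1)\Big(t^{\varepsilon/2}+\sum_{i=0}^{l-1}t^{-i-1+\varepsilon/2}\Big),\qquad t>0.
\]
On $[a,b]$ with $a\geq1$ each singular term is harmless: the exponent $-i-1+\varepsilon/2$ is negative and $t\geq1$, hence $t^{-i-1+\varepsilon/2}\leq1$, while $t^{\varepsilon/2}\leq b^{\varepsilon/2}$. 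The parenthesis is therefore bounded by $b^{\varepsilon/2}+l\leq(l+1)b^{\varepsilon/2}$, and absorbing $(l+1)$ via $\Gamma(l+2)=(l+1)\Gamma(l+1)$ gives the uniform bound $\|u^{(l)}(t)\|_{X_2}\leq C\dtime^l\Gamma(l+2)b^{\varepsilon/2}$ for all $t\in[a,b]$. Integrating over $(a,b)$ yields \eqref{eq:L2abX2} with $C(\varepsilon,a,b)=C\,b^{\varepsilon/2}(b-a)^{1/2}$. The essential observation is that the combinatorial factor $l$ from the finite sum is exactly what upgrades $\Gamma(l+1)$ to the claimed $\Gamma(l+2)$.

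\emph{Estimates \eqref{eq:SobH1}.} Here I take $\s=0$ in Lemma~\ref{lem:DtBound}, so that $\min\{0,\varepsilon\}=0$ and no $1/\varepsilon$ arises (the relevant integral term lies in the regime $\s\leq\varepsilon$, controlled by \eqref{eq:gstThetaGreaterR}). For $l=0$ this gives $\|u(t)\|_{H}\leq Ct$, hence $\int_0^T\|u(t)\|_H^2\,\dd t\leq C^2T^3/3$, the first bound (with no $\dtime$-factor, as $\dtime^0=1$); for $l=1$ it gives $\|u'(t)\|_{H}\leq C\dtime\,(t+t^{\varepsilon/2})$, and squaring and integrating produces $\int_0^T\|u'(t)\|_H^2\,\dd t\leq C\dtime^2(T^3+T^{1+\varepsilon})$, the second bound. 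Finally \eqref{eq:Duhamel} gives $u(0)=0$, and the two bounds place $u$ in $H^1(J;H)\hookrightarrow C(\overline J;H)$, so $u\in H^1_{0,}(J;H)$.

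\emph{Estimate \eqref{eq:L2X2}.} To avoid the $2/\varepsilon$ hidden in the pointwise $X_2$-bound I argue in the eigenbasis. With $g_i(t)=\langle g(t),\phi_i\rangle_H$ and $u_i(t)=\langle u(t),\phi_i\rangle_H$, \eqref{eq:Duhamel} decouples into $u_i(t)=\int_0^t\exp(-\mu_i(t-s))\,g_i(s)\,\dd s$, i.e.\ $u_i=k_i\ast g_i$ with $k_i(s)=\exp(-\mu_i s)$. Young's convolution inequality in its two forms, via $\|k_i\|_{L^1(0,\infty)}=\mu_i^{-1}$ and $\|k_i\|_{L^2(0,\infty)}=(2\mu_i)^{-1/2}$ together with $\|g_i\|_{L^1(0,T)}\leq T^{1/2}\|g_i\|_{L^2(0,T)}$, gives
\[
\mu_i^2\|u_i\|_{L^2(0,T)}^2\leq\|g_i\|_{L^2(0,T)}^2,\qquad \mu_i^2\|u_i\|_{L^2(0,T)}^2\leq\tfrac12\,\mu_i T\,\|g_i\|_{L^2(0,T)}^2.
\]
Interpolating these two bounds with weights $1-\varepsilon$ and $\varepsilon$ gives $\mu_i^2\|u_i\|_{L^2(0,T)}^2\leq\mu_i^\varepsilon T^\varepsilon\|g_i\|_{L^2(0,T)}^2$; summing over $i$ and using Fubini, the right-hand side equals $T^\varepsilon\int_0^T\|g(t)\|_{X_\varepsilon}^2\,\dd t\leq C^2T^{1+\varepsilon}$ by \eqref{eq:TimeReg} with $l=0$, while the left-hand side is $\|u\|_{L^2(J;X_2)}^2$. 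This is \eqref{eq:L2X2}, with constant depending only on the constant in \eqref{eq:TimeReg}, hence independent of $\varepsilon,\dtime,T$. I expect this last estimate to be the main obstacle: the direct route through Lemma~\ref{lem:DtBound} degrades like $1/\varepsilon$, and recovering the sharp $\varepsilon$-independent power $T^{(1+\varepsilon)/2}$ forces the spectral interpolation between the two Young estimates above.
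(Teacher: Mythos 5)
Your proof is correct, and for \eqref{eq:L2abX2} and \eqref{eq:SobH1} it coincides with the paper's (very terse) argument: both simply read off the pointwise bound \eqref{eq:dtlu} of Lemma~\ref{lem:DtBound} with $\s=2$ resp.\ $\s=0$ and integrate, the factor $l+1$ from the finite sum upgrading $\Gamma(l+1)$ to $\Gamma(l+2)$ exactly as you say. Where you genuinely depart from the paper is \eqref{eq:L2X2}: the paper again just invokes \eqref{eq:dtlu} with $r=2$, $l=0$, i.e.\ $\|u(t)\|_{X_2}\le C t^{\varepsilon/2}$, and integrates. That route does give the stated power $T^{(1+\varepsilon)/2}$, but the constant it produces inherits the factor $C_\varepsilon=2/(2-\s+\varepsilon)=2/\varepsilon$ from the integral term in the proof of Lemma~\ref{lem:DtBound} at $\s=2$, so it is not obviously independent of $\varepsilon$ as the lemma asserts. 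Your spectral argument — decoupling Duhamel's formula in the eigenbasis, applying Young's convolution inequality in its $L^1\ast L^2$ and $L^2\ast L^1$ forms, and interpolating the two resulting bounds with weights $1-\varepsilon$ and $\varepsilon$ — delivers the claimed $\varepsilon$-independence with a constant depending only on the constant in \eqref{eq:TimeReg}. It is slightly longer than the paper's one-liner but actually substantiates the $\varepsilon$-uniformity claim, which the paper's stated route does not; the paper's approach in turn is shorter and suffices if one is content with a constant depending on the (fixed) $\varepsilon$.
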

\begin{proof}
   The bound \eqref{eq:L2abX2} follows from \eqref{eq:dtlu}. For the estimates \eqref{eq:SobH1} and \eqref{eq:L2X2}, we use~\eqref{eq:dtlu} for $r=0$ with $l=0$ or $l=1$, and $r=2$ with $l=0$, respectively.
\end{proof}
\begin{proposition}\label{Prop:tExpBd}
  Assume \eqref{eq:TimeReg} with some $\varepsilon \in (0,1)$ and some $\dtime \geq 1$.
  For $r \in [0,2]$, there exists a constant $C>0$ (independent of $l$, $\dtime$, $a$, $b$, $t$, $q$) such that
  the solution $u$ of \eqref{eq:IBVPVar}
  satisfies
  \be\label{eq:dltuBd}
      \forall l \in \IN_0, \, \forall t \in (0, \min\{1,T\}] : \, \| u^{(l)}(t) \|_{X_\s} 
      \leq 
      C \dtime^l \Gamma(l+2) t^{-l+1-\s/2 + \varepsilon/2} \;,
  \ee
  and, for $0<a<b \leq \min\{1,T\}$,
  \be\label{eq:IntEst}
  \forall l \in \IN, l \geq 2 : \quad
  \left( \int_a^b \| u^{(l)}(t) \|^2_{X_\s} \mathrm dt \right)^{1/2} 
  \leq 
  C \dtime^{l} \Gamma(l+2) a^{-l+3/2-\s/2 + \varepsilon/2}\;,
  \ee
  and, for arbitrary $\rr\geq 2$,
  \be\label{eq:SobEst}
  \| u \|_{H^\rr((a,b);X_\s)} \leq C \dtime^{\rr} \Gamma(\rr+3) a^{-\rr+3/2-\s/2 + \varepsilon/2} \;.
  \ee
\end{proposition}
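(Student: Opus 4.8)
The plan is to derive all three bounds from the single pointwise estimate established in Lemma~\ref{lem:DtBound}, exploiting the hypothesis $r\in[0,2]$ so that the exponent $2-\s+\min\{r,\varepsilon\}$ simplifies. First I would prove the pointwise bound \eqref{eq:dltuBd}. Starting from \eqref{eq:dtlu} with the restriction $t\in(0,\min\{1,T\}]$, the key observation is that for $t\le 1$ every power $t^{-i-\s/2+\varepsilon/2}$ with $0\le i\le l-1$ is dominated by the largest-exponent term $t^{-(l-1)-\s/2+\varepsilon/2}=t^{-l+1-\s/2+\varepsilon/2}$, since smaller powers of a number $\le 1$ are larger. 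The $l$ summands in \eqref{eq:dtlu} thus contribute at most $l\,t^{-l+1-\s/2+\varepsilon/2}$, and the extra factor $l$ is absorbed by replacing $\Gamma(l+1)$ with $\Gamma(l+2)=(l+1)\Gamma(l+1)$. I would also check that the first term $t^{(2-\s+\min\{r,\varepsilon\})/2}$ in \eqref{eq:dtlu} is itself bounded by $t^{-l+1-\s/2+\varepsilon/2}$ for $l\ge 1$ (and handled separately for $l=0$, where the sum is absent and the single term carries the correct exponent). This yields \eqref{eq:dltuBd} with a $\dtime$-independent constant.

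Next I would obtain the integral estimate \eqref{eq:IntEst} by integrating the square of \eqref{eq:dltuBd} over $(a,b)$. Squaring gives an integrand proportional to $\dtime^{2l}\Gamma(l+2)^2 t^{-2l+2-\s+\varepsilon}$. For $l\ge 2$ the exponent $-2l+2-\s+\varepsilon$ is strictly less than $-1$ (indeed $\le -2-\s+\varepsilon<-1$ since $\varepsilon<1\le 2+\s$), so the integral $\int_a^b t^{-2l+2-\s+\varepsilon}\dd t$ is dominated by its lower endpoint contribution, of order $a^{-2l+3-\s+\varepsilon}$. Taking square roots produces exactly the power $a^{-l+3/2-\s/2+\varepsilon/2}$ claimed, with the constant absorbing the factor arising from $1/(2l-3+\s-\varepsilon)\le 1$ for $l\ge 2$.

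Finally, the Sobolev bound \eqref{eq:SobEst} follows by summing \eqref{eq:IntEst} over the derivative orders up to $\rr$. Writing $\|u\|_{H^\rr((a,b);X_\s)}^2=\sum_{l=0}^{\rr}\int_a^b\|u^{(l)}(t)\|_{X_\s}^2\dd t$, I would bound each term with $l\ge 2$ by \eqref{eq:IntEst}, and the finitely many low-order terms $l=0,1$ by \eqref{eq:dltuBd} integrated directly (or by \eqref{eq:L2abX2}/\eqref{eq:SobH1} of Lemma~\ref{lem:boundab}). Since $a<b\le 1$, the dominant power of $a$ among the summands is the most negative one, $a^{-\rr+3/2-\s/2+\varepsilon/2}$, so each term is bounded by $C\dtime^{2l}\Gamma(l+2)^2 a^{-2\rr+3-\s+\varepsilon}$. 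Using the monotonicity $\dtime\ge 1$ and $\Gamma(l+2)\le\Gamma(\rr+2)$ for $l\le\rr$, the sum over $l$ contributes a factor $(\rr+1)$, which together with $\Gamma(\rr+2)$ is absorbed into $\Gamma(\rr+3)=(\rr+2)\Gamma(\rr+2)$. Taking square roots yields \eqref{eq:SobEst}.

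The main obstacle I anticipate is the careful bookkeeping of gamma-function factors: making sure that the factor-of-$l$ losses incurred when collapsing the sum in \eqref{eq:dtlu}, and again when summing over derivative orders in \eqref{eq:SobEst}, are each absorbed by a single increment of the gamma argument ($\Gamma(l+1)\to\Gamma(l+2)$ and $\Gamma(l+2)\to\Gamma(\rr+3)$) rather than accumulating. This requires the crude but correct bounds $l+1\le\Gamma(l+2)/\Gamma(l+1)$ and the uniform dominance $\Gamma(l+2)\le\Gamma(\rr+2)$ for $l\le\rr$, and it is the place where the constant $C$ must be verified to stay independent of $l$, $\rr$, and $\dtime$.
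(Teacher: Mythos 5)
Your treatment of \eqref{eq:dltuBd} and \eqref{eq:IntEst} is exactly the paper's (unwritten) argument: the paper's proof consists of the three sentences ``follows from \eqref{eq:dtlu}'', ``obtained by integrating the pointwise bound'', ``follows by interpolation'', and your first two paragraphs are a faithful elaboration of the first two. Collapsing the sum in \eqref{eq:dtlu} onto its most singular term for $t\le 1$ and paying for the factor $l$ with $\Gamma(l+1)\to\Gamma(l+2)$ is the intended bookkeeping, and the integration step with exponent $-2l+2-\s+\varepsilon<-1$ for $l\ge 2$ is likewise correct (the prefactor is $1/(2l-3+\s-\varepsilon)\le 1/(1-\varepsilon)$ rather than $\le 1$, but the constant is allowed to depend on $\varepsilon$).

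The one place you genuinely deviate from the paper is \eqref{eq:SobEst}. The statement allows \emph{arbitrary} $\rr\ge 2$, not only integers, and the constant is required to be independent of $\rr$; accordingly the paper obtains \eqref{eq:SobEst} ``by interpolation''. Your summation $\|u\|_{H^\rr((a,b);X_\s)}^2=\sum_{l=0}^{\rr}\|u^{(l)}\|^2_{L^2((a,b);X_\s)}$ only makes sense for integer $\rr$; for non-integer $\rr$ you would still need to interpolate between the bounds at $\lfloor\rr\rfloor$ and $\lceil\rr\rceil$ (which reproduces $\dtime^{\rr}$ and the power of $a$ exactly, and controls the gamma factor up to an absorbable constant). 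So add that final interpolation step, or restrict to integer $\rr$.

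One further caution on a point you assert in passing: for $l=0$ the ``single term'' in \eqref{eq:dtlu} has exponent $(2-\s+\min\{\s,\varepsilon\})/2$, which equals the claimed $1-\s/2+\varepsilon/2$ only when $\s\ge\varepsilon$; for $\s<\varepsilon$ it gives $t^1$, which for $t\le 1$ does \emph{not} dominate $t^{1+(\varepsilon-\s)/2}$. This imprecision is inherited from the paper itself (whose proof is equally terse here), and is harmless for the sequel since \eqref{eq:dltuBd} is only ever invoked with $\s=2\ge\varepsilon$, but your claim that the $l=0$ case ``carries the correct exponent'' should be qualified to $\s\ge\varepsilon$.
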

\begin{proof}
  The bound \eqref{eq:dltuBd} follows from \eqref{eq:dtlu}.
  Estimate \eqref{eq:IntEst} is obtained by integrating the pointwise bound~\eqref{eq:dltuBd}.
  The Sobolev bound \eqref{eq:SobEst} follows by interpolation.
\end{proof}
For the proof of the exponential convergence rate of the 
space-time discretization proposed in this work, 
we need the following regularity result,  which is proven in Appendix~\ref{sec:ProofLemH12X2Reg}.
\begin{lemma}\label{lem:H12X2Reg}
Assume \eqref{eq:TimeReg} with some $\varepsilon \in (0,1)$ and some $\dtime \geq 1$ for $l=0,1$.
Then, for $b \in (0,T]$, the solution $u$ of~\eqref{eq:IBVPVar} belongs to 
$H^{1/2}_{0,}((0,b);X_2)$ and the estimate
  \[
    \normiii{u}_{H^{1/2}_{0,}((0,b);X_2)} \leq \sqrt[4]{\frac{2}{\pi}}\, \frac{1}{\varepsilon}\, b^{\varepsilon/2} 
    \left( \frac{b}{1+\varepsilon}   + \frac{3}{\varepsilon} + 
            \frac{4 b^2}{(\varepsilon +1)(\varepsilon + 2)}  
    \right)^{1/2} C_g 
  \]
  holds true,
with
\begin{equation}\label{eq:boundg}
C_g:=\| g \|_{W^{1,\infty}((0,b);X_\varepsilon)}=\max\left\{\sup_{0 \leq t \leq b} \norm{g(t)}_{X_\varepsilon},
    \sup_{0 \leq t \leq b} \norm{g'(t)}_{X_\varepsilon}
    \right\}.
\end{equation}
\end{lemma}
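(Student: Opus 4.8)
The plan is to estimate the triple norm $\normiii{u}_{H^{1/2}_{0,}((0,b);X_2)}$ directly from its definition \eqref{Sob:NormTriple}, namely as the sum of three contributions: the $L^2((0,b);X_2)$ norm, the Slobodetskii seminorm $|u|_{H^{1/2}((0,b);X_2)}$, and the Hardy-type weighted term $\int_0^b \|u(t)\|_{X_2}^2/t\,\mathrm dt$. The overall prefactor $\sqrt[4]{2/\pi}$ strongly suggests that the final step invokes the left-hand inequality of Lemma~\ref{lem:NormEquivalence} (with constant $C_{\mathrm{Int},1}$ absorbed, or rather that the triple norm is \emph{itself} what is being bounded, so no conversion back to the Fourier norm is needed). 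The workhorse input is the pointwise derivative bound \eqref{eq:dltuBd} of Proposition~\ref{Prop:tExpBd} specialized to $\s=2$ and to the two orders $l=0,1$, since the hypothesis is only imposed for $l=0,1$. For $\s=2$ these read $\|u(t)\|_{X_2}\le C\delta\,\Gamma(2)\,t^{\varepsilon/2}$ and $\|u'(t)\|_{X_2}\le C\delta\,\Gamma(3)\,t^{-1+\varepsilon/2}$, i.e. the solution vanishes like $t^{\varepsilon/2}$ as $t\downarrow 0$ and its derivative is integrable in $L^2$ near $0$ precisely because $\varepsilon>0$.

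First I would bound the zeroth term: $\int_0^b\|u(t)\|_{X_2}^2\,\mathrm dt \le (C\delta)^2\int_0^b t^{\varepsilon}\,\mathrm dt = (C\delta)^2\,b^{1+\varepsilon}/(1+\varepsilon)$, which produces the $b/(1+\varepsilon)$ summand (after factoring out the common $b^{\varepsilon}$). Second, the Hardy-type weighted term: $\int_0^b\|u(t)\|_{X_2}^2/t\,\mathrm dt \le (C\delta)^2\int_0^b t^{\varepsilon-1}\,\mathrm dt = (C\delta)^2\,b^{\varepsilon}/\varepsilon$, which is finite exactly because $\varepsilon>0$ and gives (part of) the $3/\varepsilon$ summand. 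Third, and most delicately, the Slobodetskii seminorm. Here the natural route is to write, for $s>t$,
\[
\|u(s)-u(t)\|_{X_2} = \Bigl\|\int_t^s u'(\xi)\,\mathrm d\xi\Bigr\|_{X_2} \le \int_t^s \|u'(\xi)\|_{X_2}\,\mathrm d\xi,
\]
and estimate the double integral
\[
|u|_{H^{1/2}((0,b);X_2)}^2 = \int_0^b\int_0^b \frac{\|u(s)-u(t)\|_{X_2}^2}{|s-t|^2}\,\mathrm ds\,\mathrm dt
\]
using the derivative bound $\|u'(\xi)\|_{X_2}\le 2C\delta\,\xi^{-1+\varepsilon/2}$. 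The monotone singularity of $u'$ at the origin is what makes the remaining summands $4b^2/((\varepsilon+1)(\varepsilon+2))$ and the remaining part of $3/\varepsilon$ appear, and the factor $1/\varepsilon^2$ in the final bound (note the outer $1/\varepsilon$ combined with the $1/\varepsilon$ inside the parenthesis) confirms that the seminorm estimate is where two powers of $\varepsilon^{-1}$ are generated.

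The main obstacle will be the Slobodetskii double integral: controlling $\int_0^b\int_0^b |s-t|^{-2}\bigl(\int_t^s\xi^{-1+\varepsilon/2}\mathrm d\xi\bigr)^2\,\mathrm ds\,\mathrm dt$ cleanly. The exponent $-2$ in the kernel is critical, so naive Cauchy--Schwarz on the inner integral loses the borderline integrability; instead one must exploit the algebraic cancellation $\int_t^s\xi^{-1+\varepsilon/2}\mathrm d\xi = \tfrac{2}{\varepsilon}(s^{\varepsilon/2}-t^{\varepsilon/2})$ and then estimate $\int_0^b\int_0^b (s^{\varepsilon/2}-t^{\varepsilon/2})^2/|s-t|^2\,\mathrm ds\,\mathrm dt$ by splitting the square region along the diagonal and/or invoking Lemma~\ref{lem:FractionalNormPointtau} to localize, or by comparing with a beta-function integral after the substitution $t=\rho s$. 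I expect the explicit constants $1/(\varepsilon+1)$, $1/(\varepsilon+2)$, and the isolated $3/\varepsilon$ to emerge from carefully bookkeeping these elementary but singular one- and two-dimensional integrals; assembling the three contributions under a common $b^{\varepsilon/2}$ and taking square roots then yields the stated bound with $C_g$ replacing $C\delta$ via the definition \eqref{eq:boundg} (since the constant $C$ from \eqref{eq:dltuBd} carries the $X_\varepsilon$-norms of $g$ and $g'$).
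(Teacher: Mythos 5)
Your plan follows the same skeleton as the paper's proof in Appendix~\ref{sec:ProofLemH12X2Reg} --- estimate the three pieces of the triple norm \eqref{Sob:NormTriple} separately, with the $L^2$ and Hardy terms coming from $\|u(t)\|_{X_2}\lesssim t^{\varepsilon/2}$ and the Slobodetskii term reduced to $\int_0^b\int_0^s (s^{\varepsilon/2}-t^{\varepsilon/2})^2(s-t)^{-2}\,\mathrm dt\,\mathrm ds$, which the paper indeed evaluates via the substitution $t=rs$ and the elementary bound $1-r\ge 1-r^{\varepsilon/2}$. Where you genuinely diverge is in the source of the pointwise bounds: the paper does \emph{not} route through Proposition~\ref{Prop:tExpBd}, but works directly from the Duhamel formula \eqref{eq:Duhamel} together with the explicit semigroup bound $\norm{\semigroup(t)}_{\cL(X_\varepsilon,X_2)}\le (8\pi)^{-1/4}t^{-1+\varepsilon/2}$ (Lemma~\ref{lem:Tt} with $l=0$ plus logarithmic convexity of $\Gamma$ --- this, not Lemma~\ref{lem:NormEquivalence}, is where the prefactor $\sqrt[4]{2/\pi}$ originates). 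For the seminorm, the paper splits $u(s)-u(t)$ into $\int_t^s\semigroup(\tau)g(s-\tau)\,\mathrm d\tau+\int_0^t\semigroup(\tau)[g(s-\tau)-g(t-\tau)]\,\mathrm d\tau$ and uses $g'\in L^\infty((0,b);X_\varepsilon)$ on the second piece, which is what produces the extra summand $t^{\varepsilon/2}(s-t)$ and hence the term $4b^2/((\varepsilon+1)(\varepsilon+2))$; your route via $\|u(s)-u(t)\|_{X_2}\le\int_t^s\|u'(\xi)\|_{X_2}\,\mathrm d\xi$ is also viable and correctly isolates the key cancellation $\int_t^s\xi^{-1+\varepsilon/2}\,\mathrm d\xi=\tfrac2\varepsilon(s^{\varepsilon/2}-t^{\varepsilon/2})$.

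Two caveats keep your plan from delivering the lemma exactly as stated. First, the constants in Lemma~\ref{lem:DtBound} and Proposition~\ref{Prop:tExpBd} are not explicit (they absorb Stirling and duplication-formula factors), so you can only conclude $\normiii{u}_{H^{1/2}_{0,}((0,b);X_2)}\le C(b,\varepsilon)\,C_g$ with an unspecified $C(b,\varepsilon)$, not the explicit constant in the statement; recovering it requires redoing the estimates from Duhamel as the paper does. Second, the bounds \eqref{eq:dltuBd} and \eqref{eq:IntEst} you invoke are stated only for $t\in(0,\min\{1,T\}]$, whereas the lemma allows $b\in(0,T]$ with $T>1$; for $t\ge 1$ you would have to fall back on \eqref{eq:dtlu} (where the $t^{\varepsilon/2}$ summand dominates) or treat $[1,b]$ separately. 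Neither issue is fatal to the qualitative conclusion $u\in H^{1/2}_{0,}((0,b);X_2)$, but both would need to be addressed to match the stated estimate.
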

\begin{remark}
  The assertion of Lemma~\ref{lem:H12X2Reg} can be generalized. For this purpose, define the interpolation space $H^{\theta}_{0,}(a,b) := (H^1_{0,}(a,b),L^2(a,b))_{\theta,2}$ for $\theta \in [1/2,1]$ with the usual Slobodetskii norm $\normiii{\circ}_{H^{\theta}(a,b)}$ as in \cite[p.~74]{McLean2000}, where $a<b$, $a,b \in \IR.$ Then, under the assumption of Lemma~\ref{lem:H12X2Reg}, we have
  \begin{equation*}
    \normiii{u}_{H^{\theta}_{0,}((0,b);X_2)} \leq C(b,\varepsilon,\theta, g)
  \end{equation*}
  for $\theta \in [1/2,1/2+\varepsilon) \cap [1/2,1]$ with a constant $C(b,\varepsilon,\theta, g)>0$.
\end{remark}

\subsection{Spatial Regularity}
\label{sec:SpReg}
We elaborate here on the regularity of the solution with respect to the 
spatial variable $x\in \domain$. For \eqref{eq:IBVP}, this regularity is,
of course, dependent on the temporal variable $t$, and the spaces $X_\s$ 
defined in \eqref{eq:Xs} via eigensystems, which are intrinsic to the spatial
operator \eqref{eq:xPbm} with \eqref{eq:Acoerc}, play a prominent role. 
In order to leverage spatial approximation results, 
we relate these spaces to 
standard ($d=1$) or corner-weighted ($d\ge 2$) Sobolev spaces.
As we shall consider in detail only $\IP^1$-Lagrangian FEM approximation 
in $\domain$, for the ensuing convergence rate analysis in Section
\ref{sec:Approx} we are mainly interested in the spaces 
$X_\s$ for $\s=0,1,2$ as defined in \eqref{eq:Xs}.
The cases $0\leq \s \leq 1$ coincide with standard Sobolev spaces
endowed with equivalent norms.
\begin{proposition}\label{prop:H0H1}
For space dimension $d\geq 2$, assume that
$\domain\subset \IR^d$ is a bounded Lipschitz domain.
Assume further that $A\in L^\infty(\domain;\IR^{d\times d}_{\mathrm{sym}})$ 
is uniformly positive definite 
in the sense that \eqref{eq:Acoerc} is satisfied.
Then, 
$X_0 = L^2(\domain)$ and $X_1 \simeq 
H^1_{\Gamma_D}(\domain)$ 
and for $0<\s<1$, 
$X_\s \simeq (L^2(\domain), H^1_{\Gamma_D}(\domain))_{\s,2}$.
\end{proposition}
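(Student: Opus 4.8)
The plan is to identify $X_\s$ for $\s \in \{0,1\}$ directly from the definition \eqref{eq:Xs} and the eigensystem, and then handle the intermediate range $0<\s<1$ by a characterization of $X_\s$ as an interpolation space. The starting point is the observation already recorded in the excerpt: the eigenfunctions $\{\phi_i\}_{i\ge1}$ form an orthonormal basis of $H = L^2(\domain)$, so $X_0 = L^2(\domain)$ is immediate, and $\|v\|_{X_1}^2 = \sum_i \mu_i |v_i|^2 = a(v,v)$ is the energy norm on $V = H^1_{\Gamma_D}(\domain)$. Under the hypotheses $|\Gamma_D|>0$ (for $d\ge2$) and uniform positive definiteness \eqref{eq:Acoerc}, the coercivity estimate stated just after \eqref{eq:Acoerc} together with the upper bound $a(v,v) \le \|A\|_{L^\infty}\|\nabla_x v\|_{L^2}^2$ shows that $a(\circ,\circ)^{1/2}$ is equivalent to the $H^1(\domain)$-norm on $V$, giving $X_1 \simeq H^1_{\Gamma_D}(\domain)$.

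\medskip

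First I would make precise the abstract framework: the operator $A(\partial_x)$ with the mixed boundary conditions is a self-adjoint, positive, unbounded operator on $H$ with compact resolvent, whose form domain is $V = H^1_{\Gamma_D}(\domain)$ and whose square root $A^{1/2}$ has domain $X_1 = V$. In this language $X_\s$ is exactly the domain $\domain(A^{\s/2})$ equipped with the graph norm (up to the constant shift, which is harmless since $\mu_1>0$). The key structural fact I would invoke is the abstract interpolation identity for fractional powers of a positive self-adjoint operator: for $0<\s<1$,
\be
X_\s = [X_0, X_1]_{\s} = (X_0, X_1)_{\s,2},
\ee
where $[\cdot,\cdot]_\s$ is complex interpolation and the identification of complex with real interpolation (fine index $2$) between two Hilbert spaces is standard. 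This is the classical result (see, e.g., Lions--Magenes) that for the square root of a self-adjoint operator, the domain of $A^{\s/2}$ coincides with the interpolation space halfway between $H$ and $\domain(A^{1/2})$; the spectral-theoretic proof is a direct computation with the $K$-functional against the spectral measure.

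\medskip

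The remaining step is to transfer this abstract interpolation identity to the concrete spaces by interpolating the isomorphisms established for the endpoints. Since $X_0 = L^2(\domain)$ with equal norm and $X_1 \simeq H^1_{\Gamma_D}(\domain)$ with equivalent norms, functoriality of the real interpolation functor $(\cdot,\cdot)_{\s,2}$ under the identity map yields
\be
X_\s = (X_0,X_1)_{\s,2} \simeq (L^2(\domain), H^1_{\Gamma_D}(\domain))_{\s,2},
\ee
with equivalence constants depending only on the $H^1$-norm equivalence constants, hence on $a_{\min}$, $\|A\|_{L^\infty}$, and the Poincar\'e constant. I expect the main obstacle to be the careful justification of the abstract interpolation identity $\domain(A^{\s/2}) = (H, \domain(A^{1/2}))_{\s,2}$ rather than a weaker embedding. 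One must verify that the spectral-measure computation of the $K$-functional genuinely reproduces the weighted $\ell^2$ norm $\sum_i \mu_i^\s |v_i|^2$ defining $X_\s$, and that the compatibility of complex and real interpolation at fine index $2$ in the Hilbert-space setting applies verbatim. The endpoint identifications and the coercivity bounds are routine; the real content is this spectral interpolation lemma, which I would cite from the standard interpolation-theory literature rather than reprove.
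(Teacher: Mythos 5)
Your proposal is correct. The paper states Proposition~\ref{prop:H0H1} without proof, treating it as a classical fact (it remarks only that ``the cases $0\leq \s\leq 1$ coincide with standard Sobolev spaces endowed with equivalent norms''), and your argument --- identifying $X_0=L^2(\domain)$ and $X_1=H^1_{\Gamma_D}(\domain)$ via coercivity/boundedness of $a(\circ,\circ)$, invoking the spectral interpolation identity $\domain(A^{\s/2})=(H,\domain(A^{1/2}))_{\s,2}$ for the positive self-adjoint operator with compact resolvent, and transferring by functoriality of $(\cdot,\cdot)_{\s,2}$ --- is precisely the standard justification the paper implicitly relies on.
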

Consider next $1<\s\leq 2$. 
Once we characterize $X_2$, for $1<\s<2$, 
$X_\s$ is characterized by real interpolation.
To characterize $X_2$, 
we consider the source diffusion problem \eqref{eq:xPbm}, 
with assumption \eqref{eq:Acoerc} in place. 
In addition, we assume
\be\label{ass:ALip}
f\in L^2(\domain),\;\; A\in W^{1,\infty}(\domain;\IR^{d\times d}_{\mathrm{sym}}).
\ee
Then, 
eigenfunction expansions of $f\in L^2(\domain)$ 
imply that the unique solution $u\in X_1$ of \eqref{eq:xPbm}
belongs to $X_2$. 
Furthermore, the solution operator is bijective,
since from~\eqref{eq:Xs} and \eqref{ass:ALip} it follows that
\be\label{eq:X2L2}
\| u \|_{X_2}^2 
= \sum_{k=1}^\infty \mu_k^2 |u_k|^2 
= \| A(\partial_x) u \|_H^2
= \sum_{k=1}^\infty |f_k|^2 
= \| f \|^2_{L^2(\domain)}.
\ee
It remains to relate the space 
$X_2$, which is  defined in terms of the spatial operator $A(\partial_x)$,
to an intrinsic function space in $\domain$.
Due to \eqref{eq:X2L2}, $X_2 = (A(\partial_x))^{-1} L^2(\domain)$.
To characterize elements in $X_2$, 
we use the elliptic regularity of the BVP \eqref{eq:xPbm}
with time-independent data $f \in L^2(\domain)$ in
standard (if $d=1$) 
or 
corner-weighted (if $d\ge 2$)
Sobolev spaces in $\domain\subset \IR^d$. 

\subsubsection{{\bf Case $d=1$}}
The spatial domain $\domain$ is an open, bounded and connected interval,
and, by \eqref{ass:ALip}, the diffusion coefficient is a scalar 
$a \in W^{1,\infty}(\domain)$ such that \eqref{eq:Acoerc} is satisfied.

Standard elliptic regularity results imply that 
there exists a constant $c>0$ such that, for every 
$f\in L^2(\domain)$, the solution $u=A(\partial_x)^{-1}f$ belongs to
$H^2(\domain)$ and satisfies 
$\| v \|_{H^2(\domain)} \leq c \| f \|_{L^2(\domain)}$.
This, combined with~\eqref{eq:X2L2}, 
gives that $X_2\subset H^2(\domain)$ and
\begin{equation}\label{eq:X2d=1}
\forall v\in X_2 : \quad 
\| v \|_{H^2(\domain)} \leq c \| v \|_{X_2}\;.
\end{equation}
\begin{remark}\label{rem:1Dtransmission}
For $d=1$, a continuous embedding of $X_2$ into a nonintrinsic
function space can be easily established also for transmission problems.
Assume $\domain$ to be partitioned into $n_{\rm sub}$ disjoint, open and connected
subintervals $\calD = \{\domain_i\}_{i=1}^{n_{\rm sub}}$ 
and denote the corresponding broken Sobolev spaces
$W^{1,\infty}(\calD)=\{ a\in L^\infty(\domain):
a_{\mid_{\domain_i}} \in W^{1,\infty}(\domain_i), \; i=1,\dots, n_{\rm sub} \}$
and 
$H^2(\calD) := \{ v\in H^1(\domain): v_{\mid_{\domain_i}} \in
H^2(\domain_i), \; i=1,\dots, n_{\rm sub} \}$.
We set
$\|v\|_{H^2(\calD)}^2:=\|v\|_{H^1(\domain)}^2+\sum_{i=1}^{n_{\rm sub}} |v|_{H^2(\domain_i)}^2$.
We assume that the diffusion coefficient $a$ belongs to
$W^{1,\infty}(\calD)$ and satisfies
\eqref{eq:Acoerc}.
In this case,
standard elliptic regularity results imply that 
there exists a constant $c>0$ such that, for every $f\in
  L^2(\domain)$, $u=A(\partial_x)^{-1}f\in H^2(\calD)$ and $\| v \|_{H^2(\calD)} \leq c \| f
  \|_{L^2(\domain)}$.
This, combined with~\eqref{eq:X2L2}, gives $X_2\subset
H^2(\calD)$ and~\eqref{eq:X2d=1} is valid with $\| v \|_{H^2(\calD)}$ on the left side.
\end{remark}

\subsubsection{{\bf Case $d=2$}} \label{Sec:P1:d2}
Under \eqref{ass:ALip}, for polygonal domains $\domain \subset \IR^2$,
weak solutions of the source problem \eqref{eq:xPbm}
are known to belong to a weighted Sobolev space of Kondrat'ev type 
which is defined as follows.
\begin{definition}[Kondrat'ev Spaces in dimension $d=2$] \label{def:Kma}
Assume that $\domain\subset \IR^2$ is a bounded polygonal domain
with $\geq 3$ corners and straight sides, 
whose boundary $\partial \domain$ is Lipschitz.

Denote by
$r_\domain:\domain\to \IR_{\ge 0}$
a smooth function that locally,
in a (sufficiently small) open neighborhood
of each corner of $\domain$, 
coincides with the Euclidean distance to that corner. 
Then, for $m\in \IN_0$ and for some constant $a>0$, 
the Kondrat'ev corner-weighted Sobolev space $\cK^m_a(\domain)$ 
is defined as
\be\label{eq:DefKma}
\cK^m_a(\domain) 
:= 
\left\{ v \colon \, \domain\to\IR : \; \forall |\alpha|\leq m : \, 
   r_\domain^{|\alpha|-a} \partial^\alpha v \in L^2(\domain) \right\}
   \;,
\ee
with 
$\| u \|_{\cK^m_{a}(\domain)}^2
  :=
   \sum_{|\alpha|\le m}\| r_\domain^{|\alpha|-a} \partial^\alpha v\|_{L^2(\domain)}^2$.
\end{definition}
The regularity result in question is a special case of 
\cite[Thm. 4.4]{BLN2017}, which we state here for definiteness in the form
required by us.
\begin{proposition}\label{prop:BLN2017}
Assume that $\domain \subset \IR^2$ is a bounded polygon 
with boundary $\partial\domain$ consisting of a finite number of
straight sides. 
Consider the elliptic source problem \eqref{eq:xPbm}
with assumptions \eqref{eq:Acoerc} and \eqref{ass:ALip} in place.

Then, there exist $c>0$ and a constant $a>0$ such that, 
for every $f\in L^2(\domain)$, 
the weak solution $u \in X_1 = H^1_{\Gamma_D}(\domain)$ 
of \eqref{eq:xPbm} 
belongs to $\cK^2_{a+1}(\domain)$ and 
satisfies the a~priori estimate
\be\label{eq:X2apriori}
\| u \|_{\cK^2_{a+1}(\domain)} \leq c \| f \|_{L^2(\domain)}. 
\ee
In particular, therefore,  $X_2 \subset \cK^2_{a+1}(\domain)$ 
and there exists $c>0$ such that
\be\label{eq:X2K2a}
\forall v \in X_2: \quad \| v \|_{\cK^2_{a+1}(\domain)} \leq c \| v \|_{X_2} 
\;.
\ee
\end{proposition}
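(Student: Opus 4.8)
The plan is to establish the final statement, Proposition~\ref{prop:BLN2017}, by invoking the weighted elliptic regularity theory for second-order divergence-form operators on polygons and then transferring the resulting a~priori bound to the intrinsic space $X_2$ via the operator identity \eqref{eq:X2L2}. First I would recall that, under the assumptions \eqref{eq:Acoerc} and \eqref{ass:ALip}, the weak solution $u\in H^1_{\Gamma_D}(\domain)$ of the source problem \eqref{eq:xPbm} satisfies the shift theorem of Kondrat'ev type stated in \cite[Thm.~4.4]{BLN2017}: for some weight exponent $a>0$ determined by the interior angles at the corners of $\domain$ (and the type of boundary condition imposed on each side), one has $u\in\cK^2_{a+1}(\domain)$ together with the estimate \eqref{eq:X2apriori}, $\|u\|_{\cK^2_{a+1}(\domain)}\le c\|f\|_{L^2(\domain)}$. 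The crux is that the operator $A(\partial_x)=-\nabla_x\cdot(A(x)\nabla_x)$ with merely $W^{1,\infty}$ coefficients still enjoys the $L^2$-to-$\cK^2_{a+1}$ regularity shift, because the singularities of the solution are governed by the leading-order frozen-coefficient operator at each corner, and the Lipschitz perturbation of the coefficients is subordinate in the weighted scale.

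Second, I would convert the a~priori bound \eqref{eq:X2apriori} into the desired embedding. By \eqref{eq:X2L2}, the map $A(\partial_x)\colon X_2\to L^2(\domain)$ is an isometry (in the $X_2$-norm on the domain side), so that for every $v\in X_2$, setting $f:=A(\partial_x)v\in L^2(\domain)$ gives $\|f\|_{L^2(\domain)}=\|v\|_{X_2}$. Since $v=A(\partial_x)^{-1}f$ is precisely the weak solution of \eqref{eq:xPbm} with this right-hand side, applying \eqref{eq:X2apriori} yields $\|v\|_{\cK^2_{a+1}(\domain)}\le c\|f\|_{L^2(\domain)}=c\|v\|_{X_2}$, which is exactly \eqref{eq:X2K2a}. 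The inclusion $X_2\subset\cK^2_{a+1}(\domain)$ follows at once from the finiteness of this bound together with the density of eigenfunction expansions in $X_2$.

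The main obstacle I anticipate is not in the transfer argument, which is essentially algebraic once the operator identity \eqref{eq:X2L2} is in hand, but in verifying that the hypotheses of \cite[Thm.~4.4]{BLN2017} are genuinely met by the present setting. Specifically, one must check that the mixed Dirichlet--Neumann boundary conditions encoded in $H^1_{\Gamma_D}(\domain)$ and the conormal trace $\gamma_1$ are admissible in that reference's framework, and that the weight exponent $a>0$ can be chosen uniformly so that the corner singularity exponents strictly exceed $a$ at every corner---this is what guarantees $\cK^2_{a+1}$ membership with a \emph{single} weight rather than a corner-dependent multi-weight. The extraction of $a$ depends delicately on the interior angles and on whether each corner is of Dirichlet, Neumann, or mixed type; at mixed corners the admissible range of $a$ can be quite restrictive. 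Since the proposition only asserts \emph{existence} of such $a>0$ and $c>0$ without quantifying them, I would handle this by citing the reference for the existence of an admissible weight and remarking that $a$ may be taken smaller than the minimal corner singularity exponent over all corners, leaving the precise optimal value implicit.
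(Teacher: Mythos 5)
Your proposal is correct and follows essentially the same route as the paper: cite \cite[Thm.~4.4]{BLN2017} for the weighted shift theorem and then transfer the bound to $X_2$ via the identity \eqref{eq:X2L2}. The only detail you gloss over is that the cited theorem bounds $\| u \|_{\cK^2_{a+1}(\domain)}$ by $\| f \|_{\cK^0_{a-1}(\domain)}$ rather than by $\| f \|_{L^2(\domain)}$, so one must additionally pick $a\in(0,1)$ to make the weight $r_\domain^{-(a-1)}$ bounded and hence $L^2(\domain)\hookrightarrow \cK^0_{a-1}(\domain)$ --- a one-line observation that the paper makes explicit.
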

\begin{proof}
  Assumption \eqref{ass:ALip} implies that $A \in \cW^{1,\infty}(\domain)$ 
as defined in \cite[Eqn. (5)]{BLN2017}, 
and that 
$\| A \|_{\cW^{1,\infty}(\domain)} \leq C(\domain) \| A \|_{W^{1,\infty}(\domain)}$.
We may then use \cite[Thm. 4.4]{BLN2017} with $b_i = c =0$, $m=1$, to conclude
the \emph{a~priori} estimate
\[
\| u \|_{\cK^2_{a+1}(\domain)} \leq c \| f \|_{\cK^0_{a-1}(\domain)}
\]
for all $|a|<\eta$ for some (sufficiently small) $\eta > 0$. 
We assume, without loss of generality, that $0 < \eta < 1$. 
Then, definition \eqref{eq:DefKma} states that
$f\in \cK^0_{a-1}(\domain)$  means $r_\domain^{-(a-1)}f\in L^2(\domain)$.
As $-(a-1) > 0$, $r_\domain^{-(a-1)}\in L^\infty(\domain)$, so that 
$\| f \|_{\cK^0_{a-1}(\domain)} \leq c(a,\domain) \| f \|_{L^2(\domain)}$.
The \emph{a~priori} estimate implies then \eqref{eq:X2apriori}.
Since $\| f \|_{L^2(\domain)} =\| u \|_{X_2}$
(see \eqref{eq:X2L2}),
the \emph{a~priori} estimate also implies \eqref{eq:X2K2a}.
\end{proof}
\begin{remark}\label{rmk:Transm2d}
For transmission problems in a polygonal domain $\domain$,
with \emph{piecewise constant, isotropic coefficients}
in materials occupying a finite number $n_{\rm sub}$ of 
polygonal subdomains $\domain_i\subset \domain$,
regularity in the weighted spaces $\cK^2_{a+1}(\domain)$ with
radial weights also at multi-material intersection points in 
$\domain$ are stated in \cite[Theorem~3.7]{FEMTransmission}.
The assumptions in~\cite{FEMTransmission} on $A$ 
are more restrictive than just~\eqref{eq:Acoerc} and 
$A\in W^{1,\infty}(\calD;\IR^{d\times d}_{\mathrm{sym}})$.
The regularity result in \cite[Theorem~3.7]{FEMTransmission}
with $m=1$ will imply for $u\in X_2$ a splitting $u=u_{\mathrm{reg}}+w_s$,
with the bound \eqref{eq:X2apriori} for $u_{\mathrm{reg}}|_{\domain_i}$ 
on each subdomain $\domain_i$, 
and with $w_s$ in a finite-dimensional space $W_s$,
  see~\cite[Sect.~3.2]{FEMTransmission}.
\end{remark}

\subsubsection{{\bf Case $d=3$}}
\label{sec:Regd=3}
Proposition \ref{prop:BLN2017} remains valid in space dimension $d=3$. 
To detail a precise statement, we still assume \eqref{ass:ALip}.
Then, \cite[Theorem 1.1]{AmannNistor08} implies \eqref{eq:X2apriori} 
and \eqref{eq:X2K2a} in bounded, polyhedral domains $\domain\subset \IR^3$
with Lipschitz boundary $\partial\domain$ consisting of a finite number
of plane faces. Similar results are shown in \cite{MR3d} and,
for the Poisson equation with $\Gamma = \Gamma_D$, in \cite[Theorem 1.2]{BNZ2005}
(with $\mu=1$ in the statement of that theorem).

\section{Approximation}
\label{sec:Approx}
We introduce the spatial and temporal (quasi-) interpolation
operators that shall allow us to deduce convergence rates of the space-time
variational approximation of formulation \eqref{eq:IBVPVar}. 
In order to use the tensor product construction of subspaces in~\eqref{eq:xtApprSpc},
we specify the choice of temporal subspaces $V^M_t\subset H^{1/2}_{0,}(J)$ for the temporal domain $J=(0,T)$.
In the spatial domain $\domain$, 
$V^N_x \subset H^1_{\Gamma_D}(\domain)$ will be specified in Section \ref{Sec:xAppr}
below.

\subsection{$hp$-Approximation in $\overline{J}=[0,T]$}
\label{sec:hpApprJ}
To specify the $hp$-subspace $V^M_t\subset H^{1/2}_{0,}(J)$ in~\eqref{eq:xtApprSpc}, 
we fix the geometric subdivision
parameter $\sigma \in (0,1)$ and the number of elements $m:= m_1 + m_2 \in \IN$ 
with given $2<m_1 \in \IN$, $m_2 \in \IN_0$. 
We set $T_1 := \min \{1,T\}$. 
Then, we define the time steps by
\be \label{def:tj}
  t_j := \begin{cases}
            0,                                      & j=0,  \\
            T_1 \sigma^{m_1-j},                     & j \in \{ 1,\dots, m_1 \}, \\
            \frac{T-T_1}{m_2} \cdot (j-m_1) + T_1,  & j \in \{ m_1+1, \dots, m_1+m_2 \}, \quad \text{ if } m_2 > 0,
        \end{cases}
\ee
where the last line is omitted in the case $T_1=T$, i.e., we assume $m_2=0$ whenever $T_1=T$. 
Furthermore, 
we denote by $I_j = (t_{j-1},t_j)\subset J$ the corresponding time intervals of lengths 
$k_j := |I_j| = t_j - t_{j-1}$, fulfilling
\be \label{def:kj}
  k_j = \begin{cases}
          T_1\sigma^{m_1-1},            & j=1,  \\
          T_1\sigma^{m_1-j}(1-\sigma),  & j \in \{ 2,\dots, m_1 \}, \\
          k_T:= \frac{T-T_1}{m_2},      & j \in \{ m_1+1, \dots, m_1+m_2 \},  \quad \text{ if } m_2 > 0.
        \end{cases}
\ee
Note that the splitting of $\overline{J}=[0,T]$ into the parts $[0,T_1]$ and $[T_1,T]$ 
is necessary for the proofs of the $hp$-error estimate in Section~\ref{sec:ConvRate}, 
since Proposition~\ref{Prop:tExpBd} states estimates for $b \leq T_1 = \min\{1,T\}$ only. 
In other words, we apply the temporal $hp$-FEM in $[0,T_1]$, 
whereas in $[T_1,T]$ we use a temporal $p$-FEM in the case $T>1$. 
With this notation, 
we define a geometric partition $\cG^m_\sigma = \{I_j\}_{j=1}^m$ of $J=(0,T).$ On $\cG^m_\sigma$, 
we introduce the distribution $\bmp = (p_1,\dots,p_m) \in \IN^m$
of polynomial degrees
as follows: For a given slope parameter $\muhp \in \IR$, $\muhp \geq 1$, we set
\be \label{def:pj}
  p_j := \begin{cases}
          1, & j=1, \\
          \lfloor \muhp j \rfloor,    & j \in \{ 2,\dots,m_1 \}, \\
          p_T:= \lfloor \muhp m_1 \rfloor,  & j \in \{ m_1+1, \dots, m_1+m_2 \}, \quad \text{ if } m_2 > 0,
        \end{cases}
\ee
where $\lfloor \circ \rfloor$ denotes the floor function. 
Again, in the case $m_2=0$, the last line is omitted.
Thus, we set
$S^{\bmp,1}(J;\cG^m_\sigma) := \{ v \in C^0(\overline{J}): v_{\mid_{I_j}} \in \IP^{p_j} \},$
and the temporal subspace $V^M_t$ in \eqref{eq:xtApprSpc}
is defined as 
\be\label{eq:VMt} 
S^{\bmp,1}_{0,}(J;\cG^m_\sigma) 
:= 
\{ v \in S^{\bmp,1}(J;\cG^m_\sigma): v(0) = 0 \} 
\subset 
H^{1/2}_{0,}(J)
\;.
\ee
Due to the continuity requirement at $t_j$ for $j=1,\dots,m-1$, which 
is mandated by the $H^{1/2}$-conformity, and the zero trace at $t=0$,
it holds that
\[
  M = {\rm dim}(S^{\bmp,1}_{0,}(J;\cG^m_\sigma)) = \sum_{j=1}^m p_j.
\]
We introduce the temporal quasi-interpolant 
$\Pi^{\bmp,1}_{\cG^m_\sigma} v$ for a sufficiently smooth function $v \colon \, [0,T] \to \IR$ by
\be \label{def:projection}
  \left( \Pi^{\bmp,1}_{\cG^m_\sigma}v\right)(t) :=
  \begin{cases}
    v(t_1) t/t_1,                                                                   & t \in \overline{I_1} \\
    v(t_{j-1}) + \int^t_{t_{j-1}} (\Pi^{p_j-1}_{L^2(I_j)}v' ) (\xi)\mathrm d \xi,   & t \in \overline{I_j}, \;  j \in \{2,\dots,m\},
  \end{cases}
\ee
where $\Pi^{p_j-1}_{L^2(I_j)}$ 
denotes the $L^2(I_j)$ projection onto $\IP^{p_j-1}$. As~\eqref{def:projection} uses point values of the interpolated
function, $\Pi^{\bmp,1}_{\cG^m_\sigma}$ is only defined on a subspace of the continuous functions $C^0(\overline{J})$. Note that the nodal property
\be \label{projNodal}
  \forall j \in \{0, \dots, m \}: \quad \left( \Pi^{\bmp,1}_{\cG^m_\sigma}v\right)(t_j) = v(t_j)
\ee
holds true for a sufficiently smooth function $v$ with $v(0)=0$.
Our approach to convergence rate bounds in the fractional Sobolev norms is to 
first obtain estimates in the additive integer order $L^2$ and $H^1$ norms
in the usual fashion by scaling estimates on unit size reference domains,
then to interpolate the global $L^2$ and $H^1$ norm error bounds.
For $j\geq 2$, the error bounds in $I_j$ are standard $hp$-interpolation error
estimates as can be found, e.g., in \cite[Chapter~3]{Schwab98}.
We recall the error bound on $\hat{I} = (-1,1)$, with the estimates on $I_j$
following by scaling.
\begin{lemma}\label{lem:hp}
On $\hat{I} = (-1,1)$,
for every $p\in \IN$, a projector $\hat{\Pi}^p_{1} \colon \, H^1(\hat{I}) \to \IP^p(\hat{I})$ exists such that,
for all $v\in H^{r+1}(\hat{I})$ with some $r\in \IN$, 
\be\label{eq:hphIH1}
\| v' - (\hat{\Pi}^p_1 v)' \|_{L^2(\hat{I})}^2
\leq 
\frac{(p-s)!}{(p+s)!} \|v^{(s+1)}\|^2_{L^2(\hat{I})} 
\ee
and
\be\label{eq:hphIL2}
\| v - \hat{\Pi}^p_1 v \|_{L^2(\hat{I})}^2
\leq
\frac{1}{p(p+1)} \frac{(p-s)!}{(p+s)!} \|v^{(s+1)}\|^2_{L^2(\hat{I})} 
\ee
are valid for every integer $s$ with $0\leq s \leq \min\{r,p\}$.
Furthermore, 
\[
  \left(\hat{\Pi}^p_1 v\right)(\pm 1) = v(\pm 1) \;.
\]
\end{lemma}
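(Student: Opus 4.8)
The goal is to construct, on the reference interval $\hat I=(-1,1)$, a projector $\hat\Pi^p_1$ onto $\IP^p(\hat I)$ that preserves endpoint values and achieves the stated $p$-explicit bounds. The plan is to build this projector through the classical device of expanding the derivative in Legendre polynomials and truncating. Specifically, I would \emph{first} define the projector by its action on the derivative: set $(\hat\Pi^p_1 v)' := \pi^{p-1}(v')$, where $\pi^{p-1}$ denotes a suitable projection of $v'$ onto $\IP^{p-1}$, and then fix the constant of integration by imposing $(\hat\Pi^p_1 v)(-1)=v(-1)$. The natural choice for $\pi^{p-1}$ is the $L^2(\hat I)$-orthogonal projection onto $\IP^{p-1}$, i.e. the truncated Legendre expansion of $v'$.

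The \emph{second} step is to verify the two endpoint conditions and the $H^1$-seminorm bound. Writing $v'=\sum_{k\ge 0} a_k L_k$ in Legendre polynomials $L_k$ (with $\|L_k\|_{L^2(\hat I)}^2 = 2/(2k+1)$), the $L^2$-projection gives $(\hat\Pi^p_1 v)'=\sum_{k=0}^{p-1} a_k L_k$, so that
\be\label{plan:h1err}
\| v' - (\hat\Pi^p_1 v)' \|_{L^2(\hat I)}^2 = \sum_{k\ge p} a_k^2 \frac{2}{2k+1}.
\ee
The endpoint value at $-1$ holds by construction; the value at $+1$ is preserved because $\int_{-1}^1 \big(v'-(\hat\Pi^p_1 v)'\big)\,\dd t = a_0\int_{-1}^1 L_0 - a_0\int_{-1}^1 L_0 = 0$, using that the omitted modes $L_k$, $k\ge 1$, integrate to zero over $\hat I$; hence $(\hat\Pi^p_1 v)(1)=v(1)$ as well. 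To convert \eqref{plan:h1err} into the factorial bound \eqref{eq:hphIH1}, I would relate the tail Legendre coefficients $a_k$ of $v'$ to the $L^2$-norm of the higher derivative $v^{(s+1)}$. This is the standard mechanism: repeatedly integrating by parts, the Legendre coefficient $a_k$ of $v'$ can be expressed through the coefficients of $v^{(s+1)}$ against \emph{ultraspherical}/Gegenbauer-type polynomials, and the orthogonality relations produce exactly the ratio $(p-s)!/(p+s)!$ after summing the tail. I expect to invoke the known one-dimensional $hp$-approximation estimate from \cite[Chapter~3]{Schwab98} essentially verbatim, since the construction there is precisely this Legendre-truncation projector.

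The \emph{third} step is the $L^2$ bound \eqref{eq:hphIL2}. Here I would use that $w:=v-\hat\Pi^p_1 v$ satisfies $w(\pm 1)=0$ and $w'=\sum_{k\ge p}a_k L_k$, so $w$ itself has a Legendre expansion whose coefficients are obtained by integrating the tail modes of $v'$. Using the antiderivative relation $\int L_k = (L_{k+1}-L_{k-1})/(2k+1)$ together with the vanishing endpoint values, one expresses the Legendre coefficients $b_j$ of $w$ in terms of the $a_k$; orthogonality then yields $\|w\|_{L^2(\hat I)}^2$ as a weighted sum of the $a_k^2$ with weights carrying the extra factor $1/\!\big(p(p+1)\big)$ relative to \eqref{plan:h1err}. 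Combined with the derivative-to-$v^{(s+1)}$ estimate from the previous step, this gives \eqref{eq:hphIL2}.

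The main obstacle is the \emph{third} step's bookkeeping, namely tracking the exact factorial ratio $(p-s)!/(p+s)!$ and the additional $1/(p(p+1))$ factor with the correct constants, rather than merely an order-optimal bound. The delicate point is that the estimate must hold with the clean constant $1$ in front and be uniform over all admissible $s$ with $0\le s\le\min\{r,p\}$; this forces one to use the sharp Gegenbauer orthogonality identities rather than crude tail estimates. Since these sharp one-dimensional results are exactly what \cite[Chapter~3]{Schwab98} establishes, the cleanest route is to cite that reference for \eqref{eq:hphIH1}--\eqref{eq:hphIL2} and only verify the endpoint-interpolation property $(\hat\Pi^p_1 v)(\pm1)=v(\pm1)$, which follows transparently from the integral-of-the-tail argument sketched above.
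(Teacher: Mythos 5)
Your construction of $\hat\Pi^p_1$ as the antiderivative of the $L^2(\hat I)$-projection $\hat\Pi^{p-1}_0$ of $v'$, anchored at $v(-1)$, with the endpoint value at $+1$ recovered from the vanishing integral of the omitted Legendre modes, is exactly the projector the paper uses, and the paper likewise obtains the factorial bounds \eqref{eq:hphIH1}--\eqref{eq:hphIL2} by citing \cite[Chapter~3]{Schwab98} rather than rederiving the Gegenbauer orthogonality computation. Your proposal is correct and follows essentially the same route as the paper.
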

We remark that the projectors $\hat{\Pi}^p_1$ for $p\geq 1$ are
given by 
\[
  \left(\hat{\Pi}^p_1 v \right) (t) := v(-1) + \int_{-1}^t \hat{\Pi}^{p-1}_0 (v') (\xi)\mathrm d \xi \;,
\quad t\in \hat{I}\;,
\]
with $\hat{\Pi}^{p-1}_0$ denoting the $L^2(\hat{I})$ projection onto 
$\IP^{p-1}$.

For $I_j \in \cG^m_\sigma$ with $j\geq 2$,
the global quasi-interpolation projectors $\Pi^{\bmp,1}_{\cG^m_\sigma} $ 
are obtained by transporting $\hat{\Pi}^{p_j}_1$ from $\hat{I}$ to $I_j\in \cG^m_\sigma$ 
via affine transformations $T_j:\hat{I}\to I_j$, resulting in local projections
$\Pi^{p_j}_{1,j}$. 

We scale the projection error bounds \eqref{eq:hphIH1} and \eqref{eq:hphIL2}
to $I_j$, and apply them to strongly measurable maps $v \colon \, I_j\to X$ for 
separable Hilbert space $X$ by Hilbertian tensorization 
of Bochner spaces.
We denote by $\IP^p(I_j;X)$ the linear space of 
polynomial maps of degree $p$ with coefficients in $X$. 
We obtain the following result.
\begin{lemma}\label{lem:hpIj}
For every $I_j\in \cG^m_\sigma$ with $j\geq 2$ with 
time-step size $k_j = |I_j|$, and for every $p\in \IN$, there
exists a projector 
$\Pi^{p}_{1,j}: H^1(I_j;X) \to \IP^{p}(I_j;X)$ 
such that,
for every $v\in H^{r+1}(I_j;X)$ with some $r\in \IN$,
the error bounds
\[
  \| \partial_t v - \partial_t \Pi^{p}_{1,j} v \|_{L^2(I_j;X)}^2
  \leq
  C
  \frac{(p-s)!}{(p+s)!} 
  \left(\frac{k_j}{2}\right)^{2s}
  \|\partial^{s+1}_t v \|^2_{L^2(I_j;X)} 
\]
and
\[
  \| v - \Pi^{p}_{1,j} v \|_{L^2(I_j;X)}^2
  \leq
  C
  \frac{1}{p(p+1)} \frac{(p-s)!}{(p+s)!}
  \left(\frac{k_j}{2}\right)^{2(s+1)}
  \|\partial^{s+1}_t v \|^2_{L^2(I_j;X)} 
\]
are valid for every integer $s$ with 
$0\leq s \leq \min\{r,p\}$.
Furthermore,
\[
  \left(\Pi^{p}_{1,j} v\right)(t) = v(t) \;\;\mbox{in}\;\; X 
\;\;\mbox{for} \;\; t \in \partial I_j = \{t_{j-1},t_j\}
\;.
\]
\end{lemma}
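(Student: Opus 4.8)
The plan is to obtain Lemma~\ref{lem:hpIj} from the reference estimates of Lemma~\ref{lem:hp} by the standard pullback--scaling argument, lifted from scalar functions to $X$-valued functions via the tensor product structure of Bochner spaces. First I would define the affine map $T_j:\hat I=(-1,1)\to I_j$ by $T_j(\hat t)=t_{j-1}+\tfrac{k_j}{2}(\hat t+1)$, which has constant Jacobian $\mathrm dt/\mathrm d\hat t=k_j/2$ and sends the endpoints $\mp1$ to $t_{j-1},t_j$. For scalar $v$, I would set $\Pi^{p}_{1,j}v := (\hat\Pi^p_1(v\circ T_j))\circ T_j^{-1}$, so that the nodal property $(\Pi^{p}_{1,j}v)(t_{j-1})=v(t_{j-1})$, $(\Pi^{p}_{1,j}v)(t_j)=v(t_j)$ is inherited directly from the endpoint-interpolation property of $\hat\Pi^p_1$ recorded in Lemma~\ref{lem:hp}.

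Next I would track how the two reference bounds \eqref{eq:hphIH1}, \eqref{eq:hphIL2} scale. Under $t=T_j(\hat t)$ one has the chain-rule relations $\partial_{\hat t}(v\circ T_j)=\tfrac{k_j}{2}\,(\partial_t v)\circ T_j$ and, more generally, $\partial_{\hat t}^{s+1}(v\circ T_j)=\bigl(\tfrac{k_j}{2}\bigr)^{s+1}(\partial_t^{s+1}v)\circ T_j$, together with the change-of-variables identity $\|w\circ T_j\|_{L^2(\hat I)}^2=\tfrac{2}{k_j}\|w\|_{L^2(I_j)}^2$. Applying \eqref{eq:hphIH1} to $v\circ T_j$ and substituting these relations, the seminorm on the left carries a factor $\bigl(\tfrac{k_j}{2}\bigr)^2\cdot\tfrac{2}{k_j}$ and the right-hand side a factor $\bigl(\tfrac{k_j}{2}\bigr)^{2(s+1)}\cdot\tfrac{2}{k_j}$; the common factor $\tfrac{2}{k_j}$ cancels, leaving exactly the claimed weight $\bigl(\tfrac{k_j}{2}\bigr)^{2s}$ on the $H^1$-seminorm estimate. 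The analogous computation starting from \eqref{eq:hphIL2} yields the weight $\bigl(\tfrac{k_j}{2}\bigr)^{2(s+1)}$ for the $L^2$ estimate, with the factorial quotient $\tfrac{(p-s)!}{(p+s)!}$ and the $\tfrac{1}{p(p+1)}$ prefactor passing through untouched since they are dimensionless. This accounts for the constant $C$ in the statement, which here may be taken equal to $1$.

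The final step is the lift to $X$-valued maps, and this is the only genuinely nonroutine point. I would invoke the Hilbertian tensor product identification $H^k(I_j;X)\simeq H^k(I_j)\otimes X$ for a separable Hilbert space $X$ and define $\Pi^p_{1,j}$ on $\IP^p(I_j;X)$ as $\Pi^p_{1,j}\otimes\mathrm{Id}_X$, i.e.\ by applying the scalar projector componentwise in an orthonormal basis $\{e_n\}$ of $X$. For $v=\sum_n v_n e_n$ with $v_n\in H^{r+1}(I_j)$ the Parseval identity $\|w\|_{L^2(I_j;X)}^2=\sum_n\|w_n\|_{L^2(I_j)}^2$ reduces each Bochner-norm bound to a sum over $n$ of the already-established scalar bounds, which sum term-by-term because the scaling constants do not depend on $n$; the derivative $\partial_t$ commutes with the component decomposition, so the $H^1$-seminorm bound lifts identically. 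The main obstacle to watch is purely a well-definedness/density issue: the scalar $\hat\Pi^p_1$ uses point values (hence is defined only on a dense subspace of $H^1$), so I must confirm that the componentwise definition is independent of the basis and extends continuously to all of $H^1(I_j;X)$; this follows since $\hat\Pi^p_1$ is the $H^1$-bounded projector exhibited in Lemma~\ref{lem:hp}, so $\Pi^p_{1,j}\otimes\mathrm{Id}_X$ is bounded on the tensor product and the nodal identity $(\Pi^p_{1,j}v)(t)=v(t)$ in $X$ at $t\in\{t_{j-1},t_j\}$ holds for every sufficiently smooth $v$ by continuity.
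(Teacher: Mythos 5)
Your proposal is correct and follows exactly the route the paper indicates for this lemma (which it states without a written-out proof): pull back to $\hat I=(-1,1)$ via the affine map $T_j$, apply the reference bounds \eqref{eq:hphIH1}--\eqref{eq:hphIL2} of Lemma~\ref{lem:hp}, track the Jacobian factors, and lift to $X$-valued functions by Hilbertian tensorization; your scaling bookkeeping reproduces the stated weights $(k_j/2)^{2s}$ and $(k_j/2)^{2(s+1)}$ with $C=1$. The only (harmless) inaccuracy is the worry that $\hat\Pi^p_1$ is defined only on a dense subspace of $H^1(\hat I)$: since $H^1(\hat I)\hookrightarrow C(\overline{\hat I})$ in one dimension, the point values and hence the projector are defined on all of $H^1(\hat I)$, as the statement of Lemma~\ref{lem:hp} already asserts.
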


\subsection{$\IP^1$-FEM Approximation in $\domain$}
\label{Sec:xAppr}
We consider the choice of subspaces $V^N_x\subset H^1_{\Gamma_D}(\domain)$
in \eqref{eq:xtApprSpc} as standard, conforming $\IP^1$-Lagrangian finite elements
on simplicial meshes $\cT$ of $\domain$. We denote by
$S^1(\domain;\cT)$ the space of continuous, piecewise linear functions on $\cT$,
and further, we define the closed subspace
\be \label{Approx:SGammaD}
    S^1_{\Gamma_D}(\domain;\cT) := S^1(\domain;\cT) \cap H^1_{\Gamma_D}(\domain) \subset H^1_{\Gamma_D}(\domain).
\ee

\subsubsection{{\bf Case $d=1$}}
For any finite partition $\cT$ of the open, bounded and connected
interval $\domain$ into $N$ open subintervals
that is quasi-uniform with mesh width 
$h := \max\{ |I_j|: I_j\in \cT \} >0$,
there exists a constant $c>0$
independent of $N=O(h^{-1})$
such that
the nodal interpolant 
$I^N: C^0(\overline{\domain}) \to S^1(\domain;\cT)$ 
satisfies
\begin{equation} \label{eq:X2d=1I}
\forall v\in X_2: \quad 
\| v - I^N v \|_{L^2(\domain)} 
+ 
N^{-1} \|  v - I^N v \|_{H^1(\domain)}
\leq 
c N^{-2} \| v \|_{H^2(\domain)} 
\;.
\end{equation}
With~\eqref{eq:X2d=1}, 
for any $f\in L^2(\domain)$, we also have that the solution $u = A(\partial_x)^{-1}f$ 
satisfies
\begin{equation*}
\| u - I^N u \|_{L^2(\domain)}
+
N^{-1}
\|  u - I^N u \|_{H^1(\domain)}
\leq
c N^{-2} \| f \|_{L^2(\domain)}
\;.
\end{equation*}

\begin{remark}\label{rem:1Dtransmission_approx}
  For transmission problems with diffusion coefficient $a\in
  W^{1,\infty}(\calD)$ as in Remark~\ref{rem:1Dtransmission},
  assuming that $\cT$
  is compatible with the partition $\calD$
  (i.e., the set of nodes of $\cT$
  includes all interfaces in $\calD$),
  the nodal interpolant $I^N: C^0(\overline{\domain}) \to
  S^1(\domain;\cT)$ satisfies~\eqref{eq:X2d=1I} with $\| v
  \|_{H^2(\calD)}$ instead of $\| v \|_{H^2(\domain)}$
  on the right side. The subsequent estimate for
  $u = A(\partial_x)^{-1}f$, $f\in L^2(\domain)$,
  follows from~\eqref{eq:X2d=1} with $\| v
  \|_{H^2(\calD)}$ on the left side (see Remark~\ref{rem:1Dtransmission}).
\end{remark}
  
\subsubsection{{\bf Case $d=2$}}
$\domain\subset\IR^2$ is a polygon with a finite number of corners and straight sides.
We assume furthermore that each entire side $\Gamma_j$ has either the
Dirichlet or the Neumann boundary condition (this is possible by 
subdividing sides of $\domain$ with changing boundary conditions and by increasing $M$ 
appropriately; points where boundary conditions change become then 
``corner points''). 

As it is well-known (e.g.,~\cite{BKP79,Apel1999,NstrGrd2015} and the references there),
functions $u\in \cK^2_{a+1}(\domain)$ allow for rate-optimal approximation
in $H^1(\domain)$ and $L^2(\domain)$ norms in terms of continuous, piecewise linear nodal 
Lagrangian FEM in $\domain$, on regular, simplicial partitions $\cT^N_\beta$
(see, e.g.,~\cite{BKP79,Apel1999,NstrGrd2015} and the references there for constructions) 
of $\domain$ with $O(N)$ triangles and algebraic corner-refinement
  towards the vertices of $\domain$. The subscript $\beta\in(0,1]$ denotes
  the corner-refinement parameter, with $\beta=1$
  corresponding to quasi-uniform meshes.
As $\cK^2_{a+1}(\domain) \subset C(\overline{\domain})$ (see, e.g.,~\cite{ BKP79}),
the nodal interpolation operator $I^N_\beta$ is well-defined for $u\in \cK^2_{a+1}(\domain)$.
Also, for $u \in \cK^2_{a+1}(\domain)\cap H^1_{\Gamma_D}(\domain)$,
the interpolants $I^N_\beta u$ 
satisfy exactly the homogeneous Dirichlet boundary conditions on~$\Gamma_D$.
Furthermore, for suitably strong mesh grading as expressed by the
parameter $\beta$ (depending on $\domain$, and the corner angles
at the vertices of $\domain$),
the interpolants $I^N_\beta u$ of $u\in \cK^2_{a+1}(\domain)$
converge at optimal rates under mesh refinement:
there exists a constant $c>0$ such that, for all 
$N = {\rm dim}(S^1_{\Gamma_D}(\domain;\cT^N_\beta)) \in \IN$,
\be\label{eq:hFEMP1Bd}
\| u - I^N_\beta u \|_{L^2(\domain)}  
+ 
N^{-\frac 12} \| u - I^N_\beta u \|_{H^1(\domain)} 
\leq 
c N^{-1} 
\| u \|_{\cK^2_{a+1}(\domain)} 
\leq 
cN^{-1} \| f \|_{L^2(\domain)}
\;.
\ee
Here, we used \eqref{eq:X2apriori} in the last step.
\begin{remark}\label{rmk:BisTree}
The interpolation error bound \eqref{eq:hFEMP1Bd} is based on 
the graded mesh family $\{ \cT^N_\beta\}_{N\geq 1}$.
The bound  \eqref{eq:hFEMP1Bd} also holds on families 
of bisection tree meshes, as shown in 
\cite[Theorems~5.1,~2.1]{GaspMorin}.
Such families are typically generated by adaptive algorithms,
and will also be used in the ensuing numerical experiments
in Section~\ref{sec:NumExp} below.
\end{remark}
\begin{remark}\label{rmk:Transm}
For transmission problems in $\domain$, 
with $A$ as in \eqref{eq:Acoerc}, piecewise smooth 
on a finite partition $\{ \domain_i \}_{i=1}^{n_{\mathrm{sub}}}$ 
of $\domain$ in straight-sided polygons $\domain_i$, 
the results in \cite[Theorem~3.7]{FEMTransmission}
imply that with graded meshes in each $\domain_i$ 
with grading towards multimaterial intersection points,
the interpolation error bound \eqref{eq:hFEMP1Bd} is based on
the graded mesh family $\{ \cT^N_\beta\}_{N\geq 1}$
still remains true by approximating $u_{\mathrm{reg}}$ and $w_s$ 
in the decomposition of \cite[Theorem~3.7]{FEMTransmission}
separately.
\end{remark}

\subsubsection{{\bf Case $d=3$}}
\label{sec:Approxd=3}
Only partial extensions of \eqref{eq:hFEMP1Bd} 
to space dimension $d=3$
are available. We indicate the argument in one particular case. 
Specifically,
we assume~\eqref{eq:Acoerc}, \eqref{ass:ALip} and, in addition,
that $A(x) = a(x){\mathbb{I}}$, with $a\in W^{1,\infty}(\domain)$.
Furthermore, we assume that $\Gamma_D = \Gamma$, i.e., we consider homogeneous
Dirichlet boundary conditions on the entire~$\Gamma$.
The temporal (analytic) regularity in Section~\ref{sec:tReg} is then
still valid and, as outlined in Section~\ref{sec:Regd=3},
the space $X_2$ is continuously embedded into a 
weighted Kondrat'ev space in $\domain$ with corner- and edge-weights.
A convergence estimate analogous to the $H^1$ bound in \eqref{eq:hFEMP1Bd}  
(with rate $N^{-1/3}$ instead of $N^{-1/2}$)
is stated in~\cite[Theorem 2.1]{BNZ2005} with $m=1$, 
and proven in~\cite{BNZ2007}, for standard, first-order 
Langrangian FEM in $\domain$ on regular triangulations of $\domain$
into simplices, with \emph{anisotropic edge refinements}.

\section{Convergence Rate of the Space-Time Discretization}
\label{sec:ConvRate}
We are in a position to establish the convergence rate of the space-time
Galerkin discretization \eqref{eq:IBVPVarxt} with 
$V^M_t = S^{\bmp,1}_{0,}(J;\cG^m_\sigma)$ as defined in \eqref{eq:VMt}
and with
$V^N_x = S^1_{\Gamma_D}(\domain;\cT^N_\beta)$ 
as given in \eqref{Approx:SGammaD}, where $\beta=1$ in the case $d=1$.

We will require the temporal $H^{1/2}_{0,}(J)$ projector $Q^{1/2}_t$
onto $V^M_t$ and the spatial $H^1_{\Gamma_D}(\domain)$ 
``Ritz'' projector $Q^1_x$ into $V^N_x$. Being orthogonal projections,
they are stable, i.e., 
$\| Q^{1/2}_t v \|_{H^{1/2}_{0,}(J)} \leq \| v \|_{H^{1/2}_{0,}(J)}$,
$\| Q^1_x v \|_{X_1} \leq \| v \|_{X_1}$, 
and optimal in the respective spaces, i.e.,
\[
\| v - Q^{1/2}_tv \|_{H^{1/2}_{0,}(J)}
=
\min_{w\in V^M_t} \| v - w \|_{H^{1/2}_{0,}(J)}
\;
\mbox{and}
\;
\| v - Q^1_x v \|_{X_1} 
=
\min_{w\in V^N_x} \| v - w \|_{X_1}
\;.
\]
Here, we recall that $X_1 = H^1_{\Gamma_D}(\domain)$ 
denotes the ``energy'' space with 
norm given by $\| v \|_{X_1} := a(v,v)^{1/2}$.
Hence, we may write (for sufficiently regular arguments $v$)
\be\label{eq:xQopt}
\| v - Q^1_x v \|_{X_1}  \leq c \| v - I^N_\beta v \|_{H^1(\domain)} 
\ee
with a constant $c>0$ depending on $\domain$ and on the coefficient $A$.
Assuming a
sufficiently strong corner-mesh refinement in $\domain$ in the
case $d=2$, 
an Aubin--Nitsche duality argument, 
together with~\eqref{eq:X2d=1I} and~\eqref{eq:X2d=1} if $d=1$, 
or~\eqref{eq:hFEMP1Bd} and~\eqref{eq:X2K2a} if $d=2$,
implies that there exists a constant $c>0$ such that,
for all 
$N = {\rm dim}(S^{1}_{\Gamma_D}(\domain;\cT^N_\beta))$
and all $w\in X_2$ (see, e.g., \cite[Thm. 5.2]{BKP79}),
\be\label{eq:hFEML2Bd}
\| w - Q^1_x w\|_{L^2(\domain)}
\leq 
c N^{-2/d} \| w \|_{X_2} \;.
\ee
The optimality of the temporal projection $Q^{1/2}_t$ 
in $H^{1/2}_{0,}(J)$ also implies
\be\label{eq:tQopt}
\| v - Q^{1/2}_tv \|_{H^{1/2}_{0,}(J)}
\leq 
\| v - \Pi^{\bmp,1}_{\cG^m_\sigma}v\|_{H^{1/2}_{0,}(J)}
\ee
for a sufficiently regular $v \colon \, J \to \IR$.
Here, $\Pi^{\bmp,1}_{\cG^m_\sigma}$ 
is the temporal quasi-interpolant of Subsection~\ref{sec:hpApprJ}.
Proceeding as in the proof of \cite[Theorem 3.4]{OStMZ}, 
we obtain the following estimate (see \cite[p.~175 bottom]{OStMZ}). 
\begin{lemma}\label{lem:xtQopt}
Let $u$ and $u^{MN}$ be the solutions to~\eqref{eq:IBVPVar}
and~\eqref{eq:IBVPVarxt}, respectively. We have
\be \label{eq:xtQopt1}
\begin{split}
&\| u - u^{MN} \|_{H^{1/2}_{0,}(J;L^2(\domain))} \leq  \| u - Q^{1/2}_t u\|_{H^{1/2}_{0,}(J;L^2(\domain))}
  \\ 
&\qquad\qquad+ 
  \| u-Q^1_xu \|_{H^{1/2}_{0,}(J;L^2(\domain))}
+ 
\left\| (I-Q^{1/2}_t)(I-Q^1_x) u
\right\|_{H^{1/2}_{0,}(J;L^2(\domain))}
 \\
&\qquad\qquad+
  \| u - Q^1_x u \|_{H^{1/2}_{0,}(J;L^2(\domain))}
+
\| A(\partial_x)(u-Q^{1/2}_tu)\|_{[H^{1/2}_{,0}(J;L^2(\domain))]'}
\;.
\end{split}
\ee
\end{lemma}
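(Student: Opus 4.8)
\emph{The plan is} to insert the tensor-product approximation $w:=(Q^{1/2}_t\otimes Q^1_x)u\in V^M_t\otimes V^N_x$ and to estimate, by the triangle inequality,
\[
\|u-u^{MN}\|_{H^{1/2}_{0,}(J;L^2(\domain))}\leq\|u-w\|_{H^{1/2}_{0,}(J;L^2(\domain))}+\|w-u^{MN}\|_{H^{1/2}_{0,}(J;L^2(\domain))}.
\]
Since $Q^{1/2}_t$ and $Q^1_x$ act on different variables and thus commute on the space-time space, I would use the identity $I-Q^{1/2}_t\otimes Q^1_x=(I-Q^{1/2}_t)+(I-Q^1_x)-(I-Q^{1/2}_t)(I-Q^1_x)$ and a further triangle inequality on $u-w$; this produces precisely the first three terms on the right-hand side of \eqref{eq:xtQopt1}.

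The core is to bound the discrete error $e:=w-u^{MN}\in V^M_t\otimes V^N_x$ \emph{without} recourse to the abstract inf-sup constant, by testing \eqref{eq:IBVPVarxt} with the twisted function $v:=(\HT\otimes I)e\in(\HT V^M_t)\otimes V^N_x$. Slicing in $x$ and applying the ellipticity \eqref{eq:HT2} gives $\langle\partial_t e,v\rangle_{L^2(Q)}=\|e\|^2_{H^{1/2}_{0,}(J;L^2(\domain))}$; expanding $e=\sum_i e_i(t)\phi_i$ in the eigenbasis of \eqref{eq:xtevp} diagonalises the diffusion term to $\sum_i\mu_i\langle e_i,\HT e_i\rangle_{L^2(J)}$, which is nonnegative by the positivity \eqref{eq:HT6}. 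Hence I obtain the coercivity-type lower bound
\[
\|e\|^2_{H^{1/2}_{0,}(J;L^2(\domain))}\leq\langle\partial_t e,v\rangle_{L^2(Q)}+\langle A\nabla_x e,\nabla_x v\rangle_{L^2(Q)}.
\]

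Next I would invoke Galerkin orthogonality: subtracting \eqref{eq:IBVPVarxt} from \eqref{eq:IBVPVar} tested against the admissible $v$ shows that the right-hand side above equals $-\langle\partial_t(u-w),v\rangle_{L^2(Q)}-\langle A\nabla_x(u-w),\nabla_x v\rangle_{L^2(Q)}$. The decisive move is to split $u-w=(u-Q^{1/2}_tu)+(I-Q^1_x)Q^{1/2}_tu$ and to let the two orthogonalities act in opposite directions. In the diffusion pairing the term $(I-Q^1_x)Q^{1/2}_tu$ drops out because $v(\circ,x)$ is, for each time, in $V^N_x$ and $Q^1_x$ is the $a(\circ,\circ)$-orthogonal (Ritz) projection, leaving $\langle A(\partial_x)(u-Q^{1/2}_tu),v\rangle_{L^2(Q)}$, which is at most the fifth term of \eqref{eq:xtQopt1} times $\|v\|_{H^{1/2}_{,0}(J;L^2(\domain))}$ by duality. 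In the $\partial_t$-pairing, conversely, the term $u-Q^{1/2}_tu$ vanishes by \eqref{eq:HT3} and the $H^{1/2}_{0,}(J)$-orthogonality of $Q^{1/2}_t$ onto $V^M_t\ni e(\circ,x)$, while the surviving piece equals $\langle(I-Q^1_x)Q^{1/2}_tu,e\rangle_{H^{1/2}_{0,}(J;L^2(\domain))}$, again by \eqref{eq:HT3}, and is bounded by the fourth term (using that $Q^{1/2}_t$ is a contraction) times $\|e\|_{H^{1/2}_{0,}(J;L^2(\domain))}$. Since \eqref{eq:HT1} makes $\HT$ an isometry, $\|v\|_{H^{1/2}_{,0}(J;L^2(\domain))}=\|e\|_{H^{1/2}_{0,}(J;L^2(\domain))}$; dividing through by this quantity yields the fourth and fifth terms, with constant one, and completes the estimate.

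\emph{The main obstacle} I anticipate is the simultaneous bookkeeping of the two orthogonalities under the $\HT$-twist: one single decomposition of $u-w$ must be chosen so that the spatial Ritz orthogonality annihilates the cross term in the diffusion pairing while the temporal $H^{1/2}_{0,}(J)$-orthogonality, accessed only indirectly through \eqref{eq:HT3}, annihilates the cross term in the $\partial_t$-pairing. One must also verify that the surviving diffusion contribution is controlled in the \emph{dual} norm $[H^{1/2}_{,0}(J;L^2(\domain))]'$ rather than a stronger one, and justify the diagonalisation underlying the \eqref{eq:HT6}-positivity as well as all termwise slicing in the Bochner/tensor-product setting.
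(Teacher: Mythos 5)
Your argument is correct and follows essentially the same route as the paper, which does not spell out the proof but defers to \cite[Thm.~3.4, p.~175]{OStMZ}: insert the tensorized projection $(Q^{1/2}_t\otimes Q^1_x)u$, control the discrete error by testing with $(\HT\otimes I)e$ (which is exactly how the discrete stability is obtained there), and let the temporal $H^{1/2}_{0,}(J)$-orthogonality and the spatial Ritz orthogonality each annihilate one cross term, leaving the five terms of \eqref{eq:xtQopt1}. Your reconstruction, including the treatment of the diffusion residual in the dual norm $[H^{1/2}_{,0}(J;L^2(\domain))]'$ and the use of \eqref{eq:HT2}, \eqref{eq:HT3}, \eqref{eq:HT6}, matches the cited proof.
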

We combine \eqref{eq:xQopt}--\eqref{eq:xtQopt1} with the preceding regularity, proven in Section~\ref{sec:Reg}, and the approximation properties of the projections $Q^{1/2}_t$, $Q^1_x$ to 
obtain our main convergence rate bound. For this purpose, we address 
{\bf Term$1$} through {\bf Term$5$} in the upper bound \eqref{eq:xtQopt1}.
To this end, we use that the solution $u$ to~\eqref{eq:IBVPVar} belongs to $H^{1/2}_{0,}(J;X_2)$, 
which was proven in Lemma~\ref{lem:H12X2Reg}.

We start by deriving upper bounds for {\bf Term$1$} and {\bf Term$5$}. 
We have
    $L^2(Q)\simeq
    [L^2(Q)]'\hookrightarrow [H^{1/2}_{,0}(J;L^2(\domain))]'$ and
    $H^{1/2}_{0,}(J;X_2) \hookrightarrow L^2(J;X_2)$
    with continuous and dense injections.
    This, together with~\eqref{eq:X2L2}, gives the following bound for {\bf Term$5$}:
    \[
      \begin{split}
      \|A(\partial_x)(u-Q^{1/2}_tu)\|_{[H^{1/2}_{,0}(J;L^2(\domain))]'}
      &\le\tilde c (T) \|A(\partial_x)(u-Q^{1/2}_tu)\|_{L^2(Q)}
      \\ &
      = \tilde c (T) \|u-Q^{1/2}_tu\|_{L^2(J;X_2)}
      \\ &
      \le c(T) \|u-Q^{1/2}_t u\|_{H^{1/2}_{0,}(J;X_2)}\;.
      \end{split}
    \]
Using estimate~\eqref{ineq:XrEmbedding} yields that {\bf Term$1$} can be bounded by
\[
  \| u - Q^{1/2}_t u\|_{H^{1/2}_{0,}(J;L^2(\domain))} \leq c \|u-Q^{1/2}_t u\|_{H^{1/2}_{0,}(J;X_2)}
\]
with a constant $c>0$, i.e., for both {\bf Term$1$} and {\bf Term$5$}, we need an estimate of the term $\|u-Q^{1/2}_t u\|_{H^{1/2}_{0,}(J;X_2)}$. For this purpose, we use the temporal quasi-interpolant $\Pi^{\bmp,1}_{\cG^m_\sigma}$ of Subsection~\ref{sec:hpApprJ} and the inequality \eqref{eq:tQopt}. First, note that $\Pi^{\bmp,1}_{\cG^m_\sigma} u$ is well-defined since $u \colon \, [0,T] \to X_2$ is continuous, see estimate \eqref{eq:dltuBd} for $l=0$, $r=2$, and since $u \colon \, [0,T] \to X_2$ is smooth for $t>0$ due to Lemma~\ref{lem:DtBound}. Second, we have $u \in H^{1/2}_{0,}(J;X_2)$ because of Lemma~\ref{lem:H12X2Reg}, hence $u-\Pi^{\bmp,1}_{\cG^m_\sigma} u \in H^{1/2}_{0,}(J;X_2)$. Thus, it remains to estimate $\|u-\Pi^{\bmp,1}_{\cG^m_\sigma} u\|_{H^{1/2}_{0,}(J;X_2)}$, which is done in the following lemmas.

\begin{lemma} \label{lem:BoundGamma}
    Let $\alpha>0$ and $m \in \IN_0$ be given. For $\mu \geq 1$ with $\mu > \alpha$, there exist a constant $C_\Gamma > 0$, depending on $\alpha, \mu$, but independent of $m$ such that
    \begin{equation*}
        \sum_{j=0}^m \alpha^{2j} \frac{\Gamma(\lfloor \mu j \rfloor-j+1)}{\Gamma(\lfloor \mu j \rfloor +j+1)} \Gamma(j+3)^2 \leq C_\Gamma.
    \end{equation*}
\end{lemma}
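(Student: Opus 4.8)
The plan is to observe that every summand is nonnegative, so the partial sum is dominated by the full series and it suffices to bound $\sum_{j=0}^\infty a_j$, where $a_j := \alpha^{2j}\frac{\Gamma(\lfloor\mu j\rfloor - j + 1)}{\Gamma(\lfloor\mu j\rfloor + j+1)}\Gamma(j+3)^2$; any such bound is automatically independent of $m$ and can be taken as $C_\Gamma$. The term $j=0$ is the constant $a_0 = \Gamma(3)^2 = 4$, so I would treat $j\geq 1$ and aim to show that $a_j$ decays geometrically.

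The key step is to rewrite the Gamma quotient combinatorially. Since $\mu\geq 1$ forces $d_j := \lfloor\mu j\rfloor - j \in \IN_0$, the quotient is $\frac{d_j!}{(d_j+2j)!} = \frac{1}{(2j)!}\binom{d_j+2j}{2j}^{-1}$. Writing $\Gamma(j+3)^2 = ((j+1)(j+2))^2(j!)^2$ and using the elementary inequalities $\binom{d_j+2j}{2j}\geq\bigl(\tfrac{d_j+2j}{2j}\bigr)^{2j}$ and $\binom{2j}{j}\geq \tfrac{4^j}{2j+1}$ (so that $\tfrac{(j!)^2}{(2j)!}\leq\tfrac{2j+1}{4^j}$), the algebra collapses: the factor $\alpha^{2j}4^{-j}\bigl(\tfrac{2j}{d_j+2j}\bigr)^{2j}$ is exactly $\bigl(\tfrac{\alpha j}{d_j+2j}\bigr)^{2j}$, and one is left with
\[
  a_j \;\leq\; \bigl((j+1)(j+2)\bigr)^2\,(2j+1)\left(\frac{\alpha j}{d_j+2j}\right)^{2j}.
\]
The hypothesis $\mu>\alpha$ enters in the final step. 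Since $\lfloor\mu j\rfloor > \mu j - 1$ gives $d_j+2j > (\mu+1)j - 1 > 0$, the base obeys $\frac{\alpha j}{d_j+2j} < \frac{\alpha j}{(\mu+1)j-1} = \frac{\alpha}{(\mu+1)-1/j}$, which is decreasing in $j$ and hence bounded for $j\geq 1$ by its value at $j=1$, namely $\alpha/\mu$. As $\mu>\alpha$ makes $\alpha/\mu<1$, we get $a_j \leq ((j+1)(j+2))^2(2j+1)(\alpha/\mu)^{2j}$: a polynomial times a geometric factor with ratio $(\alpha/\mu)^2<1$. The series converges, and I would record $C_\Gamma := 4 + \sum_{j\geq 1}((j+1)(j+2))^2(2j+1)(\alpha/\mu)^{2j}$, manifestly finite and depending only on $\alpha,\mu$.

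The main obstacle is not depth but bookkeeping: one must control the floor $\lfloor\mu j\rfloor$ and, crucially, choose factorial manipulations that capture the \emph{average} size $\approx(\mu+1)/2$ of the $2j$ denominator factors rather than their minimum. A naive bound replacing every factor by the smallest one $d_j+1$ loses a $j^j$ growth and fails; routing through $\binom{N}{k}\geq(N/k)^k$ is precisely what lets the exponential rates cancel, leaving the surviving geometric base $\alpha/\mu$, which is below one exactly under $\mu>\alpha$.
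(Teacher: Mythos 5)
Your proof is correct, and it takes a genuinely different route from the paper's. The paper (Appendix~\ref{sec:ProofLemBoundGamma}, following \cite[Lemma~3.4]{DD20}) attacks both Gamma factors with Stirling's inequalities \eqref{Stirling}, obtaining the bound $\frac{288\pi\mu}{\mathrm e}\,j^6(\alpha/\mu)^{2j}$ for the $j$-th summand and concluding with the ratio test. You instead exploit that $\mu\ge 1$ makes $d_j=\lfloor\mu j\rfloor-j$ a nonnegative integer, rewrite the Gamma quotient as $\frac{1}{(2j)!}\binom{d_j+2j}{2j}^{-1}$, and use only the elementary inequalities $\binom{N}{k}\ge (N/k)^k$ and $\binom{2j}{j}\ge 4^j/(2j+1)$; the algebra then isolates the same geometric base, since $\frac{\alpha j}{d_j+2j}<\frac{\alpha}{(\mu+1)-1/j}\le\frac{\alpha}{\mu}<1$. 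Every step checks out: the $j=0$ term is indeed $\Gamma(3)^2=4$, the binomial identities are exact, and the final majorant $((j+1)(j+2))^2(2j+1)(\alpha/\mu)^{2j}$ is summable. Your route is arguably more self-contained -- it needs no asymptotics for $\Gamma$ and works with the floor exactly rather than replacing $\lfloor\mu j\rfloor$ by $\mu j$ inside Gamma arguments -- and it even yields a slightly lower-degree polynomial prefactor; what the paper's Stirling route buys is uniformity of technique, since the same inequalities \eqref{Stirling} are reused in Lemma~\ref{lem:Tt} and Lemma~\ref{lem:hperrorsmoothX2}. Your key observation that one must capture the average size of the $2j$ denominator factors, rather than their minimum, is exactly the right diagnosis of where a naive factorial bound would fail.
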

\begin{proof}
    The proof is based on \cite[Lemma~3.4]{DD20}, see Appendix~\ref{sec:ProofLemBoundGamma}.
\end{proof}

\begin{lemma} \label{lem:hperrorsmoothX2}
  Assume \eqref{eq:TimeReg} with some $\varepsilon \in (0,1)$ and some $\dtime \geq 1$. Let the grading parameter $\sigma \in (0,1)$ be given. Choose the slope parameter $\muhp \geq 1$ such that
  \be \label{muhp}
      \muhp > \frac{(1-\sigma) \dtime}{2 \sigma^{(3+\varepsilon)/2}},
  \ee
  and fix the number of elements $m_2 \in \IN_0$ such that
  \be \label{m2}
    m_2  \begin{cases}
            = 0, & T \leq 1, \\
            > \frac{T-T_1}{4} \cdot \dtime \sigma^{-\frac{1+\varepsilon}{2 \lfloor \muhp \rfloor }}, & T>1,
          \end{cases}
  \ee
  where $T_1 = \min \{1,T\}.$ 
Then, for every $m_1 \in \IN$ with $m_1 \geq \max\{3,m_2\}$ and $m=m_1+m_2$, 
the geometric partition $\cG^m_\sigma$ of $J=(0,T)$, 
which is given by the time steps $t_j$ in \eqref{def:tj} with time-step sizes $k_j$ in \eqref{def:kj}, 
and the temporal order distribution $\bmp \in \IN^m$ defined by \eqref{def:pj},
lead
to the error bound
  \[
    \| u - \Pi^{\bmp,1}_{\cG^m_\sigma} u \|_{H^{1/2}_{0,}((t_2,T);X_2)}^2 \leq C \sigma^{\varepsilon m_1},
  \]
  with $t_2 = T_1 \sigma^{m_1 - 2}$ and a constant $C>0$ independent of $m_1$.
\end{lemma}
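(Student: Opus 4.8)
The plan is to bound the fractional-order error by interpolating between integer-order ($L^2$ and $H^1$) error bounds, exactly the strategy announced in Section~\ref{sec:hpApprJ}. Write $e := u - \Pi^{\bmp,1}_{\cG^m_\sigma}u$. By the nodal property~\eqref{projNodal}, $e$ vanishes at every node $t_j$; in particular $e(t_2)=0$, so $e \in H^1_{0,}((t_2,T);X_2)$. Since $H^{1/2}_{0,}((t_2,T);X_2)$ is, by its very definition, the real-interpolation space $(H^1_{0,}((t_2,T);X_2), L^2((t_2,T);X_2))_{1/2,2}$, the standard interpolation inequality yields $\|e\|_{H^{1/2}_{0,}((t_2,T);X_2)}^2 \leq C\, \|e\|_{H^1_{0,}((t_2,T);X_2)}\, \|e\|_{L^2((t_2,T);X_2)}$ with a constant $C$ depending only on the fine index and on $\theta=1/2$, hence independent of $t_2,T$; here the $H^1_{0,}$ norm is $\|\partial_t\,\cdot\,\|_{L^2}$. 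Thus it suffices to bound the two integer-order errors separately and multiply.

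First I would localize both integer norms over the mesh, $\|e\|_{H^1_{0,}}^2 = \sum_{j\geq 3}\|\partial_t e\|_{L^2(I_j;X_2)}^2$ and $\|e\|_{L^2}^2 = \sum_{j\geq 3}\|e\|_{L^2(I_j;X_2)}^2$, noting that on each $I_j$ with $j\geq 2$ the quasi-interpolant coincides with the scaled reference projector $\Pi^{p_j}_{1,j}$, so Lemma~\ref{lem:hpIj} applies directly with $X=X_2$. On each geometric interval $3\leq j\leq m_1$ I would take the interpolation order $s=s_j:=j$, admissible because $u$ is smooth on $\overline{I_j}$ away from $t=0$ (Lemma~\ref{lem:DtBound}) and $p_j=\lfloor\muhp j\rfloor \geq j$ by $\muhp\geq 1$. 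Inserting the regularity bound~\eqref{eq:IntEst} (with $\s=2$, $a=t_{j-1}$, $l=j+1$, using $t_j\leq T_1$ for $j\leq m_1$) for $\|\partial_t^{\,j+1}u\|_{L^2(I_j;X_2)}$ and the mesh relations $k_j=t_{j-1}(1-\sigma)/\sigma$, $t_{j-1}=T_1\sigma^{m_1-j+1}$ from~\eqref{def:tj}--\eqref{def:kj}, all powers of $t_{j-1}$ collapse and each summand takes the form of a geometric factor times $\frac{\Gamma(\lfloor\muhp j\rfloor-j+1)}{\Gamma(\lfloor\muhp j\rfloor+j+1)}\Gamma(j+3)^2$. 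Explicitly, the $H^1$ contribution carries the prefactor $\sigma^{(m_1+1)(\varepsilon-1)}$ and base $\beta_{H^1}=\tfrac{(1-\sigma)\delta}{2}\sigma^{-(1+\varepsilon)/2}$, while the $L^2$ contribution carries $\sigma^{(m_1+1)(\varepsilon+1)}$, an additional harmless factor $p_j^{-2}$, and the \emph{larger} base $\beta_{L^2}=\tfrac{(1-\sigma)\delta}{2}\sigma^{-(3+\varepsilon)/2}$, which is precisely the threshold in~\eqref{muhp}.

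With $\muhp > \beta_{L^2}\geq\beta_{H^1}$, Lemma~\ref{lem:BoundGamma} (invoked once with $\alpha=\beta_{H^1}$ and once with $\alpha=\beta_{L^2}$, $\mu=\muhp$) bounds both geometric sums by a constant $C_\Gamma$ independent of $m_1$, giving $\|e\|_{H^1_{0,}}^2\lesssim \sigma^{(m_1+1)(\varepsilon-1)}$ and $\|e\|_{L^2}^2\lesssim \sigma^{(m_1+1)(\varepsilon+1)}$ on the geometric part $(t_2,T_1)$. For $T>1$ the uniform intervals $m_1<j\leq m$ inside $[T_1,T]$ need separate treatment, since Proposition~\ref{Prop:tExpBd} only covers $b\leq T_1$: there I would use the time-integrated derivative bound~\eqref{eq:L2abX2} of Lemma~\ref{lem:boundab} with uniform degree $p_T=\lfloor\muhp m_1\rfloor$ and step $k_T$, and the choice~\eqref{m2} of $m_2$ is exactly what forces this part to be $\lesssim\sigma^{\varepsilon m_1}$ as well. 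Feeding the two integer-order bounds into the interpolation inequality yields $\|e\|_{H^{1/2}_{0,}((t_2,T);X_2)}^2\lesssim \big(\sigma^{(m_1+1)(\varepsilon-1)}\big)^{1/2}\big(\sigma^{(m_1+1)(\varepsilon+1)}\big)^{1/2}C_\Gamma=\sigma^{(m_1+1)\varepsilon}C_\Gamma\leq C\sigma^{\varepsilon m_1}$, as claimed.

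The main obstacle I anticipate is the bookkeeping in the second step: after inserting~\eqref{eq:IntEst} and the mesh relations one must check that the $j$-dependence collapses \emph{exactly} into the summand $\alpha^{2j}\frac{\Gamma(\lfloor\muhp j\rfloor-j+1)}{\Gamma(\lfloor\muhp j\rfloor+j+1)}\Gamma(j+3)^2$ of Lemma~\ref{lem:BoundGamma} with the correct base $\alpha$, and in particular that it is the $L^2$ error (through the extra power of $k_j$, hence of $\sigma^{-1}$), not the $H^1$ error, whose base $\beta_{L^2}$ pins down the slope condition~\eqref{muhp}. A secondary point is to manage the crossover interval and the uniform $p$-part so that all constants remain independent of $m_1$; this is where~\eqref{m2} is essential.
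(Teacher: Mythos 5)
Your proposal follows essentially the same route as the paper's proof: the interpolation inequality of Lemma~\ref{lem:interpolationEstimate} reduces the $H^{1/2}_{0,}$ bound to the product of the $L^2$ and $H^1$ errors, which are then estimated elementwise via Lemma~\ref{lem:hpIj} with $s_j=j$, the regularity bound~\eqref{eq:IntEst}, and Lemma~\ref{lem:BoundGamma} invoked with exactly the two bases you identify (the $L^2$ base being the binding one for~\eqref{muhp}), while $[T_1,T]$ is handled via~\eqref{eq:L2abX2} and~\eqref{m2}. The only imprecision is your claim that the $(T_1,T)$ contributions are merely $\lesssim\sigma^{\varepsilon m_1}$: the $L^2$-error contribution there must in fact be bounded by $\sigma^{m_1(1+\varepsilon)}$ (which the computation with $p_T$, $k_T$ and~\eqref{m2} does yield, as in the paper), since otherwise its product with the large $H^1$ factor $\sigma^{m_1(\varepsilon-1)}$ from the geometric part would not close.
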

\begin{proof}
  Set $w=u - \Pi^{\bmp,1}_{\cG^m_\sigma} u$. Since $w \in H^1_{0,}((t_2,T);X_2)$, see the nodal property \eqref{projNodal}, the interpolation estimate (Lemma~\ref{lem:interpolationEstimate}) yields
  \begin{equation*}
    \| w \|_{H^{1/2}_{0,}((t_2,T);X_2)}^2 \leq \| \partial_t w \|_{L^2((t_2,T);X_2)} \| w \|_{L^2((t_2,T);X_2)}.
  \end{equation*}
  We estimate both factors on the right side using Proposition~\ref{Prop:tExpBd}, which states estimates for $b \leq \min\{1,T\}=T_1$ only. Thus, we split $[0,T]$ into the two intervals $[0,T_1]$ and $[T_1,T]$ for the case $T>1$. Without loss of generality, let us assume that $T>1$, i.e., $T_1=1$ (otherwise we examine only $[0,T] \subset [0,1]$ and omit the considerations for the second interval $[T_1,T]$). We investigate the intervals $[0,T_1]$ and $[T_1,T]$ separately.

  \noindent
  \textbf{Interval $[0,T_1]$:} 
  With $\lambda = \frac{1-\sigma}{\sigma}$, the time-step size fulfills $k_j = t_j - t_{j-1} = t_{j-1} \lambda$ for $j=2,\dots,m_1$. Lemma~\ref{lem:hpIj} with $p_j=\lfloor \muhp j \rfloor$, $s_j=j$ and estimate~\eqref{eq:IntEst} in Proposition~\ref{Prop:tExpBd} yield
  \begin{align*}
        &\| \partial_t w \|_{L^2((t_2,T_1);X_2)}^2 = \sum_{j=3}^{m_1} \| \partial_t w \|_{L^2(I_j;X_2)}^2 \\
        &\leq C \sum_{j=3}^{m_1} \frac{( \lfloor \muhp j \rfloor - j)!}{(\lfloor \muhp j \rfloor + j)!} \left( \frac{\lambda}{2} \right)^{2j} t_{j-1}^{2j} \dtime^{2(j+1)} \Gamma(j+3)^2 t_{j-1}^{-2(j+1)+1+\varepsilon} \\
        &\stackrel{\eqref{def:tj}}{=}C \dtime^2 T_1^{-1+\varepsilon} \sigma^{(m_1+1)(-1+\varepsilon)} \sum_{j=3}^{m_1} \frac{\Gamma(\lfloor \muhp j \rfloor-j+1)}{\Gamma(\lfloor \muhp j \rfloor +j+1)} \left( \frac{\lambda \dtime}{2 \sigma^{(-1+\varepsilon)/2}} \right)^{2j} \Gamma(j+3)^2 \\
        &\leq C_1 \sigma^{m_1(-1+\varepsilon)},
  \end{align*}
  where, in the last step, Lemma~\ref{lem:BoundGamma} is applied for $\muhp > \alpha = \frac{(1-\sigma) \dtime}{2 \sigma^{(3+\varepsilon)/2}} \geq \frac{(1-\sigma) \dtime}{2 \sigma^{(1+\varepsilon)/2}}= \frac{\lambda \dtime}{2 \sigma^{(-1+\varepsilon)/2}}$ with \eqref{muhp} and the constant $C_1>0$ is independent of $m_1$. In the same way, we get from Lemma~\ref{lem:BoundGamma} for $ \muhp > \alpha = \frac{(1-\sigma) \dtime}{2 \sigma^{(3+\varepsilon)/2}} = \frac{\lambda \dtime}{2 \sigma^{(1+\varepsilon)/2}}$ with \eqref{muhp} that 
  \begin{multline*}
    \| w \|_{L^2((t_2,T_1);X_2)}^2 = \sum_{j=3}^{m_1} \| w \|_{L^2(I_j;X_2)}^2 \\
    \leq C \sigma^{m_1(1+\varepsilon)} \sum_{j=3}^{m_1} \frac{\Gamma(\lfloor \muhp j \rfloor-j+1)}{\Gamma(\lfloor \muhp j \rfloor +j+1)} \left( \frac{\lambda \dtime}{2 \sigma^{(1+\varepsilon)/2}} \right)^{2j} \Gamma(j+3)^2 \leq C_2 \sigma^{m_1(1+\varepsilon)}
  \end{multline*}
  with a constant $C_2>0$ independent of $m_1$.

  \noindent
  \textbf{Interval $[T_1,T]$ in the case $T>1$:} 
  First, note that $T_1 = 1$. From Lemma~\ref{lem:hpIj} with the choices 
  $p_j=s_j=p_T:=\lfloor \muhp m_1 \rfloor$ and $k_j=k_T$, 
  estimate~\eqref{eq:L2abX2} in Lemma~\ref{lem:boundab}, 
  and the Stirling's formula
$\sqrt{2\pi n}\left(\frac{n}{\mathrm{e}}\right)^n
<n!<\sqrt{2\pi n}\left(\frac{n}{\mathrm{e}}\right)^n \mathrm{e}^{\frac{1}{12n}}$ with $\mathrm{e}^{\frac{1}{6n}}<2$,
we get
  \begin{align*}
        \| \partial_t w \|_{L^2((1,T);X_2)}^2 =& \sum_{j=m_1+1}^{m} \| \partial_t w \|_{L^2(I_j;X_2)}^2 \\
        \leq& C \frac{1}{(2 p_T )!} \left( \frac{k_T}{2} \right)^{2 p_T} \underbrace{\sum_{j=m_1+1}^m \| \partial_t^{(p_T+1)} u \|_{L^2(I_j;X_2)}^2}_{=\| \partial_t^{(p_T+1)} u \|_{L^2((1,T);X_2)}^2}\\
        \leq& C \dtime^2 \frac{1}{(2 p_T )!} \left( \frac{k_T \dtime}{2} \right)^{2 p_T} \Gamma(p_T+3)^2 C(\varepsilon,1,T)^2 \\
        \leq& C \dtime^2  C(\varepsilon,1,T)^2 (p_T+2)^4 \frac{ 4\pi p_T \left( \frac{p_T}{\mathrm{e}} \right)^{2p_T} } { \sqrt{2 \pi} \sqrt{2p_T} \left( \frac{2p_T}{\mathrm{e}} \right)^{2p_T} }    \left( \frac{k_T \dtime}{2} \right)^{2 p_T} \\
        =& C 2 \sqrt{\pi} \dtime^2  C(\varepsilon,1,T)^2 (p_T+2)^4 \sqrt{p_T}    \left( \frac{k_T \dtime}{4} \right)^{2 p_T} \leq C_3 \sigma^{m_1(1+\varepsilon)}
  \end{align*}
  with a constant $C_3 >0$ independent of $m_1$. In the last step, due to \eqref{m2}, we use that a constant $q \in (0,1)$ exists such that
  \[
    k_T = \frac{T-T_1}{m_2} =   q \frac{4}{\dtime} \sigma^{\frac{m_1(1+\varepsilon)}{2 \lfloor \muhp \rfloor m_1 }} \leq q \frac{4}{\dtime} \sigma^{\frac{m_1(1+\varepsilon)}{2 \lfloor \muhp m_1 \rfloor }} = q \frac{4}{\dtime} \sigma^{\frac{m_1(1+\varepsilon)}{2 p_T}}
  \]
  and therefore, $(p_T+2)^4 \sqrt{p_T} q^{2p_T} \to 0$ as $p_T \to \infty$.
  Analogously, we obtain 
  \[
    \| w \|_{L^2((1,T);X_2)}^2 \leq C_4 \sigma^{m_1(1+\varepsilon)}
  \]
  with a constant $C_4 >0$ independent of $m_1$.

  With all estimates above, we conclude that  
  \begin{multline*}
    \| w \|_{H^{1/2}_{0,}((t_2,T);X_2)}^2 \leq  \| \partial_t w \|_{L^2((t_2,T);X_2)} \| w \|_{L^2((t_2,T);X_2)} \\
    \leq \sqrt{ C_1 \sigma^{m_1(-1+\varepsilon)} +  C_3 \sigma^{m_1(1+\varepsilon)}} \sqrt{ C_2 \sigma^{m_1(1+\varepsilon)} + C_4 \sigma^{m_1(1+\varepsilon)} } \leq C_{\mathrm{est}} \sigma^{\varepsilon m_1},
  \end{multline*}
  where $C_{\mathrm{est}}>0$ is independent of $m_1$.
\end{proof}

\begin{lemma} \label{lem:thpexpconv}
  Under the assumptions of Lemma~\ref{lem:hperrorsmoothX2}, the estimate
  \[
      \| u - \Pi^{\bmp,1}_{\cG^m_\sigma} u \|_{H^{1/2}_{0,}(J;X_2)} \leq C \mathrm{exp}(-b \sqrt{M})
  \]
  holds true with a constant $C$ independent of $b$ and $M$, where $b= - \varepsilon \ln \sigma / \sqrt{8\muhp} > 0$ and $M = {\rm dim}(S^{\bmp,1}_{0,}(J;\cG^m_\sigma)) \leq 2 \muhp m_1^2 \leq 2 \muhp m^2$.
\end{lemma}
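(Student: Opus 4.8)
The plan is to upgrade the estimate of Lemma~\ref{lem:hperrorsmoothX2}, which controls the error only on $(t_2,T)$, to the full interval $J=(0,T)$, and then to convert the resulting algebraic bound in $m_1$ into an exponential bound in $\sqrt M$. Writing $w:=u-\Pi^{\bmp,1}_{\cG^m_\sigma}u$, I note that $w$ vanishes at $t=0$ (homogeneous initial data) and, by the nodal property~\eqref{projNodal}, also at the interior node $\tau:=t_2=T_1\sigma^{m_1-2}$. First I would pass from the Fourier norm to the Slobodetskii triple norm via Lemma~\ref{lem:NormEquivalence} (applied to $X_2$-valued functions, the equivalence constants being controlled since here $b-a\le T$), and then localize $|w|_{H^{1/2}(0,T)}^2$ at $\tau$ using Lemma~\ref{lem:FractionalNormPointtau}. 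This bounds $\|w\|_{H^{1/2}_{0,}(J;X_2)}^2$ by a contribution supported on $(\tau,T)$, a contribution supported on $(0,\tau)$, and the two Hardy integrals $\int_0^\tau\|w\|_{X_2}^2/(\tau-t)\,\mathrm dt$ and $\int_\tau^T\|w\|_{X_2}^2/(s-\tau)\,\mathrm ds$ produced across $\tau$. It then suffices to show that each group is $O(\sigma^{\varepsilon m_1})$.

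On $(\tau,T)$ the terms $\|w\|_{L^2(\tau,T)}^2$, $|w|_{H^{1/2}(\tau,T)}^2$ and $\int_\tau^T\|w\|_{X_2}^2/(s-\tau)\,\mathrm ds$ assemble into $\normiii{w}_{H^{1/2}_{0,}((\tau,T);X_2)}^2$, which by Lemma~\ref{lem:NormEquivalence} and Lemma~\ref{lem:hperrorsmoothX2} is $\le C\sigma^{\varepsilon m_1}$. The only leftover term there, $\int_\tau^T\|w\|_{X_2}^2/t\,\mathrm dt$, I would bound using $1/t\le 1/\tau$ together with the $L^2$-estimate $\|w\|_{L^2((\tau,T);X_2)}^2\le C\sigma^{(1+\varepsilon)m_1}$ extracted from the proof of Lemma~\ref{lem:hperrorsmoothX2}, giving $\tau^{-1}\|w\|_{L^2((\tau,T);X_2)}^2\le C\sigma^{-m_1}\sigma^{(1+\varepsilon)m_1}=C\sigma^{\varepsilon m_1}$.

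The hard part is the contribution from $(0,\tau)=I_1\cup I_2$, where the solution has only limited temporal regularity at $t=0$ (its $X_2$-valued time derivative fails to be square integrable near $0$, cf. Lemma~\ref{lem:DtBound}), so that only linear interpolation is used on $I_1$. For the left-vanishing triple norm $\normiii{w}_{H^{1/2}_{0,}((0,\tau);X_2)}$ I would combine the a~priori regularity $\normiii{u}_{H^{1/2}_{0,}((0,\tau);X_2)}\le C\tau^{\varepsilon/2}=C\sigma^{\varepsilon m_1/2}$ from Lemma~\ref{lem:H12X2Reg} with direct estimates of the quasi-interpolant: explicit on $I_1$, where $\Pi^{\bmp,1}_{\cG^m_\sigma}u$ is the linear map $t\mapsto u(t_1)\,t/t_1$ with $\|u(t_1)\|_{X_2}\le Ct_1^{\varepsilon/2}$ by Proposition~\ref{Prop:tExpBd} ($l=0$), and via the local bound of Lemma~\ref{lem:hpIj} together with~\eqref{eq:IntEst} on $I_2$. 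The genuinely coupled quantity, which I expect to be the main obstacle, is the right Hardy integral $\int_0^\tau\|w\|_{X_2}^2/(\tau-t)\,\mathrm dt$: one cannot split $w=u-\Pi u$ here because $u(\tau)\neq0$, so the nodal vanishing $w(\tau)=0$ must be exploited. I would split it at $t_1$, bounding the part over $(0,t_1)$ by $(\tau-t_1)^{-1}\|w\|_{L^2((0,t_1);X_2)}^2\le C\sigma^{-m_1}\sigma^{(1+\varepsilon)m_1}=C\sigma^{\varepsilon m_1}$, and the part over $(t_1,\tau)\subset I_2$ — where $w$ is smooth and vanishes at $\tau$ — by the Cauchy--Schwarz estimate $\|w(t)\|_{X_2}^2\le(\tau-t)\int_t^\tau\|\partial_s w\|_{X_2}^2\,\mathrm ds$, which reduces it to $\tau\,\|\partial_t w\|_{L^2(I_2;X_2)}^2$, again $O(\sigma^{\varepsilon m_1})$ by Lemma~\ref{lem:hpIj}.

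Collecting these bounds yields $\|w\|_{H^{1/2}_{0,}(J;X_2)}\le C\sigma^{\varepsilon m_1/2}$ with $C$ independent of $m_1$. Finally, from $p_j=\lfloor\muhp j\rfloor\le\muhp j$ for $j\le m_1$, $p_j=\lfloor\muhp m_1\rfloor$ for $m_1<j\le m$, and $m_2\le m_1$, I obtain the dimension count $M=\sum_{j=1}^m p_j\le\muhp m_1(m_1+1)/2+m_2\muhp m_1\le 2\muhp m_1^2\le 2\muhp m^2$, hence $m_1\ge\sqrt{M/(2\muhp)}$. Since $\ln\sigma<0$, this gives
\[
  \sigma^{\varepsilon m_1/2}=\exp\!\Big(\tfrac{\varepsilon m_1}{2}\ln\sigma\Big)\le\exp\!\Big(\tfrac{\varepsilon}{2\sqrt{2\muhp}}\,\sqrt M\,\ln\sigma\Big)=\exp(-b\sqrt M),\qquad b=\frac{-\varepsilon\ln\sigma}{\sqrt{8\muhp}}>0,
\]
which is the asserted estimate with a constant independent of $b$ and $M$.
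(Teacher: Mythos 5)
Your proposal is correct and follows essentially the same route as the paper: the same localization of the $H^{1/2}$-norm at $\tau=t_2$ via Lemmas~\ref{lem:NormEquivalence} and~\ref{lem:FractionalNormPointtau}, the same treatment of the three resulting groups of terms (Lemma~\ref{lem:hperrorsmoothX2} on $(t_2,T)$; the a~priori bound of Lemma~\ref{lem:H12X2Reg} plus explicit control of the quasi-interpolant on $(0,t_2)$; the split of the Hardy integral at $t_1$ exploiting $w(t_2)=0$), and the same dimension count $M\le 2\muhp m_1^2$ to convert $\sigma^{\varepsilon m_1}$ into $\exp(-2b\sqrt M)$. The only cosmetic deviation is your bound of $\int_{t_2}^T\|w\|_{X_2}^2/t\,\mathrm dt$ via $1/t\le 1/t_2$ and the sharper $L^2$ estimate $\sigma^{(1+\varepsilon)m_1}$, where the paper instead uses $1/t\le 1/(t-t_2)$ to absorb this term into the triple norm on $(t_2,T)$; both work.
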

\begin{proof}
  Set $w=u - \Pi^{\bmp,1}_{\cG^m_\sigma} u$. Then, for $X_2$-valued functions, the norm equivalence in 
  Lemma~\ref{lem:NormEquivalence} and the localization in 
  Lemma~\ref{lem:FractionalNormPointtau} for $a=0$, $b=T$, $\tau = t_2$ yield
  \begin{align*}
    &(C_{\mathrm{Int},1})^2 \| w \|_{H^{1/2}_{0,}(J;X_2)}^2
    \leq \normiii{w}_{H^{1/2}_{0,}(J;X_2)}^2 \\
    &\qquad \leq
    \| w \|^2_{L^2(J;X_2)} 
    + 
     | w |_{H^{1/2}((0,t_2);X_2)}^2  + 4 \int_0^{t_2} \frac{\|w(t)\|_{X_2}^2}{t_2-t} \mathrm dt \\
     &\qquad\quad + 4 \int_{t_2}^T \frac{\|w(s)\|_{X_2}^2}{s-t_2} \mathrm ds  +   | w |_{H^{1/2}((t_2,T);X_2)}^2
    +
    \int_0^T \frac{\|w(t)\|_{X_2}^2}{t} \mathrm dt \\
    &\qquad\leq \normiii{w}_{H^{1/2}_{0,}((0,t_2);X_2)}^2  + 4 \int_0^{t_2} \frac{\|w(t)\|_{X_2}^2}{t_2-t} \mathrm dt +   5 \normiii{w}_{H^{1/2}_{0,}((t_2,T);X_2)}^2,
  \end{align*}
  where we used the definition~\eqref{Sob:NormTriple} of the triple norm and the bound 
  $\int_{t_2}^T \frac{\|w(t)\|_{X_2}^2}{t} \mathrm dt \leq \int_{t_2}^T \frac{\|w(s)\|_{X_2}^2}{s-t_2} \mathrm ds$.
  Next, we estimate the three terms on the right side.
  
  \noindent
  \textbf{First term:}
The triangle inequality, Lemma~\ref{lem:NormEquivalence}, Lemma~\ref{lem:H12X2Reg}, 
     the Poincar\'e inequality (Lemma~\ref{lem:Poincare}), definition~\eqref{def:projection}, 
     and estimates \eqref{eq:dltuBd}, \eqref{eq:IntEst} yield
  \begin{align*}
    &\normiii{w}_{H^{1/2}_{0,}((0,t_2);X_2)}^2 \leq 2 \normiii{u}_{H^{1/2}_{0,}((0,t_2);X_2)}^2 + 2 \normiii{\Pi^{\bmp,1}_{\cG^m_\sigma} u}_{H^{1/2}_{0,}((0,t_2);X_2)}^2 \\
    &\; \leq C t_2^{\varepsilon} + 2 (C_{\mathrm{Int},2})^2 \sqrt{ 1 + \frac{4t_2^2}{\pi^2}} \underbrace{ \| \Pi^{\bmp,1}_{\cG^m_\sigma} u \|_{H^{1/2}_{0,}((0,t_2);X_2)}^2 }_{\leq \frac{2}{\pi} (t_2-0) \|\partial_t \Pi^{\bmp,1}_{\cG^m_\sigma} u \|_{L^2((0,t_2);X_2)}^2 } \\
    &\; \leq C T_1^{\varepsilon} \sigma^{-2\varepsilon} \sigma^{\varepsilon m_1} + C t_2 \Big[ \underbrace{\int_0^{t_1} \frac{\| u(t_1) \|_{X_2}^2}{t_1^2} \mathrm dt}_{\leq C t_1^{-1+\varepsilon}} +  \underbrace{ \int_{t_1}^{t_2} \|\Pi^{p_2-1}_{L^2(I_2)} \partial_t u (t)\|_{X_2}^2 \mathrm dt }_{\leq \|\partial_t u \|_{L^2(I_2;X_2)}^2 \leq  C \dtime^2 t_1^{-1+\varepsilon}} \Big] \leq C_1 \sigma^{\varepsilon m_1},
  \end{align*}
  with a constant $C_1 >0$ independent of $m_1$, where we used
  \[
    t_2 t_1^{-1+\varepsilon} = T_1 \sigma^{m_1-2} T_1^{-1+\varepsilon} \sigma^{(-1+\varepsilon)(m_1-1)} = T_1^{\varepsilon} \sigma^{-1-\varepsilon} \sigma^{\varepsilon m_1}.
  \]

  \noindent
  \textbf{Second term:}
  With the bound~\eqref{eq:dltuBd}, the nodal property \eqref{projNodal} and $k_1=t_1 = T_1 \sigma^{m_1-1}$, $k_2=T_1 \sigma^{m_1-2}(1-\sigma)$, we find
  \begin{align*}
    4& \int_0^{t_2} \frac{\|w(t)\|_{X_2}^2}{t_2 - t} \mathrm dt = 4 \int_0^{t_1} \frac{\|w(t)\|_{X_2}^2}{t_2 - t} \mathrm dt + 4 \int_{t_1}^{t_2} \frac{\|w(t)\|_{X_2}^2}{t_2 - t} \mathrm dt  \\
    &=  4 \int_0^{t_1} \frac{\|u(t) - u(t_1)t/t_1\|_{X_2}^2}{t_2 - t} \mathrm dt + 4 \int_{t_1}^{t_2} \frac{\| \int_t^{t_2} \partial_t w(\xi) \mathrm d\xi \|_{X_2}^2}{t_2 - t} \mathrm dt  \\
    &\leq \frac{8}{k_2} \int_0^{t_1} \|u(t)\|_{X_2}^2 \mathrm dt + \frac{8}{k_2} \int_0^{t_1} \|u(t_1)\|_{X_2}^2 \frac{t^2}{k_1^2} \mathrm dt + 4 \int_{t_1}^{t_2} \frac{\left[ \int_t^{t_2} \| \partial_t w(\xi) \|_{X_2} \mathrm d\xi \right]^2}{t_2 - t} \mathrm dt \\
    &\leq \frac{C}{k_2} \int_0^{t_1} t^{\varepsilon} \mathrm dt + \frac{C  t_1^{\varepsilon}}{k_1^2 k_2} \int_0^{t_1} t^2 \mathrm dt + 4 \int_{t_1}^{t_2} \| \partial_t w \|_{L^2((t,t_2);X_2)}^2 \mathrm dt \\
    &\leq 2C  \frac{T_1^{\varepsilon} \sigma^{1-\varepsilon}}{1-\sigma} \sigma^{\varepsilon m_1} + 4 k_2 \| \partial_t w \|_{L^2(I_2;X_2)}^2 \leq C_2 \sigma^{\varepsilon m_1},
  \end{align*}
with a constant $C_2 >0$ independent of $m_1$, 
where in the last step we have used the estimate~\eqref{eq:IntEst}. 
This yields
  \begin{equation*}
     4 k_2 \| \partial_t u \|_{L^2(I_2;X_2)}^2 \leq C T_1 \sigma^{m_1-2}(1-\sigma) t_1^{-1+\varepsilon} = C  T_1^{\varepsilon} \frac{1-\sigma}{\sigma^{1+\varepsilon}} \sigma^{\varepsilon m_1}.
  \end{equation*}

  \noindent
  \textbf{Third term:}
  Lemma~\ref{lem:NormEquivalence} and Lemma~\ref{lem:hperrorsmoothX2} give
  \[
    5 \normiii{w}_{H^{1/2}_{0,}((t_2,T);X_2)}^2 
    \leq 
   C (C_{\mathrm{Int},2})^2 \sqrt{ 1 + T^2} \| w \|_{H^{1/2}_{0,}((t_2,T);X_2)}^2 
    \leq 
    C_3 \sigma^{\varepsilon m_1},
  \]
  with a constant $C_3 >0$ independent of $m_1$.

 \noindent
  \textbf{Conclusion of the proof:}
  As the temporal number of degrees of freedom $M$ fulfills
  \be \label{MEstimate}
    M \leq \sum_{j=1}^{m_1} \lfloor \muhp j \rfloor + \lfloor \muhp m_1 \rfloor m_2 
      \leq \muhp \frac{m_1(m_1+1)}{2} + \muhp m_1^2 \leq 2 \muhp m_1^2
  \ee
  with $m_2 \leq m_1$, using all the estimates above, we conclude
  \[
    \| w \|^2_{H^{1/2}_{0,}(J;X_2)} \leq C_4 \sigma^{\varepsilon m_1} \leq C_4 \mathrm{exp}(-2b \sqrt{M}),
  \]
with a constant $C_4 >0$ independent of $m_1$, $M$ 
and 
$b= - \varepsilon \ln \sigma / \sqrt{8\muhp} > 0$, 
i.e., the assertion follows.
\end{proof}
As Lemma~\ref{lem:thpexpconv} implies
exponential convergence bounds on {\bf Term$1$} and {\bf Term$5$},
it remains to treat {\bf Terms}$2$--$4$ in \eqref{eq:xtQopt1}.
{\bf Term$2$} and {\bf Term$4$} are identical.
We focus on {\bf Term$3$}. 
Using that $Q^{1/2}_t$ is a projector in the Hilbert space
$H^{1/2}_{0,}(J)$, 
the triangle inequality gives 
\[
\left\| (I-Q^{1/2}_t)(I-Q^1_x) u \right\|_{H^{1/2}_{0,}(J;L^2(\domain))}
\leq 
2 \left\| u-Q^1_x u \right\|_{H^{1/2}_{0,}(J;L^2(\domain))}\;.
\]
Thus, {\bf Term$3$} can be estimated in the same way as
{\bf Term$2$} and {\bf Term$4$}. 
Using
\[
H^{1/2}_{0,}(0,T;L^2(\domain)) \simeq H^{1/2}_{0,}(J)\otimes L^2(\domain)
\simeq 
L^2(\domain) \otimes H^{1/2}_{0,}(J) \simeq L^2(\domain;H^{1/2}_{0,}(J)),
\]
we may use the $L^2(\domain)$ error bound \eqref{eq:hFEML2Bd} on the 
Ritz projection $Q^1_x$ and the regularity result in Lemma~\ref{lem:H12X2Reg} for $b=T$,
in connection with the norm equivalence in 
Lemma~\ref{lem:NormEquivalence} for $a=0$, $b=T$,
to arrive at
\begin{equation}\label{eq:Q1xErr}
\left\| u-Q^1_x u \right\|_{H^{1/2}_{0,}(J;L^2(\domain))}
\leq 
c N^{-2/d}
\left\| u \right\|_{H^{1/2}_{0,}(J;X_2)} \leq C N^{-2/d},
\end{equation}
with a constants $c>0$, $C>0$ independent of $N$.

We combine the previous estimates 
to obtain the main result of this paper.

\begin{theorem}\label{thm:Conv}
  Let the space dimension $d$ be either $d=1$ or $d=2$.
  Assume that the diffusion coefficient 
  $A\in W^{1,\infty}(\domain;\IR^{d\times d}_{\mathrm{sym}})$ 
  is uniformly positive definite, i.e., that \eqref{eq:Acoerc} is satisfied,
  and that the forcing $g$ in \eqref{eq:IBVP} satisfies 
  the temporal analytic regularity~\eqref{eq:TimeReg}. 
Furthermore, assume that the assumptions of Lemma~\ref{lem:hperrorsmoothX2} on the temporal mesh $\cG^m_\sigma$ in \eqref{def:tj} with $\muhp \geq 1$ and $m_2 \in \IN_0$ fulfilling \eqref{muhp} and \eqref{m2}, respectively, and the temporal order distribution $\bmp \in \IN^m$ in \eqref{def:pj} are satisfied.

  Then the space-time Galerkin 
  approximation \eqref{eq:IBVPVarxt} admits a unique solution 
  $u^{MN} \in S^{\bmp,1}_{0,}(J;\cG^m_\sigma)\otimes S^1_{\Gamma_D}(\domain;\cT^N_\beta)$
  with the temporal $hp$-FE space $S^{\bmp,1}_{0,}(J;\cG^m_\sigma)$ 
  of dimension $M = {\rm dim}(S^{\bmp,1}_{0,}(J;\cG^m_\sigma))$
  as defined in \eqref{eq:VMt}, and 
  with the spatial FE space $S^1_{\Gamma_D}(\domain;\cT^N_\beta)$
  of continuous, piecewise linear FEM
  on a sequence of
  suitably graded, regular triangulations $\{\cT^N_\beta\}_{N}$
  in $\domain$ ($\beta=1$, i.e., quasi-uniform partitions, if $d=1$) 
  of dimension $N={\rm dim}(S^1_{\Gamma_D}(\domain;\cT^N_\beta))$.

  Moreover, 
  a constant $C>0$ (independent of $M$ and $N$) exists
  such that the space-time discretization \eqref{eq:IBVPVarxt} 
  based on these spaces satisfies the error bound
  \begin{equation}\label{eq:hpxtErrBd}
  \| u - u^{MN} \|_{H^{1/2}_{0,}(J;L^2(\domain))}  
  \leq 
  C 
  \left(\exp(-b\sqrt{M}) + N^{-2/d} \right)
  \end{equation}
  with $b= - \varepsilon \ln \sigma / \sqrt{8\muhp} > 0.$
\end{theorem}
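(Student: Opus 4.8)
The plan is to assemble the bound purely from the a~priori error splitting of Lemma~\ref{lem:xtQopt}, the regularity of Section~\ref{sec:Reg}, and the two approximation estimates already in hand, so that the proof is essentially a collection argument. First I would dispatch existence and uniqueness of $u^{MN}$: these follow at once from the discrete inf-sup condition stated after~\eqref{eq:IBVPVarxt}, which holds with a constant independent of $\cG^m_\sigma$, $\bmp$, and $N$, and so require no new work. The substance is then to control each of {\bf Term$1$}--{\bf Term$5$} in~\eqref{eq:xtQopt1} by one of the two contributions on the right of~\eqref{eq:hpxtErrBd}.

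For {\bf Term$1$} and {\bf Term$5$} I would reduce both to $\|u-Q^{1/2}_t u\|_{H^{1/2}_{0,}(J;X_2)}$, exactly as in the discussion preceding Lemma~\ref{lem:BoundGamma}: {\bf Term$5$} is handled via the continuous, dense embeddings $L^2(Q)\hookrightarrow[H^{1/2}_{,0}(J;L^2(\domain))]'$ and $H^{1/2}_{0,}(J;X_2)\hookrightarrow L^2(J;X_2)$ together with the isometry $\|A(\partial_x)\cdot\|_{L^2(Q)}=\|\cdot\|_{L^2(J;X_2)}$ from~\eqref{eq:X2L2}, while {\bf Term$1$} uses the embedding~\eqref{ineq:XrEmbedding} of $X_2$ into $X_0=L^2(\domain)$. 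Invoking the optimality~\eqref{eq:tQopt} of $Q^{1/2}_t$ then replaces $Q^{1/2}_t u$ by the temporal quasi-interpolant $\Pi^{\bmp,1}_{\cG^m_\sigma}u$; this is admissible because $u\in H^{1/2}_{0,}(J;X_2)$ by Lemma~\ref{lem:H12X2Reg} and $u\colon[0,T]\to X_2$ is continuous (from~\eqref{eq:dltuBd} with $l=0$, $\s=2$) and smooth for $t>0$ by Lemma~\ref{lem:DtBound}. Lemma~\ref{lem:thpexpconv} now yields $\|u-\Pi^{\bmp,1}_{\cG^m_\sigma}u\|_{H^{1/2}_{0,}(J;X_2)}\le C\exp(-b\sqrt{M})$ with $b=-\varepsilon\ln\sigma/\sqrt{8\muhp}>0$, which is precisely the first summand in~\eqref{eq:hpxtErrBd}.

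For {\bf Term$2$}, {\bf Term$3$}, and {\bf Term$4$} I would note that {\bf Term$2$} and {\bf Term$4$} coincide, and that {\bf Term$3$} is bounded by $2\|u-Q^1_x u\|_{H^{1/2}_{0,}(J;L^2(\domain))}$ since $Q^{1/2}_t$ is a norm-one projector in $H^{1/2}_{0,}(J)$. All three are therefore governed by the spatial Ritz-projection estimate~\eqref{eq:Q1xErr}: exploiting the Hilbertian tensor-product identification $H^{1/2}_{0,}(J;L^2(\domain))\simeq L^2(\domain;H^{1/2}_{0,}(J))$, the $L^2(\domain)$ bound~\eqref{eq:hFEML2Bd} on $Q^1_x$, and the regularity $u\in H^{1/2}_{0,}(J;X_2)$ of Lemma~\ref{lem:H12X2Reg} with $b=T$, one obtains $\|u-Q^1_x u\|_{H^{1/2}_{0,}(J;L^2(\domain))}\le c\,N^{-2/d}\|u\|_{H^{1/2}_{0,}(J;X_2)}\le C\,N^{-2/d}$, the second summand in~\eqref{eq:hpxtErrBd}. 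Summing the five contributions over the splitting of Lemma~\ref{lem:xtQopt} gives the claimed estimate.

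Granting the preparatory lemmas, the genuine difficulty has already been discharged inside Lemma~\ref{lem:thpexpconv} and the weighted analytic regularity of Proposition~\ref{Prop:tExpBd}, so the remaining obstacle is the bookkeeping of the five terms rather than any new analysis. The one point I would watch carefully is that the single mixed norm $H^{1/2}_{0,}(J;X_2)$ must serve simultaneously as the regularity class for the temporal $hp$-interpolation (where it encodes time-analyticity into $X_2$ with the blow-up rates of~\eqref{eq:dltuBd}) and for the spatial $L^2(\domain)$-duality estimate behind~\eqref{eq:hFEML2Bd}. Making the latter rigorous requires that the fractional-in-time norm commute with the spatial Ritz projection through the tensor-product structure, i.e.\ that~\eqref{eq:hFEML2Bd} may be applied in the temporal Fourier/interpolation variable and then reassembled in $H^{1/2}_{0,}(J)$; once this commutation is granted, the collection above completes the proof.
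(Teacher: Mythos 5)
Your proposal is correct and follows essentially the same route as the paper: the paper's proof of Theorem~\ref{thm:Conv} is exactly the assembly of Lemma~\ref{lem:xtQopt}, Lemma~\ref{lem:thpexpconv} (for {\bf Term$1$} and {\bf Term$5$} via the reduction to $\|u-\Pi^{\bmp,1}_{\cG^m_\sigma}u\|_{H^{1/2}_{0,}(J;X_2)}$), and estimate~\eqref{eq:Q1xErr} (for {\bf Terms}$2$--$4$), with well-posedness taken from the discrete inf-sup condition of Section~\ref{sec:xtVarForm}. The ``commutation'' issue you flag at the end is resolved in the paper exactly as you suggest, by the Hilbertian tensor-product identification $H^{1/2}_{0,}(J;L^2(\domain))\simeq L^2(\domain;H^{1/2}_{0,}(J))$, so no additional argument is needed.
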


\begin{proof}
Existence and uniqueness of the solution $u^{MN}$
were established at the end of Section~\ref{sec:xtVarForm}. 
Estimate~\eqref{eq:hpxtErrBd} follows from Lemma~\ref{lem:xtQopt},
taking into account
Lemma~\ref{lem:thpexpconv}, and estimate~\eqref{eq:Q1xErr}.
\end{proof}

Balancing the terms in the upper bound \eqref{eq:hpxtErrBd} results in
\[
  M \simeq O\left((\log N)^2\right) \quad \text{ or } \quad m_1  \simeq O(\log N),
\]
where $M \leq 2 \muhp m_1^2 \leq 2 \muhp m^2,$ see \eqref{MEstimate}.
Then, 
the \emph{number of degrees of freedom for the space-time discretization}
behaves, as $N\to\infty$, 
as
\be \label{ST:dofsBehavior}
  MN \simeq O\left(N (\log N)^2\right),
\ee
i.e., it is essentially (up to the $(\log N)^2$ 
factor) equal to the number of
degrees of freedom for the discretization of one spatial problem.
Importantly, in the solution algorithms of~\cite{langer2020efficient}, 
$M \simeq O\left((\log N)^2\right)$ 
will reduce time and memory requirements.

\begin{remark} \label{remk:pFEM}
  Theorem~\ref{thm:Conv} remains valid for solutions $u(t,\circ)$ which depend analytically on 
  $t\in [0,T]$. 
  Classical results on
  exponential rates of convergence for polynomial approximation of analytic functions in $[0,T]$
  (e.g.,~\cite[Chapter~12]{Davis}) imply that for any constant number of temporal elements $m \in \IN$ (e.g., $m=1$) with temporal polynomial degrees $\bmp = (p,\dots,p) \in \IN^m$ with $p \in \IN$, temporal exponential convergence follows when $p\to\infty$ ($p$-method).
  Under the otherwise exact same assumptions as in Theorem~\ref{thm:Conv}, 
  one obtains in place of~\eqref{eq:hpxtErrBd} the error bound
  \be \label{pxtErrBd}
      \| u - u^{MN} \|_{H^{1/2}_{0,}(J;L^2(\domain))}
      \leq
      C
      \left(\exp(-bp) + N^{-2/d} \right)
  \ee
  with $M = m p$ and constants $b>0$, $C>0$ independent of $p$ and $N$.
  This allows to improve \eqref{ST:dofsBehavior} to 
  \be \label{ST:dofsBehaviorpFEM}
      MN \simeq O(N \log N).
  \ee
\end{remark}

\section{Numerical Experiments}
\label{sec:NumExp}
%
In this section, we present numerical examples for the space-time Galerkin 
approximation~\eqref{eq:IBVPVarxt} of the heat equation 
 with homogeneous Dirichlet conditions 
\be\label{eq:modelproblem}
    \partial_t u - \Delta_x u = g \quad \text{ in } Q, \quad u|_{t=0} = 0, \quad \gamma_0(u) = 0 \quad \text{ on } \partial \domain,
\ee
i.e., $A(\partial_x) = - \Delta_x$ in \eqref{eq:IBVP}, $u_0=0$ in \eqref{eq:IC} and $u_D= 0$ with $\Gamma_D = \Gamma = \partial \domain$ in \eqref{eq:BC}. We use globally continuous 
functions, which are piecewise linear in space and piecewise polynomials of 
higher-order in time, see Theorem~\ref{thm:Conv}.
We start by deriving the algebraic linear system associated
with~\eqref{eq:IBVPVarxt}, and by describing 
the realization of the operator $\HT$ for a 
temporal $hp$-FEM.

For~\eqref{eq:modelproblem}, we solve
the discrete space-time variational formulation to
find $u^{MN} \in S^{\bmp,1}_{0,}(J;\cG^m_\sigma)\otimes S^1_{\Gamma_D}(\domain;\cT^N_\beta)$
such that
\be\label{eq:IBVPVarxtPerturbed}
\langle \partial_t u^{MN} , v \rangle_{L^2(Q)}
+
\langle \nabla_x u^{MN}, \nabla_x v \rangle_{L^2(Q)}
=
\langle \Pi^{MN} g, v \rangle_{L^2(Q)}
\ee
is satisfied for all $v \in (\HT S^{\bmp,1}_{0,}(J;\cG^m_\sigma))
\otimes S^1_{\Gamma_D}(\domain;\cT^N_\beta).$
Here, we use the notation of Section~\ref{sec:Approx} with
\be \label{Num:BasisVt}
    S^{\bmp,1}_{0,}(J;\cG^m_\sigma) =: V^M_t:= \mathrm{span}\{\varphi_l\}_{l=1}^M,
 \ee
and    
\[
    S^1_{\Gamma_D}(\domain;\cT^N_\beta)=: V^N_x:=\mathrm{span}\{\psi_i\}_{i=1}^N,
\]
where the functions $\varphi_l$ are basis functions in time, and the
functions $\psi_i$ are the usual nodal basis functions in space. The
total number of degrees of freedom is
\[
    M N = \dim \left( S^{\bmp,1}_{0,}(J;\cG^m_\sigma)\otimes S^1_{\Gamma_D}(\domain;\cT^N_\beta) \right).
\]
In
addition, for an easier implementation, we approximate the right-hand
side $g\in L^2(Q)$ by $g \approx \Pi^{MN} g$, where $\Pi^{MN} \colon
\, L^2(Q) \to S^{\bmp,1}(J;\cG^m_\sigma)\otimes
S^1(\domain;\cT^N_\beta)$ is the space-time $L^2(Q)$ projection,
namely $\Pi^{MN} g \in S^{\bmp,1}(J;\cG^m_\sigma)\otimes
S^1(\domain;\cT^N_\beta)$ is such that
\be \label{num:projRHS}
    \forall w \in S^{\bmp,1}(J;\cG^m_\sigma)\otimes S^1(\domain;\cT^N_\beta): \, \langle \Pi^{MN} g, w \rangle_{L^2(Q)} = \langle g, w \rangle_{L^2(Q)}.
\ee
Note that the spaces $S^1(\domain;\cT^N_\beta)$ and
$S^{\bmp,1}(J;\cG^m_\sigma)$ do not necessarily satisfy the
homogeneous Dirichlet and initial conditions, respectively; see
beginning of Sections~\ref{sec:hpApprJ} and~\ref{Sec:xAppr}.
We denote
the temporal mesh width (i.e., the maximal time-step size) by
$k_{\max} = \max_{j=1,\dots,m} k_j,$
the spatial mesh width by $h_x$, and the space-time mesh width by
$h_{xt} = \max \{ k_{\max}, h_x \}.$
The space-time error
$\| u - u^{MN} \|_{H^{1/2}_{0,}(J;L^2(\domain ))}$ 
mandates the numerical evaluation of the fractional order norm $\|
\circ \|_{H^{1/2}_{0,}(J;L^2(\domain ))}$. In order to overcome this
problem, we introduce the quantity 
\[
  [v]_{H^{1/2}_{0,}(J;L^2(\domain ))}
:= 
\sqrt{ \| v \|_{L^2(Q)} \cdot \| \partial_t v \|_{L^2(Q)} }, 
\]
which is defined for $v \in H^1_{0,}(J;L^2(\domain ))$,
and observe that, provided that $u \in H^1_{0,}(J;L^2(\domain ))$,
\[
 \| u - u^{MN} \|_{H^{1/2}_{0,}(J;L^2(\domain ))} 
  \leq  
  [u - u^{MN}]_{H^{1/2}_{0,}(J;L^2(\domain ))},
\]
due to the interpolation estimate (Lemma~\ref{lem:interpolationEstimate}).
Therefore, in the experiments below, 
instead of the space-time error 
$\| u - u^{MN} \|_{H^{1/2}_{0,}(J;L^2(\domain))}$, 
we consider its upper bound $[u- u^{MN}]_{H^{1/2}_{0,}(J;L^2(\domain ))}$,
which can be numerically evaluated via \emph{local} integration.

The fully discrete, space-time
variational formulation \eqref{eq:IBVPVarxtPerturbed} 
is equivalent to the global linear system
\be \label{eq:LS}
    B^{MN} \boldsymbol u = \boldsymbol{G},
\ee
with the system matrix
\[
  B^{MN}=A_t^{\HT} \otimes M_x + M_t^{\HT} \otimes A_x  \in \IR^{ M \cdot N \times M \cdot N},
\]
where $\otimes$ is the Kronecker product,
$M_x \in \IR^{N \times N}$ and $A_x \in \IR^{N \times N}$ denote
the spatial mass and stiffness matrices given by
\[
  M_x[i,j] = \langle \psi_j, \psi_i \rangle_{L^2(\domain)},
        \quad A_x[i,j] = \langle \nabla_x \psi_j, \nabla_x \psi_i
        \rangle_{L^2(\domain)} 
\]
for $i,j=1,\dots, N$,        
and $M_t^{\HT} \in \IR^{M \times M}$ and $A_t^{\HT} \in \IR^{M \times M}$ are defined by
\be \label{eq:matricesTime}
  M_t^{\HT}[k, l] := \spf{\varphi_l}{\HT \varphi_k}_{L^2(J)}, \quad
  A_t^{\HT}[k, l] := \spf{\partial_t \varphi_l}{\HT
    \varphi_k}_{L^2(J)}
\ee
for $k,l=1,\dots,M$. 
Note that, due to the nonlocality of $\HT$,
  the matrices $M_t^{\HT}$ and $A_t^{\HT}$ are densely populated.
Furthermore, the temporal stiffness matrix $A_t^{\HT}$ 
is symmetric (due to~\eqref{eq:HT3}) and positive definite (due
to~\eqref{eq:HT2}), whereas $M_t^{\HT}$ is nonsymmetric and
positive definite (due to~\eqref{eq:HT7}).
The assembling of the matrices  $M_t^{\HT}$ and $A_t^{\HT}$ is
described in Subsection~\ref{sec:NumHT} below.
For the right-hand side $\boldsymbol{G}$, the integrals for
computing the projection $\Pi^{MN} g$ in \eqref{num:projRHS} are
calculated by using high-order quadrature rules.
The global linear system \eqref{eq:LS} is 
solved in MATLAB by using the Bartels-Stewart method with real-Schur decomposition, 
see \cite[Algorithm~4.1]{langer2020efficient}. 
All calculations presented in this section were performed 
on a PC with two Intel Xeon E5-2687W v4 CPUs 3.00 GHz, i.e., 
in sum $24$ cores and $512$ GB main memory.

\subsection{Numerical Implementation of $\HT$}
\label{sec:NumHT}
We describe the assembling of the matrices
$M_t^{\HT}$ and $A_t^{\HT}$ in \eqref{eq:matricesTime}. The crucial
point is the realization of the modified Hilbert transformation $\HT$,
for which different possibilities exist, see
\cite{SteinbachZankNoteHT, ZankExactRealizationHT}. 
In particular, for
a uniform degree vector $\bmp := (p, p, \dots, p)$ with a fixed, low
polynomial degree $p \in \IN$, e.g., $p=1$ or $p=2$, the matrices $M_t^{\HT}$ and
$A_t^{\HT}$ in \eqref{eq:matricesTime} can be calculated using a
series expansion based on the \textit{Legendre chi function}, which
converges very fast, independently of the temporal mesh widths; see
\cite[Subsection~2.2]{ZankExactRealizationHT}. As for the temporal
$hp$-FEM the degree vector $\bmp$ is not uniform, it is convenient to
apply numerical quadrature rules to numerically approximate
the matrix entries.

From the integral representation of $\HT$,
\[
    (\HT v)(t) =
    - \frac{2}{\pi} v(0) \ln \tan \frac{\pi t}{4T} -\frac{1}{\pi} 
    \int_0^{T} \ln \left[ \tan \frac{\pi (s+t)}{4T} \tan \frac{\pi \abs{t-s}}{4T} \right] \partial_t v(s) \mathrm ds,
\]
$t \in J$, $v \in H^1(J),$ as a weakly singular integral, see
\cite[Lemma 2.1]{SteinbachZankNoteHT}, we have
\begin{align}
    A_t^{\HT}[k, l] &=
    \langle \partial_t \varphi_l , \HT \varphi_k
    \rangle_{L^2(J)} \nonumber \\ 
    &= - \frac{1}{\pi} \int_0^T \partial_t \varphi_l(t)
    \int_0^T \ln \left[ \tan \frac{\pi (s+t)}{4T} \tan \frac{\pi \abs{t-s}}{4T} \right] \, \partial_t \varphi_k(s) \, \mathrm ds \, \mathrm dt   \label{Num:Aht_entries}
\end{align}
and
\begin{align}
  M_t^{\HT}[k, l] &= \langle  \varphi_l , \HT \varphi_k \rangle_{L^2(J)} \nonumber \\
  &= - \frac{1}{\pi} \int_0^T \varphi_l(t) \int_0^T \ln \left[ \tan \frac{\pi (s+t)}{4T} \tan \frac{\pi \abs{t-s}}{4T} \right] \, \partial_t \varphi_k(s) \, \mathrm ds \, \mathrm dt  \label{Num:Mht_entries}
\end{align}
for $k,l=1,\ldots,M$, with the temporal basis functions $\varphi_l$ in~\eqref{Num:BasisVt}. 
In the following, we only describe how to compute the matrix entries
$M_t^{\HT}[k, l]$ in \eqref{Num:Mht_entries}, since the matrix entries
$A_t^{\HT}[k, l]$ in \eqref{Num:Aht_entries} can be computed in the
same way.

The matrix entries $M_t^{\HT}[k, l]$ in \eqref{Num:Mht_entries} are computed element-wise for the partition $\cG^m_\sigma = \{I_j\}_{j=1}^m$ of $J$ into time intervals
$I_j = (t_{j-1},t_j)\subset J$, $j=1,\dots, m$. Fix two time
intervals $I_i = (t_{i-1},t_i)$, $I_j = (t_{j-1},t_j)$ with indices
$i, j \in \{1,\dots,m\}$ and related local polynomial degrees $p_i,
p_j \in \IN$.
We define the local matrix $M_t^{\HT,i,j} \in \IR^{(p_i+1) \times (p_j+1)}$ by
\begin{multline} \label{Num:Mht_local}
 M_t^{\HT,i,j}[\kappa, \ell] = \\
 - \frac{1}{\pi} \int_{t_{j-1}}^{t_j} \varphi_{\alpha(\ell,j)}(t) 
\int_{t_{i-1}}^{t_i} \ln \left[ \tan \frac{\pi (s+t)}{4T} \tan \frac{\pi \abs{t-s}}{4T} \right] \, 
                                   \partial_t \varphi_{\alpha(\kappa,i)}(s) \, \mathrm ds \, \mathrm dt
\end{multline}
for $\kappa=1,\dots,p_i+1$ and $\ell=1,\dots,p_j+1$. 
Here,
$\alpha(\kappa,i) \in \{0,1,\dots,M\}$ is the global index related to the local index 
$\kappa$ for the time interval $I_i$; similarly for $\alpha(\ell,j)$.
Notice that the function $\varphi_0$, corresponding to the vertex $t=0$, 
does not contribute to the global matrix $M_t^{\HT}$. On the reference interval $(-1,1)$,
we use the Lobatto polynomials (or integrated Legendre polynomials) as hierarchical shape functions, i.e., 
we set
\[
N_1(\xi) = \frac{1-\xi}{2},
\quad  
N_2(\xi) = \frac{1+\xi}{2},
\quad 
N_\ell(\xi) = \int_{-1}^\xi
L_{\ell-2}(\zeta) \, \mathrm d\zeta \quad \text{ for }\ell \geq 3,
\]
$\xi \in [-1,1]$,
where $L_\ell$ denotes the $\ell$-th Legendre polynomial on $[-1,1]$,
see~\cite[Chapter~3]{Schwab98}. 
With these shape functions and the affine transformation $T_\iota\colon \, [-1,1] \to [t_{\iota-1},t_\iota]$ 
for $\iota \in \{1,\dots,m\}$, the entries \eqref{Num:Mht_local} 
of the local matrix $M_t^{\HT,i,j}$ are 
\begin{multline} \label{Num:Mht_local_referenceelement}
    M_t^{\HT,i,j}[\kappa, \ell] =\\
    - \frac{k_j}{2\pi} \int_{-1}^1
    N_\ell(\eta) \int_{-1}^1
    \ln \left[ \tan \frac{\pi (T_i(\xi) + T_j(\eta))}{4T} 
             \tan \frac{\pi \abs{T_j(\eta)-T_i(\xi)}}{4T} \right] \, N'_\kappa(\xi) \, \mathrm d\xi \, \mathrm d\eta
\end{multline}
for $\kappa=1,\dots,p_i+1$ and $\ell=1,\dots,p_j+1$, where $k_j = \abs{t_j - t_{j-1}}$ 
is the length of the time interval $I_j$. To compute the integrals in \eqref{Num:Mht_local_referenceelement}, 
we split these integrals into regular and singular parts, see \cite[Subsection~3.1]{SteinbachZankNoteHT}. 
For the regular parts, a tensor Gauss quadrature is applied. In \cite[Subsection~3.1]{SteinbachZankNoteHT}, 
it is proposed to calculate the singular parts analytically or with an adapted numerical integration. 
As the polynomial degrees $p_i, p_j$ may be high, we use the latter. 
The singularity of the singular parts is of logarithmic type. 
Thus, we apply so-called classical and nonclassical Gauss--Jacobi quadrature rules of order adapted to $p_i, p_j$, 
see
\cite[Eq. (1.6), (1.7)]{GautscheLog},
to the singular parts. 
These adapted integration rules allow us to calculate the singular
parts exactly.
In summary, the matrix entries of the matrices $M_t^{\HT}$ and $A_t^{\HT}$ in \eqref{eq:matricesTime} are computable
to high float point accuracy efficiently.

\subsection{Numerical Examples in 1D}
\label{sec:Num1D}
We present a numerical example in the one-dimensional spatial domain
$\domain = (0,1) \subset \IR$ with final time $T=2$, i.e.,
$Q = J \times \domain = (0,2) \times (0,1)\subset \IR^2$.
We choose the constant right-hand side $g_1 \equiv 1$, 
for which the solution to problem~\eqref{eq:modelproblem} is given by the Fourier series
\be \label{Num:u1}
u_1(t,x) = \sum _{\eta=1}^{\infty} \frac{4 - 4 e^{-\pi ^2 (2 \eta-1)^2 t}}{\pi ^3 (2 \eta-1)^3}  
                                  \sin (\pi  (2 \eta-1) x), \quad (t,x) \in \overline{Q}.
\ee
In the calculation of the errors of the space-time Galerkin
approximation \eqref{eq:IBVPVarxtPerturbed}, we truncate the
series~\eqref{Num:u1} at $\eta=1000$.
For the spatial discretization, we choose a uniform initial mesh with mesh width $h_x$ and apply a uniform refinement strategy.

In the first test, we use a temporal mesh with mesh width
$k_{\max}=k_1=\dots=k_m$ and linear polynomials, i.e., $\bmp =
(1,\dots,1) \in \IN^m$. The errors and the estimated orders of
convergence (eoc) are reported in Table~\ref{Num:Tab:u1}.
We observe a reduced order of convergence, as the compatibility condition between the right-hand side $g_1 \equiv 1$ and the homogeneous initial condition is not satisfied.
Note that the forcing $g_1 \equiv 1$ satisfies the temporal analytic regularity \eqref{eq:TimeReg} for any $\varepsilon \in (0,1/2)$ with $\dtime=1$ and a constant $C=C(\varepsilon)$ depending on $\varepsilon$.
\begin{table}[h]
\begin{center}
\begin{tabular}{rcccccccccc} 
	\hline
 $MN$ & $h_x$ & $k_{\max}$ & $[u_1 - u_1^{MN}]_{H^{1/2}_{0,}(J;L^2(\domain ))}$ & eoc\\
		\hline   
          12 & 0.25000 & 0.50000 & 7.330e-02 &   -  \\ 
          56 & 0.12500 & 0.25000 & 3.423e-02 & 0.99 \\ 
         240 & 0.06250 & 0.12500 & 1.355e-02 & 1.27 \\ 
         992 & 0.03125 & 0.06250 & 5.396e-03 & 1.30 \\ 
        4032 & 0.01562 & 0.03125 & 2.267e-03 & 1.24 \\ 
       16256 & 0.00781 & 0.01562 & 9.531e-04 & 1.24 \\ 
       65280 & 0.00391 & 0.00781 & 4.004e-04 & 1.25 \\ 
      261632 & 0.00195 & 0.00391 & 1.682e-04 & 1.25 \\ 
     1047552 & 0.00098 & 0.00195 & 7.070e-05 & 1.25 \\ 
     4192256 & 0.00049 & 0.00098 & 2.971e-05 & 1.25 \\ 
		\hline
  \end{tabular}
  \caption{Numerical results with
    the space-time Galerkin
    approximation~\eqref{eq:IBVPVarxtPerturbed}
    for the 1D example with the right-hand side $g_1 \equiv 1$
and solution $u_1$ in \eqref{Num:u1}, for a uniform mesh refinement
strategy and piecewise linear polynomials both in space and time.} \label{Num:Tab:u1}
\end{center}
\end{table}
In the second test,
we use the temporal $hp$-approximation of
Subsection~\ref{sec:hpApprJ}. For this purpose, we apply a uniform
refinement strategy for the spatial discretization, i.e., the number
$N$ of degrees of freedom in the spatial discretization doubles with
each uniform refinement. Then, corresponding to a given spatial
discretization with parameter $N$, we choose the temporal mesh as in \eqref{def:tj} with
grading parameter $\sigma = 0.31$, slope parameter $\muhp = 2.0$, numbers of elements $m_1 = \lfloor 1.4 \cdot \ln N \rfloor$, $m_2 = 1$, and temporal polynomial degrees $\boldsymbol{p} \in \IN^m$ as in \eqref{def:pj}. This choice of the discretization parameters fulfills condition~\eqref{muhp} with $\muhp = 2.0 > \frac{345}{31 \sqrt{31}} \approx 1.99883$ and condition~\eqref{m2} with $m_2 = 1 > \frac{5}{2 \sqrt{31}} \approx 0.45$. In addition, this choice balances the terms of the error bound~\eqref{eq:hpxtErrBd}, i.e., the
total number of degrees of freedom $MN$ behaves like in
\eqref{ST:dofsBehavior}. 
The numerical results reported in Figure~\ref{Num:Fig:u1} 
confirm Theorem~\ref{thm:Conv}.
\begin{figure}
\begin{center}
    \begin{tikzpicture}[font=\footnotesize,scale=0.9]
			\begin{axis}[xlabel={degrees of freedom $MN$} ,ylabel={$[u_1 - u_1^{MN}]_{H^{1/2}_{0,}(J;L^2(\domain ))}$} ,ymode=log,xmode=log,
				legend entries={{ $\IP^1$-FEM uniform}, {temporal $hp$-FEM}, $(MN)^{-\frac 5 8} \sim h_{xt}^{1.25}$, $(MN)^{-1} \sim h_{xt}^2$},
				legend pos = {south west},
				legend style={cells={align=left}},
				mark size=3pt]
				\addplot[color=red,mark=otimes, mark options={scale=0.8},line width=1pt] plot file {Numerics/1du1/1dhFEMH12.txt};
                \addplot[color=blue,mark=square, mark options={scale=0.8},line width=1pt] plot file {Numerics/1du1/1dhpFEMH12.txt};
                \addplot[color=black] plot[samples=200,domain=100:8192256] {0.9/(x^(5/8))};
                \addplot[color=black, dashed] plot[samples=200,domain=10000:1092256] {2/(x^(1))};
			\end{axis}
    \end{tikzpicture}
    \caption{
Numerical results with the space-time Galerkin
approximation~\eqref{eq:IBVPVarxtPerturbed} for the 1D example
with the right-hand side $g_1 \equiv 1$ and solution $u_1$
in~\eqref{Num:u1}, for a spatial uniform mesh refinement and temporal
$\IP^1$-FEM approximations with uniform mesh refinement or with temporal
$hp$-FEM with geometric partition of~$J$ with
grading parameter $\sigma = 0.31$, 
slope parameter $\muhp = 2.0$, 
numbers of elements $m_1 = \lfloor 1.4 \cdot \ln N \rfloor$, $m_2 = 1$, 
and temporal polynomial degrees $\boldsymbol{p} \in \IN^m$ as in \eqref{def:pj}.
} 
\label{Num:Fig:u1}
\end{center}
\end{figure}
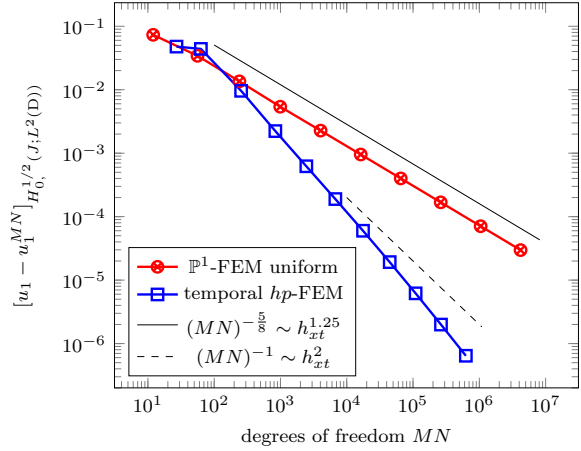

\subsection{Numerical Examples in 2D}
\label{sec:Num2D}
We present numerical examples in the two-dimensional spatial L-shaped domain
\[
  \domain =  (-1,1)^2 \setminus [0,1]^2 \subset \IR^2,
\]
and final time $T=2$, i.e., $Q = J \times \domain = (0,2) \times \domain \subset \IR^3$.

\subsubsection{Spatial Meshes}
For the spatial discretization, we consider uniformly refined
meshes, see Figure~\ref{Num:Fig:LShapeUniform}, or
meshes with corner-refinements towards the origin,
where in both cases, the mesh width $h_x$ decreases by a factor 2 with
each refinement.
\begin{figure}
\begin{center}
    \includegraphics[scale=0.5]{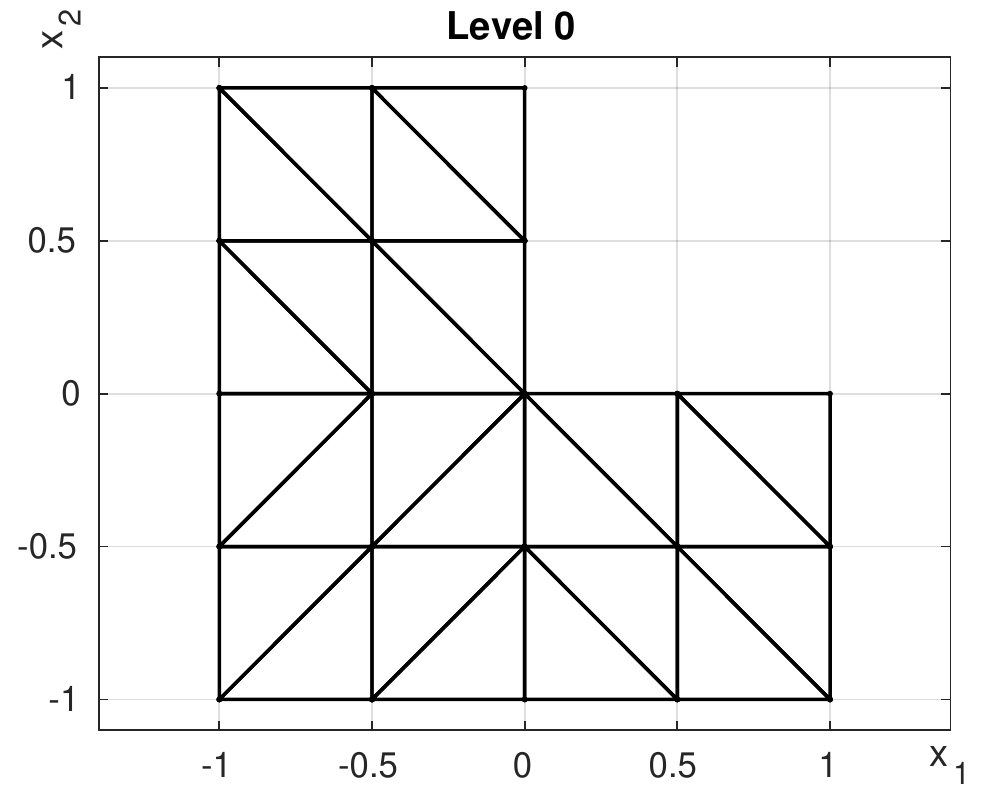}
    \includegraphics[scale=0.5]{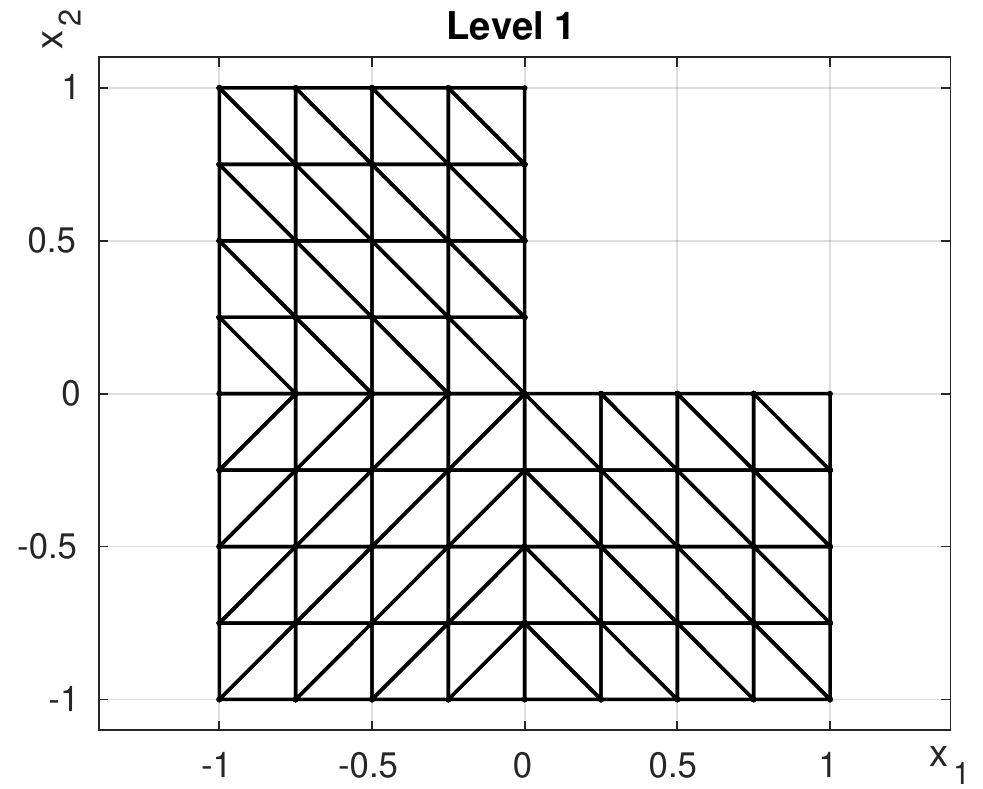}
  \caption{Spatial meshes with uniform refinement strategy: starting mesh and mesh after one refinement step.}
  \label{Num:Fig:LShapeUniform}
\end{center}
\end{figure}
As pointed out in Section~\ref{Sec:P1:d2}, 
spatial meshes with corner-refinements towards the origin are needed to ensure 
second-order convergence in $L^2(\domain )$ for $\IP^1$-FEM approximations in $\domain$. 
For a given maximal mesh width $h_x>0$, we construct
spatial meshes $\cT^N_\beta$ with corner-refinements towards the origin 
fulfilling the grading condition
\be \label{Num:CondGradedMeshes}
    \forall \omega \in \cT^N_\beta \colon \quad h_{x,\omega} \sim \begin{cases}
     h_x^{1/\beta}, & \mathrm{dist}(\omega, \boldsymbol 0) = 0, \\
     h_x \cdot \mathrm{dist}(\omega, \boldsymbol 0)^{1-\beta}, & 0 < \mathrm{dist}(\omega, \boldsymbol 0) \leq R,\\
     h_x, & \mathrm{dist}(\omega, \boldsymbol 0) > R, \end{cases}
\ee
where the mesh grading parameters $\beta \in (0,1]$ and $R > 0$ are fixed. 
Here, $h_{x,\omega}$ is the spatial mesh width of the triangle $\omega \in \cT^N_\beta$, 
and $\mathrm{dist}(\omega, \boldsymbol 0)$ is the distance of the triangle $\omega \in \cT^N_\beta$ 
from the origin $\boldsymbol 0$. 
To get a sequence of these graded spatial meshes, we halve the maximal mesh width $h_x$ 
and use the newest vertex bisection for the refinement, see Remark~\ref{rmk:BisTree}.
Figure~\ref{Num:Fig:LShapeGraded} shows the spatial graded meshes for
the first four levels of refinement with mesh 
grading parameters $\beta = 0.6$ and $R=0.25$, 
which are used in the remainder of this section.
\begin{figure}
\begin{center}
    \includegraphics[scale=0.5]{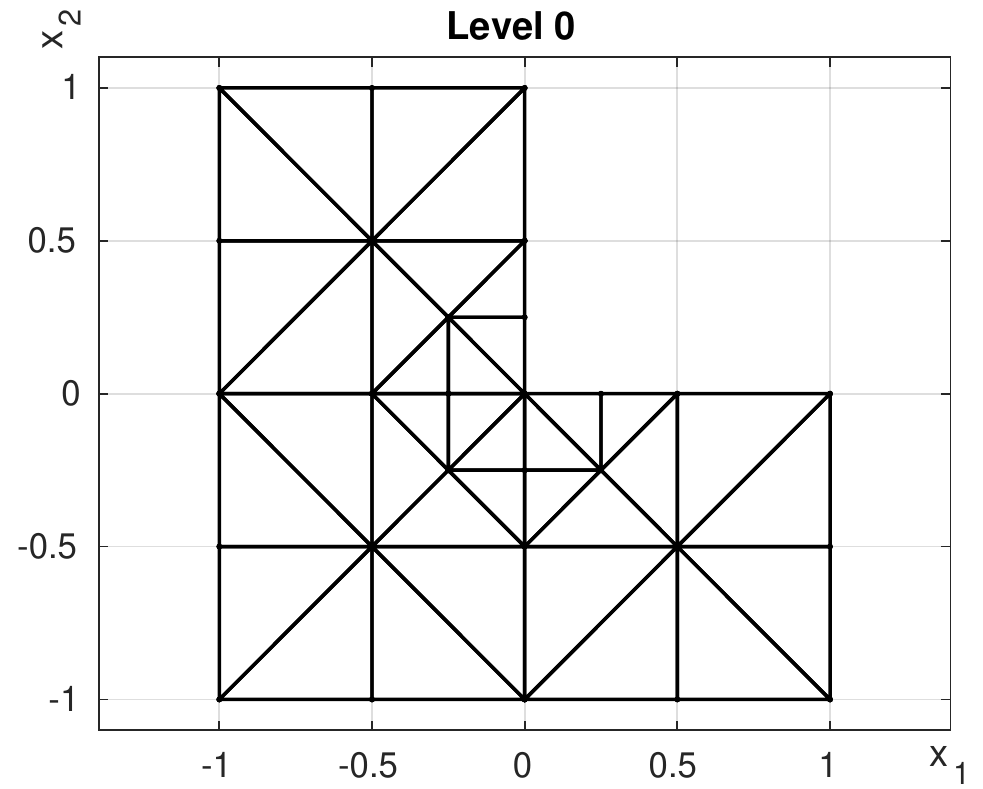}
    \includegraphics[scale=0.5]{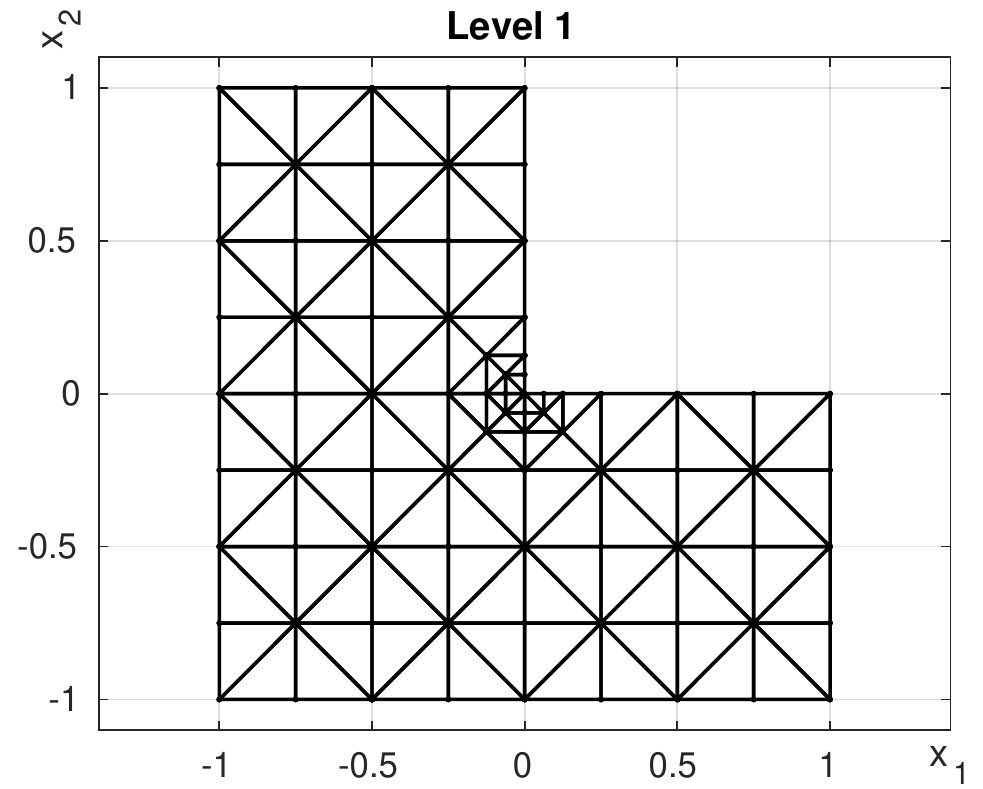}
    \includegraphics[scale=0.5]{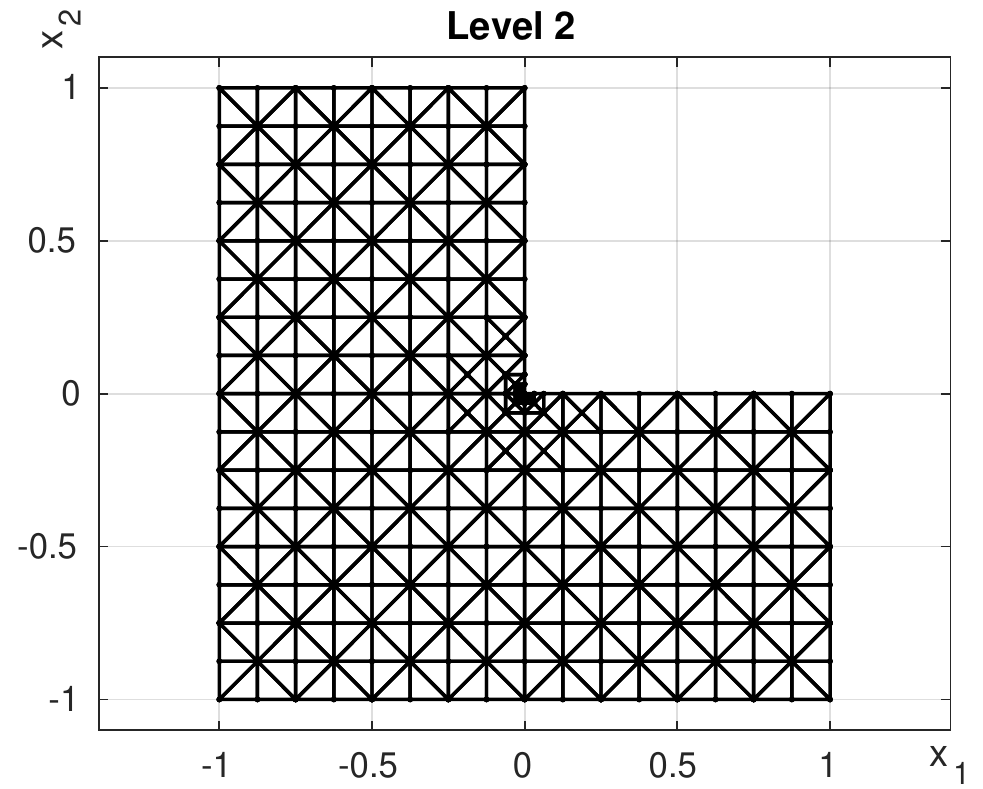}
    \includegraphics[scale=0.5]{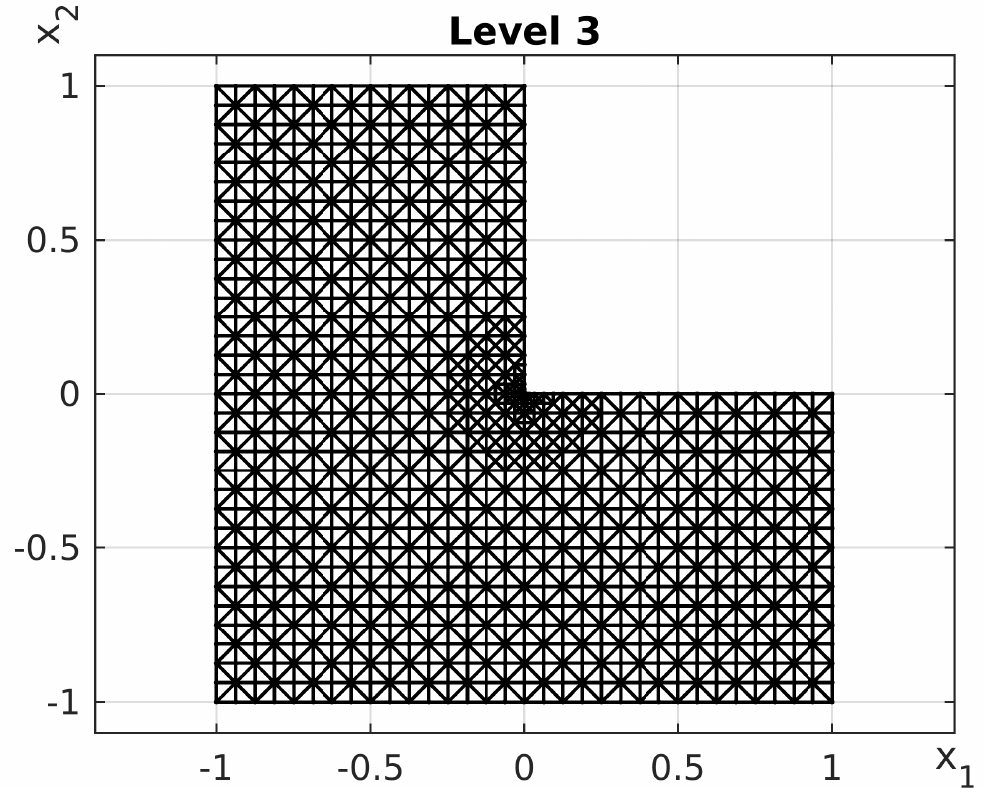}
  \caption{Spatial meshes with corner-refinements towards the origin 
           fulfilling the grading condition \eqref{Num:CondGradedMeshes} with parameters $\beta=0.6$ and $R=0.25$.}
  \label{Num:Fig:LShapeGraded}
\end{center}
\end{figure}

\subsubsection{Spatially Singular Solution}
\label{sec:2dSingSol}
We consider the manufactured solution
\be \label{Num:u2}
    u_2(t,x_1,x_2) 
    = 
    u_{\mathrm{reg}}(t,x_1,x_2) + t \mathrm{e}^{-t} \eta(x_1,x_2) \cdot r(x_1,x_2)^{2/3} 
    \cdot \sin \left(\frac2 3 \left( \arg(x_1,x_2) - \frac{\pi}{2} \right) \right)
\ee
for $(t,x_1,x_2) \in \overline{Q}$ with the smooth part
\be \label{Num:u2reg}
    u_{\mathrm{reg}}(t,x_1,x_2) = \frac{1}{100} t \sin (\pi  x_1) \sin (\pi x_2) \mathrm e^{ -t \left(x_1-\frac{1}{4}\right)^2 - t\left(x_2+\frac{1}{4}\right)^2}, \quad (t,x_1,x_2) \in \overline{Q},
\ee
where $r(x_1,x_2) \in [0,\infty)$ is
the radial coordinate, $\arg(x_1,x_2) \in (0,2\pi]$ is the angular
coordinate, and the cutoff function $\eta
\in C^2(\IR^2)$ is given by
\be \label{Num:Cutoff}
    \eta(x_1,x_2) := \begin{cases}
                        1, & r(x_1,x_2) \leq 1/4, \\
                         \frac{27}{8} -\frac{135}{4} r(x_1,x_2) +180 r(x_1,x_2)^2 \\
                         \quad -440 r(x_1,x_2)^3+480 r(x_1,x_2)^4 \\
                         \quad -192 r(x_1,x_2)^5, & 1/4 < r(x_1,x_2) \leq 3/4, \\
                        0, & 3/4 < r(x_1,x_2).
                     \end{cases}
\ee
Note that the solution $u_2$ is smooth in time but has a corner
singularity in space, which leads to reduced convergence rates,
when the spatial meshes are
refined uniformly. Hence, we use the spatial graded meshes as in
Figure~\ref{Num:Fig:LShapeGraded} in order to recover maximal
convergence rates. We point out that, in numerical tests not reported here, we have verified that, 
for a Poisson problem with a solution of regularity
as the regularity in space of $u_2$ in~\eqref{Num:u2}, one obtains for the $L^2(\domain)$ error
convergence rates $N^{-2/3}\sim h_x^{4/3}$ with uniform meshes, and $N^{-1}\sim h_x^{2}$
with the considered graded meshes.

For the temporal discretizations, we use $\IP^1$-FEM approximation
on uniformly refined meshes, or $p$-FEM for a fixed number $m=4$ of elements.
In connection with the spatial uniform or graded meshes as in
Figure~\ref{Num:Fig:LShapeUniform}, Figure~\ref{Num:Fig:LShapeGraded},
respectively, we investigate four possibilities:
\emph{i)} uniform mesh refinement both in space and in time, 
\emph{ii)} uniform mesh refinement in space and $p$-FEM in time,
\emph{iii)} graded meshes in space and uniform mesh refinement in
time,
\emph{iv)} graded meshes in space and $p$-FEM in time.
For all four cases, the numerical results for the space-time
Galerkin approximation \eqref{eq:IBVPVarxtPerturbed} of the solution
$u_2$ are reported in Figure~\ref{Num:Fig:u2}.
For a given spatial discretization parameter $N$ and $m = 4$ temporal elements, 
we choose the temporal polynomial degrees $\boldsymbol{p} = (p,p,p,p)$ with $p=\lfloor \frac{\ln N}{2} \rfloor$. 
This choice of the discretization parameters balances the terms of the error bound \eqref{pxtErrBd}. 
Hence, the total number of degrees of freedom $MN$ behaves like in
\eqref{ST:dofsBehaviorpFEM}. The numerical results in
Figure~\ref{Num:Fig:u2} confirm Remark~\ref{remk:pFEM}.

\begin{figure}
\begin{center}
    \begin{tikzpicture}[font=\footnotesize,scale=0.9]
			\begin{axis}[xlabel={degrees of freedom $MN$} ,ylabel={$[u_2 - u_2^{MN}]_{H^{1/2}_{0,}(J;L^2(\domain ))}$} ,ymode=log,xmode=log,
				legend entries={{uniform in $xt$}, {uniform in $x$, \\ $p$-FEM in $t$}, {graded in $x$, \\ uniform in $t$}, {graded in $x$, \\ $p$-FEM in $t$}, $(MN)^{-\frac 1 2} \sim h_{xt}^{\frac 3 2}$, $(MN)^{-\frac 2 3} \sim h_{xt}^2$},
                legend pos = {south west},
				legend style={cells={align=left}},
				mark size=3pt]
				\addplot[color=magenta,mark=star, mark options={scale=0.8},line width=1pt] plot file {Numerics/2du2/huniform/heatHilbertTriCylTikzH12.tex};
				\addplot[color=blue,mark=x, mark options={scale=0.8},line width=1pt] plot file {Numerics/2du2/puniform/heatHilbertTriCylTikzH12.tex};
                \addplot[color=orange,mark=square, mark options={scale=0.8},line width=1pt] plot file {Numerics/2du2/hgradedNVBmu06/heatHilbertTriCylTikzH12.tex};
                \addplot[color=red,mark=o, mark options={scale=0.6},line width=1pt] plot file {Numerics/2du2/pgradedNVBmu06/heatHilbertTriCylTikzH12.tex};
                \addplot[color=black] plot[samples=200,domain=100000:803210240] {0.35/(x^(1/2))};
                \addplot[color=black, dashed] plot[samples=200,domain=2000000:707825152] {0.3/(x^(2/3))};
			\end{axis}
    \end{tikzpicture}
    \caption{
    Numerical results with the space-time Galerkin
      approximation \eqref{eq:IBVPVarxtPerturbed} for the 2D
        example with the singular-in-space solution
      $u_2$ in \eqref{Num:u2},
      for all combinations of
        uniform mesh refinement or graded meshes in space 
        (with grading parameter $\beta=0.6$),
        and $\IP^1$-FEM with uniform mesh refinement or $p$-FEM in
        time.
       For the $p$-FEM in time, for a spatial discretization of parameter $N$,
       we use a fixed mesh with $m = 4$ elements and polynomial
        degrees $\boldsymbol{p} = (p,p,p,p)$ with $p=\lfloor \frac{\ln N}{2} \rfloor$.
    }
\label{Num:Fig:u2}    
\end{center}
\end{figure}
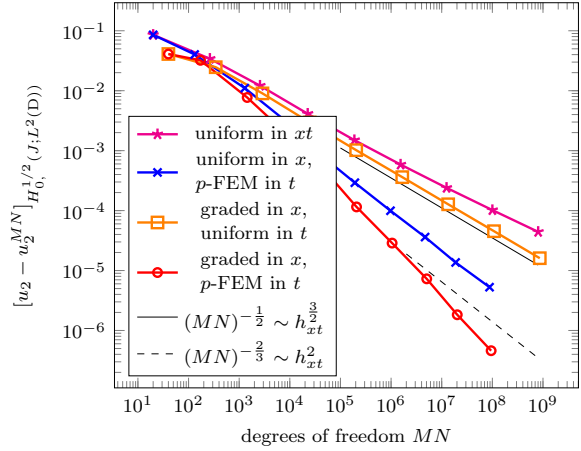

\subsubsection{Singular Solution}
We consider the singular solution
\begin{multline} \label{Num:u3}
    u_3(t,x_1,x_2) = u_{\mathrm{reg}}(t,x_1,x_2) \\
        + t^{3/5} \mathrm{e}^{-t} \eta(x_1,x_2) \cdot r(x_1,x_2)^{2/3} \cdot \sin \left(\frac2 3 \left( \arg(x_1,x_2) - \frac{\pi}{2} \right) \right)
\end{multline}
for $(t,x_1,x_2) \in \overline{Q}$ with the smooth part
$u_{\mathrm{reg}}$ in \eqref{Num:u2reg}, the radial coordinate
$r(x_1,x_2) \in [0,\infty)$, the angular coordinate $\arg(x_1,x_2) \in
(0,2\pi]$, and the cutoff function $\eta \in C^2(\IR^2)$ in
\eqref{Num:Cutoff}. This solution has a temporal singularity at $t=0$.
We observe that the corresponding
right-hand side $g_3$ does not fulfill the temporal analytic
regularity \eqref{eq:TimeReg}. On the other hand, solutions with a
singular behavior as $u_3$ are possible even for sources $g$, which
satisfy the condition~\eqref{eq:TimeReg}. 
As closed-form representations of such singular solutions
do not seem to be available, we perform our numerical tests
with the manufactured solution $u_3$ in \eqref{Num:u3}. 
Furthermore, the solution $u_3$ has the same spatial singularity as
$u_2$ in \eqref{Num:u2}.
Thus, in order to get the full convergence rates, we use the graded meshes in Figure~\ref{Num:Fig:LShapeGraded} 
for the spatial discretization, and a temporal $hp$-FEM. 
We investigate four possibilities:
\emph{i)} uniform mesh refinement both in space and in time, 
\emph{ii)} uniform mesh refinement in space and $hp$-FEM in time,
\emph{iii)} graded meshes in space and uniform mesh refinement in
time,
\emph{iv)} graded meshes in space and $hp$-FEM in time.
For all four cases, the numerical results for the space-time
Galerkin approximation \eqref{eq:IBVPVarxtPerturbed} of the solution
$u_3$ are reported in Figure~\ref{Num:Fig:u3}.
For a given spatial discretization parameter $N$, we choose the temporal mesh as in \eqref{def:tj} with
grading parameter $\sigma = 0.17$, slope parameter $\muhp = 1.0$, numbers of elements $m_1 = \lfloor 2.2 \cdot \ln N \rfloor$, $m_2 = 1$, and temporal polynomial degrees $\boldsymbol{p} \in \IN^m$ as in \eqref{def:pj}.
This choice of the discretization parameters balances the terms of the error bound~\eqref{eq:hpxtErrBd}. 
Hence, the total number of degrees of freedom $MN$ behaves like in \eqref{ST:dofsBehavior}. 
The numerical results in Figure~\ref{Num:Fig:u3} are in accordance with Theorem~\ref{thm:Conv},
when the temporal analytic regularity condition \eqref{eq:TimeReg}, and hence, the conditions on parameters $\muhp$, $m_2$, i.e.,  \eqref{muhp}, \eqref{m2}, are ignored.

\begin{figure}
\begin{center}
    \begin{tikzpicture}[font=\footnotesize,scale=0.9]
			\begin{axis}[xlabel={degrees of freedom $MN$} ,ylabel={$[u_3 - u_3^{MN}]_{H^{1/2}_{0,}(J;L^2(\domain ))}$} ,ymode=log,xmode=log,
				legend entries={{uniform in $xt$}, {uniform in $x$, \\ $hp$-FEM in $t$}, {graded in $x$, \\
				uniform in $t$}, {graded in $x$, \\ $hp$-FEM in $t$}, $(MN)^{-\frac 1 5} \sim h_{xt}^{0.6}$, $(MN)^{-\frac 2 3} \sim h_{xt}^2$},
				legend pos = {south west},
				legend style={cells={align=left}},
				mark size=3pt]
				\addplot[color=purple,mark=otimes, mark options={scale=0.8},line width=1pt] plot file {Numerics/2du3/huniform/heatHilbertTriCylTikzH12.tex};
				\addplot[color=blue,mark=x, mark options={scale=0.8},line width=1pt] plot file {Numerics/2du3/hpuniform/heatHilbertTriCylTikzH12.tex};
                \addplot[color=orange,mark=square, mark options={scale=0.8},line width=1pt] plot file {Numerics/2du3/hgradedNVBmu06/heatHilbertTriCylTikzH12.tex};
                \addplot[color=red,mark=o, mark options={scale=0.6},line width=1pt] plot file {Numerics/2du3/hpgradedNVBmu06/heatHilbertTriCylTikzH12.tex};
                \addplot[color=black] plot[samples=200,domain=300000:803210240] {0.07/(x^(1/5))};
                \addplot[color=black, dashed] plot[samples=200,domain=10000000:1003210240] {35/(x^(2/3))};
			\end{axis}
    \end{tikzpicture}
    \caption{
    Numerical results with the space-time Galerkin
      approximation \eqref{eq:IBVPVarxtPerturbed} for the 2D
        example with the singular solution
      $u_3$ in \eqref{Num:u3},
      for all combinations of
        uniform mesh refinement or graded meshes in space (with
        grading parameter $\beta=0.6$),
        and $\IP^1$-FEM with uniform mesh refinement or $hp$-FEM in
        time.
       For the $hp$-FEM in time, for a spatial discretization of parameter $N$,
       we use a geometric temporal mesh with subdivision
       parameter $\sigma = 0.17$, slope parameter $\muhp = 1.0$, 
       numbers of elements $m_1 = \lfloor 2.2 \cdot \ln N \rfloor$, $m_2 = 1$, 
       and temporal polynomial degrees $\boldsymbol{p} \in \IN^m$ as in \eqref{def:pj}.
        }
\label{Num:Fig:u3}
\end{center}
\end{figure}

%

\section{Conclusion}
\label{sec:Concl}
Based on a variational space-time formulation of the IBVP \eqref{eq:IBVP}--\eqref{eq:BC},
we analyzed tensorized discretization consisting of an exponentially convergent
time-discretization of $hp$-type, combined with a first-order Lagrangian FEM in the 
spatial domain, with corner-mesh refinement to account for the
presence of spatial singularities.
Stability of the considered discretization scheme is 
achieved by Hilbert-transforming the temporal $hp$-trial spaces. 
Details on the efficient, exponentially accurate,
numerical realization of this transformation were presented.
Several numerical examples in space dimension $d=2$ in nonconvex polygonal domains
confirmed the asymptotic error bounds.
In effect, the overall number of degrees of freedom scales essentially 
as those for one instance of the spatial problem.

The presented proof of time-analyticity via eigenfunction expansions is limited to self-adjoint,
elliptic spatial differential operators.
Nonselfadjoint spatial operators which are $t$-independent 
allow similar analytic regularity results 
via semigroup theory (see, e.g., \cite{DSChS2001}).

The adopted space-time formulation and its operator perspective 
and the error analysis extend \emph{verbatim} to self-adjoint, 
elliptic spatial operators of positive order.
Also, certain nonlinear evolution equations allow for corresponding formulations
(see, e.g., \cite{ScSt17}).
Moreover, transmission problems with piecewise Lipschitz coefficients in the spatial
operators can be covered (with the local mesh refinement also at multi-material
interface points).

The present error analysis with the same convergence rates is readily
extended to a coefficient in the temporal derivative that is time-dependent and analytic in~$[0,T]$.

We finally remark that the presently adopted space-time variational formulation 
will also allow for \emph{a~posteriori} time-discretization error estimation, 
which is reliable and robust uniformly with respect to $p$. 
Details shall be developed elsewhere.

\bibliographystyle{amsplain}
\bibliography{xthp}{}

\appendix
%
\section{Some Properties of $H^{1/2}_{0,}(a,b)$} \label{sec:NormEquivalence}
In this appendix, 
we provide proofs of Lemma~\ref{lem:NormEquivalence} and Lemma~\ref{lem:FractionalNormPointtau}
of Section~\ref{sec:FctSpc} concerning the Sobolev space $H^{1/2}_{0,}(a,b)$, and
state Poincar\'e and interpolation inequalities in $H^{1/2}_{0,}(a,b)$.

This result of Lemma~\ref{lem:NormEquivalence} is well-known, but 
we need to make explicit the dependency of the involved constants on the interval $(a,b)$, 
which is essential for the derivation of the temporal $hp$-error estimates in Section~\ref{sec:ConvRate}. 
For simplicity, we restrict to the case of real-valued functions $v \colon \, (a,b) \to \IR$.
All results and proofs can be generalized straightforwardly 
to $X$-valued functions $v \colon \, (a,b) \to X$ for a Hilbert space $X$. 
We introduce the following notation. 
For the classical Sobolev space
\begin{equation*}
    H^{1/2}(\IR) = (H^1(\IR),  L^2(\IR))_{1/2,2},
\end{equation*}
where $H^1(\IR)$ is equipped with the norm 
$\| \circ \|_{H^1(\IR)} = ( \| \circ \|_{L^2(\IR)}^2 + \| \partial_t \circ \|_{L^2(\IR)}^2 )^{1/2}$, 
we consider the interpolation norm $\| \circ \|_{H^{1/2}(\IR)}$ and the Slobodetskii norm
\[
\normiii{v}_{H^{1/2}(\IR)} 
:= 
\left(
 \| v \|_{L^2(\IR)}^2
+ 
 |v|_{H^{1/2}(\IR)}^2 
 \right)^{1/2} 
\]
for $v \in H^{1/2}(\IR)$, with
\[
|v|_{H^{1/2}(\IR)} 
:= 
\left( \int_{-\infty}^{\infty} \int_{-\infty}^{\infty} \frac{|v(s)-v(t)|^2}{|s-t|^2} \mathrm ds \mathrm dt \right)^{1/2}.
\]

\begin{proof}[Proof of Lemma~\ref{lem:NormEquivalence}]
The equivalence of norms is proven in, e.g., \cite{McLean2000}. 
We give more details about the norm equivalence constants. 
For this purpose, 
we introduce an extension operator and establish bounds on its norm.
Define
$\mathcal E_1 \colon \, H^1_{0,}(a,b) \to H^1(\IR)$,
    \begin{equation*}
        \mathcal E_1 v(t) := \begin{cases}
                                v(t),       & t \in [a,b], \\
                                v(2b-t),    & t \in (b,2b-a], \\
                                0,          & \text{otherwise}
                           \end{cases}
    \end{equation*}
    for $v \in H^1_{0,}(a,b)$. 
The mapping $\mathcal E_0 \colon \, L^2(a,b) \to L^2(\IR)$ 
is defined for $v \in L^2(a,b)$ 
as
    \begin{equation*}
        \mathcal E_0 v(t) := \begin{cases}
                                v(t),       & t \in (a,b), \\
                                v(2b-t),    & t \in (b,2b-a), \\
                                0.          & \text{otherwise}
                           \end{cases}
    \end{equation*}
Evidently, $\mathcal E_1 v = \mathcal E_0 v$ for $v \in H^1_{0,}(a,b)$. 
Next, for $v \in L^2(a,b)$, 
    \begin{equation*}
        \norm{\mathcal E_0 v}_{L^2(\IR)}^2 
        = 
        \int_a^b \abs{v(t)}^2 \mathrm dt +  \int_b^{2b-a} \abs{v(2b-t)}^2 \mathrm dt = 2 \norm{v}_{L^2(a,b)}^2
    \end{equation*}
    and, for $v \in H^1_{0,}(a,b)$, 
    \begin{equation*}
        \norm{\partial_t \mathcal E_1 v}_{L^2(\IR)}^2 
      = \int_a^b \abs{\partial_t v(t)}^2 \mathrm dt +  \int_b^{2b-a} \abs{\partial_t v(2b-t)}^2 \mathrm dt 
      = 2 \norm{\partial_t v}_{L^2(a,b)}^2 \;.
    \end{equation*}
    Hence, for $v \in H^1_{0,}(a,b)$, it holds true that
    \begin{equation*}
         \norm{\mathcal E_1 v}_{H^1(\IR)}^2 
         = 2 \norm{v}_{H^1(a,b)}^2  
        \leq 2 \left( 1 + \frac{4(b-a)^2}{\pi^2} \right) \norm{\partial_t v}_{L^2(a,b)}^2,
    \end{equation*}
    where the Poincar\'e inequality (see Lemma~\ref{lem:Poincare} below) 
    is used in the last step. 
    Interpolation yields an operator 
    $\mathcal E_{1/2} \colon \, H^{1/2}_{0,}(a,b) \to H^{1/2}(\IR)$ 
    with $\mathcal E_{1/2} v = \mathcal E_0 v$ for $v \in H^{1/2}_{0,}(a,b)$ 
    and
    \begin{equation} \label{NormEquivalenceExtension}
        \forall v \in H^{1/2}_{0,}(a,b) : \quad  \norm{\mathcal E_{1/2}v}_{H^{1/2}(\IR)}^2 \leq 2 \sqrt{ 1 + \frac{4(b-a)^2}{\pi^2}} \norm{v}_{H^{1/2}_{0,}(a,b)}^2.
    \end{equation}
    Next, we estimate $\normiii{\mathcal E_{1/2}v}_{H^{1/2}(\IR)}$ for $v \in H^{1/2}_{0,}(a,b)$. For this purpose, we compute
    \begin{align}
        \abs{\mathcal E_{1/2} v}_{H^{1/2}(\IR)}^2 &=  \int_a^\infty \int_a^\infty()  +2 \int_a^\infty \int_{-\infty}^a () + \int_{-\infty}^a \int_{-\infty}^a () \nonumber \\
        &= \abs{(\mathcal E_{1/2} v)_{|(a,\infty)}}_{H^{1/2}(a,\infty)}^2 + 2 \int_a^\infty \int_{-\infty}^a \frac{|\mathcal E_{1/2}v(t)|^2}{|s-t|^{2}} \mathrm ds  \mathrm dt + 0 \nonumber \\
        &=\abs{(\mathcal E_{1/2} v)_{|(a,\infty)}}_{H^{1/2}(a,\infty)}^2 + 2 \int_a^\infty \frac{\abs{\mathcal E_{1/2}v(t)}^2}{t-a}  \mathrm dt \label{NormEquivalenceRepresentationZero}
    \end{align}
    for $v \in H^{1/2}_{0,}(a,b)$, where the seminorm $|\circ|_{H^{1/2}(a,\infty)}$ 
    is defined by \eqref{SlobodetskiiSemi} with $b=\infty$.
    The integral in the bound 
    \eqref{NormEquivalenceRepresentationZero} 
    is finite due to $v \in H^{1/2}_{0,}(a,b)$, cf. \eqref{Sob:NormTriple}. 
    Thus, we get
    \begin{multline*}
     \normiii{\mathcal E_{1/2}v}_{H^{1/2}(\IR)}^2 
     = 
     2 \norm{v}_{L^2(a,b)}^2 + \abs{(\mathcal E_{1/2} v)_{|(a,\infty)}}_{H^{1/2}(a,\infty)}^2 
   + 2 \int_a^\infty \frac{\abs{\mathcal E_{1/2}v(t)}^2}{t-a}  \mathrm dt 
     \\
     = 2 \norm{v}_{L^2(a,b)}^2 + \abs{v}_{H^{1/2}(a,b)}^2 +  2 \int_b^\infty \int_a^b () + \int_b^\infty \int_b^\infty() + 2 \int_a^\infty \frac{\abs{\mathcal E_{1/2}v(t)}^2}{t-a} \mathrm dt.
    \end{multline*}
    The third term on the right side is bounded by
    \begin{align*}
        2 \int_b^\infty \int_a^b () &= 2 \int_b^{2b-a} \int_a^b \frac{|v(s) - v(2b-t)|^2}{|s-t|^{2}} \mathrm ds \mathrm dt + 2\int_{2b-a}^\infty \int_a^b \frac{|v(s)|^2}{|s-t|^{2}} \mathrm ds \mathrm dt \\
        &= 2 \int_a^b \int_a^b \frac{|v(s) - v(t)|^2}{\underbrace{ |2b-s-t|^{2}}_{\geq |s-t|^2}} \mathrm ds \mathrm dt + 2\int_a^b \frac{|v(s)|^2}{\underbrace{2b-a-s}_{\geq s-a}} \mathrm ds \\
        &\leq 2 \abs{v}_{H^{1/2}(a,b)}^2 + 2\int_a^b \frac{|v(s)|^2}{s-a} \mathrm ds,
    \end{align*}
    the fourth term is
    \begin{align*}
        &\int_b^\infty \int_b^\infty() = \int_b^{2b-a} \int_b^{2b-a}() + 2\int_{2b-a}^\infty \int_b^{2b-a}() +\int_{2b-a}^\infty \int_{2b-a}^\infty() \\
        &= \int_b^{2b-a} \int_b^{2b-a} \frac{|v(2b-s) - v(2b-t)|^2}{|s-t|^{2}} \mathrm ds \mathrm dt + 2 \int_{2b-a}^\infty \int_b^{2b-a} \frac{|v(2b-s)|^2}{|s-t|^{2}} \mathrm ds \mathrm dt + 0\\
        &= \abs{v}_{H^{1/2}(a,b)}^2 + 2 \int_b^{2b-a} \frac{|v(2b-s)|^2}{2b-a-s} \mathrm ds = \abs{v}_{H^{1/2}(a,b)}^2 +  2\int_a^b \frac{|v(s)|^2}{s-a} \mathrm ds,
    \end{align*}
    whereas for the fifth term, we have 
    \begin{align*}
        2 \int_a^\infty \frac{\abs{\mathcal E_{1/2}v(t)}^2}{t-a} \mathrm dt &= 2 \int_a^b \frac{\abs{v(t)}^2}{t-a} \mathrm dt + 2 \int_b^{2b-a} \frac{\abs{v(2b-t)}^2}{t-a} \mathrm dt \\
        &= 2 \int_a^b \frac{\abs{v(t)}^2}{t-a} \mathrm dt + 2 \int_a^b \frac{\abs{v(t)}^2}{\underbrace{2b-a-t}_{\geq t-a}} \mathrm dt  \leq 4\int_a^b \frac{\abs{v(t)}^2}{t-a} \mathrm dt.
    \end{align*}
    Using the above estimates gives for all $v \in H^{1/2}_{0,}(a,b)$
    \begin{align*}
         \normiii{\mathcal E_{1/2}v}_{H^{1/2}(\IR)}^2 \leq 2 \norm{v}_{L^2(a,b)}^2 + 4\abs{v}_{H^{1/2}(a,b)}^2 + 8\int_a^b \frac{\abs{v(t)}^2}{t-a} \mathrm dt \leq 8 \normiii{v}_{H^{1/2}_{0,}(a,b)}^2\;.
    \end{align*}
    With these properties, we have for all $v \in H^{1/2}_{0,}(a,b)$ the lower bound in the norm equivalence: 
    \begin{align*}
        \norm{v}_{H^{1/2}_{0,}(a,b)} \leq \norm{\mathcal E_{1/2}v}_{H^{1/2}(\IR)} \leq \frac{1}{C_{\IR,1}} \normiii{\mathcal E_{1/2}v}_{H^{1/2}(\IR)} \leq \frac{2\sqrt{2} }{C_{\IR,1}} \normiii{v}_{H^{1/2}_{0,}(a,b)}\;.
    \end{align*}
    Here, the first inequality is proven by interpolation, the second estimate follows from
    \begin{equation} \label{NormEquivalenceR}
        \forall z \in H^{1/2}(\IR) : \quad C_{\IR,1} \norm{z}_{H^{1/2}(\IR)} \leq  \normiii{z}_{H^{1/2}(\IR)} \leq  C_{\IR,2} \norm{z}_{H^{1/2}(\IR)} 
    \end{equation}
    with constants $C_{\IR,1}$, $C_{\IR,2} > 0$, see \cite[Theorem~B.7]{McLean2000} and \cite[Lemma~4.1]{Eskin1981}.

    For the upper bound, relations 
    \eqref{NormEquivalenceRepresentationZero}, \eqref{NormEquivalenceR} and \eqref{NormEquivalenceExtension} 
    yield
    \begin{align*}
        \normiii{v}_{H^{1/2}_{0,}(a,b)}^2 &\leq \norm{\mathcal E_{1/2}v}_{L^2(\IR)}^2 +\abs{(\mathcal E_{1/2} v)_{|(a,\infty)}}_{H^{1/2}(a,\infty)}^2 + \int_a^\infty \frac{\abs{\mathcal E_{1/2}v(t)}^2}{t-a}  \mathrm dt \\
        &= \norm{\mathcal E_{1/2}v}_{L^2(\IR)}^2 + \frac 12 \abs{(\mathcal E_{1/2} v)_{|(a,\infty)}}_{H^{1/2}(a,\infty)}^2 +  \frac 12 \abs{\mathcal E_{1/2} v}_{H^{1/2}(\IR)}^2 \\
        &\leq  \normiii{\mathcal E_{1/2}v}_{H^{1/2}(\IR)}^2 \\
        &\leq (C_{\IR,2})^2 \norm{\mathcal E_{1/2}v}_{H^{1/2}(\IR)}^2  \leq (C_{\IR,2})^2 2 \sqrt{ 1 + \frac{4(b-a)^2}{\pi^2}} \norm{v}_{H^{1/2}_{0,}(a,b)}^2,
    \end{align*}
    i.e., the assertion is proven.
\end{proof}
The following proof of Lemma~\ref{lem:FractionalNormPointtau} 
restricts the argument of~\cite{Faermann2000} to our particular case.
\begin{proof}[Proof of Lemma~\ref{lem:FractionalNormPointtau}]
 Let $v \in H^{1/2}(a,b)$ and $\tau \in (a,b$) be given. Then, we
 split the integral in the definition~\eqref{SlobodetskiiSemi} as follows:
 \begin{align*}
  | v |_{H^{1/2}(a,b)}^2 
&= \int_a^{\tau} \int_a^b \Big( \cdots \Big) \mathrm ds \mathrm dt + \int_{\tau}^b \int_a^b \Big( \cdots \Big) \mathrm ds \mathrm dt \\
  &= \int_a^{\tau} \int_a^{\tau} \Big( \cdots \Big) \mathrm ds \mathrm dt 
   + 2 \int_a^{\tau} \int_{\tau}^b \Big( \cdots \Big) \mathrm ds \mathrm dt + \int_{\tau}^b \int_{\tau}^b \Big( \cdots \Big) \mathrm ds \mathrm dt\\
&=| v |_{H^{1/2}(a,\tau)}^2+2 \int_a^{\tau} \int_{\tau}^b \Big( \cdots \Big) \mathrm ds \mathrm dt+| v |_{H^{1/2}(\tau,b)}^2.
 \end{align*}
For the integral on the right side, we get
 \begin{align*}
     2 \int_a^{\tau} \int_{\tau}^b \frac{|v(s)-v(t)|^2}{|s-t|^{2}} \mathrm ds \mathrm dt 
     &\leq 4 \int_a^{\tau} \int_{\tau}^b \frac{|v(s)|^2}{|s-t|^{2}} \mathrm ds \mathrm dt 
     + 4 \int_a^{\tau} \int_{\tau}^b \frac{|v(t)|^2}{|s-t|^{2}} \mathrm ds \mathrm dt \\
     &= 4  \int_{\tau}^b |v(s)|^2 [ (s-\tau)^{-1} - (s-a)^{-1} ] \mathrm ds \\
     &\quad + 4 \int_a^{\tau} |v(t)|^2 [(\tau-t)^{-1} - (b-t)^{-1}] \mathrm dt  \\
     &\leq 4 \int_{\tau}^b \frac{|v(s)|^2}{s-\tau} \mathrm ds + 4 \int_a^{\tau} \frac{|v(t)|^2}{\tau-t} \mathrm dt.
 \end{align*}
 Thus, the assertion follows.
\end{proof}

\begin{lemma} \label{lem:Poincare}
  For $a,b \in \IR$, $a<b$, the Poincar\'e inequalities
  \begin{align*}
    \forall v \in H^{1/2}_{0,}(a,b) : \quad &\| v \|_{L^2(a,b)} \leq \sqrt{\frac{2 (b-a)}{\pi}} \| v \|_{H^{1/2}_{0,}(a,b)}, \\
    \forall v \in H^1_{0,}(a,b) : \quad &\| v \|_{H^{1/2}_{0,}(a,b)} \leq \sqrt{\frac{2 (b-a)}{\pi}} \| \partial_t v \|_{L^2(a,b)}, \\
    \forall v \in H^1_{0,}(a,b) : \quad &\| v \|_{L^2(a,b)} \leq \frac{2 (b-a)}{\pi} \| \partial_t v \|_{L^2(a,b)}
  \end{align*}
  hold true, where the constants are sharp.
\end{lemma}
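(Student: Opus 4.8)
The plan is to diagonalize all three norms simultaneously in the Sturm--Liouville eigenbasis $\{V_k\}_{k\ge 0}$ of \eqref{time:eigenvalues}, thereby reducing each inequality to a termwise comparison of scalar weights. Writing $v=\sum_{k\ge 0} v_k V_k$ with $v_k=\int_a^b v\,V_k\,\mathrm dt$ and setting $\mu_k:=\frac{\pi(2k+1)}{2(b-a)}$ (so that $\lambda_k=\mu_k^2$), orthonormality of $\{V_k\}$ in $L^2(a,b)$ gives $\|v\|_{L^2(a,b)}^2=\sum_k|v_k|^2$, while the norm \eqref{eq:H120def} reads $\|v\|_{H^{1/2}_{0,}(a,b)}^2=\sum_k \mu_k|v_k|^2$.

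First I would compute $\|\partial_t v\|_{L^2(a,b)}^2$ for $v\in H^1_{0,}(a,b)$. The key step is the identity $\int_a^b \partial_t v\,\partial_t V_k\,\mathrm dt=\lambda_k v_k$: integrating by parts and invoking the eigenvalue equation $-\partial_{tt}V_k=\lambda_k V_k$, the boundary term $[v\,\partial_t V_k]_a^b$ vanishes because $\partial_t V_k(b)=0$ and $v(a)=0$ (notably, no condition on $v$ at $b$ is needed, consistent with $v\in H^1_{0,}$). Consequently the functions $W_k:=\lambda_k^{-1/2}\partial_t V_k$ are orthonormal in $L^2(a,b)$ (a second integration by parts yields $\langle W_k,W_\ell\rangle_{L^2(a,b)}=\delta_{k\ell}$), and the $k$-th coefficient of $\partial_t v$ in this system equals $\lambda_k^{1/2}v_k$. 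Bessel's inequality then gives $\|\partial_t v\|_{L^2(a,b)}^2\ge\sum_k\lambda_k|v_k|^2=\sum_k\mu_k^2|v_k|^2$; in fact equality holds, since $\{W_k\}$ is the complete Neumann--Dirichlet cosine system, but only this lower bound is required below.

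With the three spectral representations in hand, each inequality reduces to the elementary bound $\frac{2(b-a)}{\pi}\mu_k=2k+1\ge 1$, valid for every $k\ge 0$. The first inequality is $\sum_k|v_k|^2\le\frac{2(b-a)}{\pi}\sum_k\mu_k|v_k|^2$; the second is $\sum_k\mu_k|v_k|^2\le\frac{2(b-a)}{\pi}\sum_k\mu_k^2|v_k|^2$; the third follows either from the analogous comparison with weight $\frac{4(b-a)^2}{\pi^2}\mu_k^2=(2k+1)^2\ge 1$, or simply by chaining the first two. Sharpness of all three constants is then immediate by testing with $v=V_0$: only the $k=0$ term survives, $\frac{2(b-a)}{\pi}\mu_0=1$, and every comparison above becomes an equality.

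The only genuinely substantive step is the spectral identity for $\|\partial_t v\|_{L^2(a,b)}^2$; I expect the main care to lie in verifying that the boundary contributions in the integration by parts drop out under the mixed (Dirichlet at $a$, Neumann at $b$) conditions satisfied by $V_k$ together with $v(a)=0$, and in observing that only Bessel's inequality---not completeness of $\{W_k\}$---is needed for the required direction. Everything else is a routine comparison of nonnegative sequences.
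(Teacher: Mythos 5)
Your proof is correct and follows essentially the same route as the paper's: both diagonalize all three norms in the eigenbasis $\{V_k\}$ of \eqref{time:eigenvalues}, reduce each inequality to the termwise comparison $2k+1\ge 1$, and obtain sharpness by testing with $V_0$. The only difference is that you justify the spectral representation of $\|\partial_t v\|_{L^2(a,b)}^2$ explicitly via integration by parts and Bessel's inequality, whereas the paper simply invokes the Fourier representations \eqref{norm:representations}; your added detail is sound and does not change the argument.
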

\begin{proof}
    By interpolation, we have the Fourier series representations
    \be \label{norm:representations}
        \| v \|_{L^2(a,b)}^2 = \sum_{k=0}^\infty |v_k|^2, \quad  \| v \|_{H^{1/2}_{0,}(a,b)}^2 = \sum_{k=0}^\infty \sqrt{\lambda_k} |v_k|^2, \quad  \| \partial_t v \|_{L^2(a,b)}^2 = \sum_{k=0}^\infty \lambda_k |v_k|^2
    \ee
    with coefficients $v_k$ as in \eqref{eq:FourierRepresentation} and eigenvalues $\lambda_k = \frac{\pi^2 (2k+1)^2}{4(b-a)^2}$ of the eigenvalue problem \eqref{time:eigenvalues}.
    Hence, all Poincar\'e inequalities follow from these representations. The constants are sharp since for $v$ with $v_0 \neq 0$ and $v_k=0$ for $k \in \IN$, equality holds true.
\end{proof}

\begin{lemma} \label{lem:interpolationEstimate}
    For $a,b \in \IR$ with $a<b$, the interpolation estimate
    \begin{equation*}
        \forall v \in H^1_{0,}(a,b) : \, \|v\|_{H^{1/2}_{0,}(a,b)} \leq \sqrt{ \| v \|_{L^2(a,b)} \| \partial_t v \|_{L^2(a,b)} }
    \end{equation*}
    holds true, where $\| \circ \|_{H^{1/2}_{0,}(a,b)}$ denotes the interpolation norm \eqref{eq:H120def}.
\end{lemma}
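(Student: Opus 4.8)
The plan is to diagonalize all three norms simultaneously via the eigenbasis $\{V_k\}_{k\in\IN_0}$ of~\eqref{time:eigenvalues} and then reduce the claim to the Cauchy--Schwarz inequality for sequences. The key observation is that the spectral representations
\[
\| v \|_{L^2(a,b)}^2 = \sum_{k=0}^\infty |v_k|^2, \qquad
\| v \|_{H^{1/2}_{0,}(a,b)}^2 = \sum_{k=0}^\infty \sqrt{\lambda_k}\, |v_k|^2, \qquad
\| \partial_t v \|_{L^2(a,b)}^2 = \sum_{k=0}^\infty \lambda_k\, |v_k|^2
\]
are already available from~\eqref{norm:representations} in the proof of Lemma~\ref{lem:Poincare}, with $\lambda_k = \tfrac{\pi^2(2k+1)^2}{4(b-a)^2}$; note that $\sqrt{\lambda_k} = \tfrac{\pi(2k+1)}{2(b-a)}$ matches the coefficient in the interpolation norm~\eqref{eq:H120def} exactly. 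For $v\in H^1_{0,}(a,b)$, the hypothesis guarantees $\partial_t v \in L^2(a,b)$, so the third series converges and the right-hand side of the asserted estimate is finite.

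The central step would then be to write each summand of the $H^{1/2}_{0,}$ norm as a product, $\sqrt{\lambda_k}\,|v_k|^2 = \big(|v_k|\big)\big(\sqrt{\lambda_k}\,|v_k|\big)$, and apply the Cauchy--Schwarz inequality on $\ell^2(\IN_0)$ to the sequences $(|v_k|)_k$ and $(\sqrt{\lambda_k}\,|v_k|)_k$. This yields
\[
\| v \|_{H^{1/2}_{0,}(a,b)}^2
= \sum_{k=0}^\infty \sqrt{\lambda_k}\,|v_k|^2
\leq \left(\sum_{k=0}^\infty |v_k|^2\right)^{1/2}\left(\sum_{k=0}^\infty \lambda_k\,|v_k|^2\right)^{1/2}
= \| v \|_{L^2(a,b)}\,\| \partial_t v \|_{L^2(a,b)}\;.
\]
Taking square roots gives the claimed bound.

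I do not anticipate any genuine obstacle here: once the spectral representations are in hand, the statement is precisely the sequence-space Cauchy--Schwarz inequality applied to the geometric interpolation $\sqrt{\lambda_k} = (1)^{1/2}(\lambda_k)^{1/2}$ of the exponents $0$ and $1$ between the $L^2$ and $H^1_{0,}$ norms, which is the $\ell^2$-shadow of the abstract interpolation inequality defining $H^{1/2}_{0,}(a,b)=(H^1_{0,}(a,b),L^2(a,b))_{1/2,2}$. The only point meriting a sentence of care is the invocation of~\eqref{norm:representations}, which was derived for real-valued $v$ but extends verbatim to $X$-valued functions by the tensorization remarks in Section~\ref{sec:FctSpc}, so the estimate also holds in the Bochner setting needed in Section~\ref{sec:NumExp}.
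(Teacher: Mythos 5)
Your proposal is correct and coincides with the paper's own argument: the paper likewise invokes the Fourier representations~\eqref{norm:representations} and concludes by the Cauchy--Schwarz inequality. The only difference is that you have written out the factorization $\sqrt{\lambda_k}\,|v_k|^2 = |v_k|\cdot\sqrt{\lambda_k}\,|v_k|$ explicitly, which the paper leaves implicit.
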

\begin{proof}
    Using the Cauchy--Schwarz inequality, the assertion follows immediately from the Fourier representations \eqref{norm:representations}.
\end{proof}

\section{Proof of Lemma~\ref{lem:H12X2Reg}} \label{sec:ProofLemH12X2Reg}

  Let $b \in (0,T]$ be fixed.
  According to~\eqref{eq:T'TBd} for $l=0$, the estimate 
  \begin{equation*}
    \forall t>0 \colon
    \norm{\semigroup(t)}_{\mathcal L(X_\varepsilon, X_2)}^2 \leq
    \frac{1}{\sqrt{2\pi}} \left(\frac{1}{2}\right)^{2-\varepsilon}
    \Gamma(3-\varepsilon)
    t^{-2+\varepsilon}.
  \end{equation*}
  holds true. 
  The logarithmic convexity of the gamma function gives
  $\Gamma(3-\varepsilon)=\Gamma\left(2\varepsilon+3(1-\varepsilon)\right)\le\Gamma(2)^\varepsilon\Gamma(3)^{1-\varepsilon}=
  2^{1-\varepsilon}$ and we obtain
  \begin{equation}\label{eq:Tt}
    \forall t>0 \colon \norm{\semigroup(t)}_{\mathcal L(X_\varepsilon, X_2)} 
    \leq 
    \sqrt{\frac{1}{\sqrt{2\pi}} \left(\frac{1}{2}\right) t^{-2+\varepsilon}} 
    = 
      \frac{1}{\sqrt[4]{8\pi}} \, t^{-1+\varepsilon/2}. 
  \end{equation}
  The solution $u$ admits the representation
  \begin{equation}\label{eq:repr}
      u(t) = \int_0^t \semigroup(\tau) g(t-\tau) \mathrm d\tau, \quad 0 \leq t \leq b,
  \end{equation}
  see~\eqref{eq:Duhamel}, 
  and for $t \in [0,b]$, it follows that
  \begin{align*}
      \norm{u(t)}_{X_2}   &\leq \int_0^t \norm{\semigroup(\tau) g(t-\tau)}_{X_2} \mathrm d\tau 
                          \leq \int_0^t \norm{\semigroup(\tau)}_{\mathcal L(X_\varepsilon,X_2)} \norm{g(t-\tau)}_{X_\varepsilon} \mathrm d\tau \\
                          &\leq \frac{1}{\sqrt[4]{8\pi}} C_g \int_0^t \tau^{-1+\varepsilon/2} \mathrm d\tau 
                          = \frac{1}{\sqrt[4]{8\pi}} C_g\, \frac{2}{\varepsilon}\, t^{\varepsilon/2} 
                          = \sqrt[4]{\frac{2}{\pi}}\, C_g\, \frac{1}{\varepsilon}\, t^{\varepsilon/2}.
  \end{align*}
  We estimate the three terms of $\normiii{u}_{H^{1/2}_{0,}((0,b);X_2)}$
  expressed as in~\eqref{Sob:NormTriple}.

  \noindent
  \textbf{First term:} From the previous bound for $\norm{u(t)}_{X_2}$, we derive
  \begin{equation*}
      \| u \|_{L^2((0,b);X_2)}^2 = \int_0^b \norm{u(t)}_{X_2}^2 \mathrm dt \leq \sqrt{\frac{2}{\pi}}\, C_g^2\, \frac{1}{\varepsilon^2} \int_0^b t^\varepsilon \mathrm  dt = \sqrt{\frac{2}{\pi}}\, C_g^2\, \frac{1}{\varepsilon^2(1+\varepsilon)}\, b^{1+\varepsilon}.
  \end{equation*}
    
  \noindent
  \textbf{Third term:} Similarly, we obtain
  \begin{equation*}
      \int_0^b \frac{\norm{u(t)}_{X_2}^2}{t} \mathrm dt \leq \sqrt{\frac{2}{\pi}}\, C_g^2\, \frac{1}{\varepsilon^2} \int_0^b t^{\varepsilon-1} \mathrm dt = \sqrt{\frac{2}{\pi}}\, C_g^2\, \frac{1}{\varepsilon^3}\, b^\varepsilon.
  \end{equation*}

  \noindent
  \textbf{Second term:} Recalling~\eqref{SlobodetskiiSemi}, we need to
estimate $\int_0^b \int_0^b \frac{\|u(s)-u(t)\|_{X_2}^2}{|s-t|^2} \mathrm ds \mathrm dt$.\\
  For $b \geq s \geq t \geq 0$, we have
  \begin{align*}
      &\norm{u(s)-u(t)}_{X_2} \stackrel{\eqref{eq:repr}}{=} \norm{ \int_0^s \semigroup(\tau) g(s-\tau) \mathrm d\tau - \int_0^t \semigroup(\tau) g(t-\tau) \mathrm d\tau }_{X_2}  \\
      &\qquad=  \norm{ \int_t^s \semigroup(\tau) g(s-\tau) \mathrm d\tau + \int_0^t \semigroup(\tau) g(s-\tau) \mathrm d\tau - \int_0^t \semigroup(\tau) g(t-\tau) \mathrm d\tau }_{X_2}  \\
      &\qquad\leq  \norm{ \int_t^s \semigroup(\tau) g(s-\tau) \mathrm d\tau }_{X_2} + \norm{ \int_0^t \semigroup(\tau) [g(s-\tau) - g(t-\tau)] \mathrm d\tau }_{X_2}  \\
      &\qquad\leq  \int_t^s \norm{\semigroup(\tau)}_{\mathcal L(X_\varepsilon,X_2)} \norm{g(s-\tau)}_{X_\varepsilon} \mathrm d\tau \\
      &\qquad\quad + \int_0^t \norm{ \semigroup(\tau) }_{\mathcal L(X_\varepsilon,X_2)} \norm{g(s-\tau) - g(t-\tau)}_{X_\varepsilon} \mathrm d\tau \\
      &\qquad\stackrel{\eqref{eq:Tt}}{\leq}  \frac{1}{\sqrt[4]{8\pi}} \left(\int_t^s \tau^{-1+\varepsilon/2} \norm{g(s-\tau)}_{X_\varepsilon} \mathrm d\tau  + \int_0^t  \tau^{-1+\varepsilon/2}  \norm{\int_{t-\tau}^{s-\tau} g'(r) dr}_{X_\varepsilon} \mathrm d\tau\right) \\
      &\qquad\stackrel{\eqref{eq:boundg}}{\leq}  \frac{1}{\sqrt[4]{8\pi}}\, C_g \,\frac{2}{\varepsilon}\, \left((s^{\varepsilon/2}-t^{\varepsilon/2}) + t^{\varepsilon/2}\, (s-t)\right).
  \end{align*}
  Analogously, for $b \geq t \geq s \geq 0$, the estimate
  \begin{equation*}
    \norm{u(s)-u(t)}_{X_2} \leq
  \frac{1}{\sqrt[4]{8\pi}}\, C_g \,\frac{2}{\varepsilon}\, \left((t^{\varepsilon/2}-s^{\varepsilon/2}) + s^{\varepsilon/2}\, (t-s)\right)
  \end{equation*}
  holds true. 
  We conclude that 
  \begin{align*}
      \int_0^b  \int_0^b &\frac{\|u(s)-u(t)\|_{X_2}^2}{|s-t|^2} \mathrm ds \mathrm dt \\
      &= \int_0^b  \int_0^t \frac{\|u(s)-u(t)\|_{X_2}^2}{|s-t|^2} \mathrm ds \mathrm dt + \int_0^b  \int_0^s \frac{\|u(s)-u(t)\|_{X_2}^2}{|s-t|^2} \mathrm dt \mathrm ds \\
      &\leq \frac{1}{\sqrt{8\pi}}\, C_g^2\, \frac{16}{\varepsilon^2} \int_0^b
        \int_0^s
        \left(\frac{(s^{\varepsilon/2}-t^{\varepsilon/2})^2}{(s-t)^2}+t^\varepsilon\right)
        \mathrm dt \mathrm ds\\
      &= \sqrt{\frac{32}{\pi}}\, \frac{1}{\varepsilon^2}\, C_g^2 \left(\int_0^b s^{-1+\varepsilon} \int_0^1 \frac{(1-r^{\varepsilon/2})^2}{(1-r)^2} \mathrm dr \mathrm ds  +  \frac{b^{\varepsilon + 2}}{(\varepsilon +1)(\varepsilon + 2)}\right) \\
      &\stackrel{\text{$\varepsilon/2\le 1$}}{\leq} \sqrt{\frac{32}{\pi}}\, \frac{1}{\varepsilon^2}\, C_g^2 \left(\int_0^b s^{-1+\varepsilon} \int_0^1 \frac{(1-r^{\varepsilon/2})^2}{(1-r^{\varepsilon/2})^2} \mathrm dr \mathrm ds  +  \frac{b^{\varepsilon + 2}}{(\varepsilon +1)(\varepsilon + 2)}\right) \\
      &= \sqrt{\frac{32}{\pi}}\, \frac{1}{\varepsilon^2}\, C_g^2 \left(\frac{b^\varepsilon}{\varepsilon}  +  \frac{b^{\varepsilon + 2}}{(\varepsilon +1)(\varepsilon + 2)}\right).
  \end{align*}

\noindent
  \textbf{Conclusion of the proof:}
  By combining the bounds of the three terms, we arrive at the \emph{a~priori} estimate
  \begin{align*}
  \normiii{u}_{H^{1/2}_{0,}((0,b);X_2)} \leq \sqrt[4]{\frac{2}{\pi}}\, \frac{1}{\varepsilon}\, b^{\varepsilon/2} 
  \left( \frac{b}{1+\varepsilon}   + \frac{3}{\varepsilon} + 
          \frac{4 b^2}{(\varepsilon +1)(\varepsilon + 2)}  
  \right)^{1/2} C_g\;,
  \end{align*}
  which gives the assertion.

\section{Proof of Lemma~\ref{lem:BoundGamma}} \label{sec:ProofLemBoundGamma}
This proof is a slight modification of the proof of \cite[Lemma~3.4]{DD20}. 
We use Stirling's inequalities
\be \label{Stirling}
  \forall x > 0 : \quad \sqrt{2 \pi} x^{x-1/2} \mathrm{e}^{-x} 
   \leq \Gamma(x) \leq \sqrt{2 \pi} x^{x-1/2} \mathrm{e}^{-x} \mathrm{e}^{\frac{1}{12x}}.
\ee
For $j \geq 1$, \eqref{Stirling} yields
\begin{multline*}
  \frac{\Gamma(\lfloor \mu j \rfloor-j+1)}{\Gamma(\lfloor \mu j \rfloor +j+1)} \leq \frac{\Gamma(\mu j-j+1)}{\Gamma(\mu j +j)} \\
  \leq \frac{\sqrt{2\pi} (\overbrace{\mu j-j+1}^{\leq \mu j \text{ as } j\geq 1})^{\overbrace{\mu j-j+1/2}^{\geq 0 \text{ as } \mu \geq 1}} \mathrm e^{-(\mu j-j+1)} \overbrace{\mathrm{e}^{1/(12( \mu j -j+1))}}^{\leq 2} }{ \sqrt{2\pi} (\underbrace{\mu j+j}_{\geq \mu j})^{\mu j + j -1/2} \mathrm e^{-(\mu j+j)} } \leq \frac{2 \mu j }{\mathrm e} \left( \frac{ \mathrm e }{\mu j}  \right)^{2j}
\end{multline*}
and
\begin{equation*}
  \Gamma(j+3)^2 = (\underbrace{j+2}_{\leq 3j})^2 (\underbrace{j+1}_{\leq 2j})^2 j^2 \Gamma(j)^2 \leq j^6 \cdot 72 \pi j^{2j - 1} \mathrm{e}^{-2j} \underbrace{\mathrm{e}^{\frac{1}{6j}}}_{< 2} \leq 144 \pi j^5 j^{2j} \mathrm{e}^{-2j}.
\end{equation*}
Thus, we have
\begin{equation*}
  \forall j \in \IN : \quad \alpha^{2j} \frac{\Gamma(\lfloor \mu j \rfloor-j+1)}{\Gamma(\lfloor \mu j \rfloor +j+1)} \Gamma(j+3)^2 \leq \frac{288 \pi \mu }{\mathrm e} j^6 \left( \frac{\alpha }{\mu}  \right)^{2j}.
\end{equation*}
Hence, we conclude that
\begin{multline*}
  \sum_{j=0}^m \alpha^{2j} \frac{\Gamma(\lfloor \mu j \rfloor-j+1)}{\Gamma(\lfloor \mu j \rfloor +j+1)} \Gamma(j+3)^2 = 4 + \sum_{j=1}^m \alpha^{2j} \frac{\Gamma(\lfloor \mu j \rfloor-j+1)}{\Gamma(\lfloor \mu j \rfloor +j+1)} \Gamma(j+3)^2 \\
  \leq 4 + \frac{288 \pi \mu }{\mathrm e} \sum_{j=1}^{\infty} j^6 \left( \frac{\alpha }{\mu}  \right)^{2j} < \infty,
\end{multline*}
since the ratio test gives
\begin{equation*}
  \lim_{j\to \infty} \frac{ (j+1)^6 \left( \frac{\alpha }{\mu}  \right)^{2(j+1)} }{ j^6 \left( \frac{\alpha }{\mu}  \right)^{2j} } = \left( \frac{\alpha }{\mu}  \right)^2 < 1,
\end{equation*}
i.e., the assertion is proven.

\end{document}